\let\oldmarginpar\marginpar
\renewcommand\marginpar[1]{\-\oldmarginpar[\raggedleft\footnotesize #1]%
{\raggedright\footnotesize #1}}
\theoremstyle{plain}
\newtheorem{thm}[equation]{Theorem}
\newtheorem{lem}[equation]{Lemma}
\newtheorem{lemma}[equation]{Lemma}
\newtheorem{prop}[equation]{Proposition}
\newtheorem{cor}[equation]{Corollary}
\theoremstyle{definition}
\newtheorem{defn}[equation]{Definiton}
\newtheorem{assumptions}[equation]{Assumption}
\theoremstyle{remark}
\newtheorem{remark}[equation]{Remark}
\numberwithin{equation}{section}
\newcommand{\R}{\mathbb{R}}
\newcommand{\N}{\mathbb{N}}
\newcommand{\Rn}{\mathbb{R}^n}
\def\osc{\operatornamewithlimits{osc}}
\def\essinf{\operatornamewithlimits{ess\,inf}}
\renewcommand{\phi}{\varphi}
\renewcommand{\rho}{\varrho}
\renewcommand{\epsilon}{\varepsilon}
\renewcommand{\vartheta}{\theta}
\def\le{\leqslant}
\def\leq{\leqslant}
\def\ge{\geqslant}
\def\geq{\geqslant}
\def\esssup{\operatornamewithlimits{ess\,sup}}
\def\osc{\operatornamewithlimits{osc}}
\def\osc{\qopname\relax o{osc}}
\def\spt{\qopname\relax o{spt}}
\def\px{{p(\cdot)}}
\def\phix{{\phi}}
\def\loc{{\rm loc}}
\newcommand{\ainc}[1]{\hyperref[defn:aInc]{{\normalfont(aInc){\ensuremath{_{#1}}}}}}
\newcommand{\adec}[1]{\hyperref[defn:aDec]{{\normalfont(aDec){\ensuremath{_{#1}}}}}}
\newcommand{\adeci}[1]{\hyperref[defn:aDeci]{{\normalfont(aDec){\ensuremath{_{#1}^\infty}}}}}
\newcommand{\azero}{\hyperref[defn:a0]{{\normalfont(A0)}}}
\newcommand{\aone}{\hyperref[defn:a1]{{\normalfont(A1)}}}
\newcommand{\aonen}{\hyperref[defn:a1n]{{\normalfont(A1-\ensuremath{n})}}}
\newcommand{\Phiw}{\Phi_\text{\rm w}}
\date{\today}
\definecolor{blau}{rgb}{0.1,0.0,0.9}
\newcounter{komcounter}
\numberwithin{komcounter}{section}
\begin{document}

\title[Hölder continuity of \texorpdfstring{$\omega$}{omega}-
minimizers]
{Hölder continuity of \texorpdfstring{$\omega$}{omega}-minimizers 
of functionals with generalized Orlicz growth}

% Information for first author

\author{Petteri Harjulehto}
 % Address of record for the research reported here
 \address{Petteri Harjulehto,
 Department of Mathematics and Statistics,
FI-20014 University of Turku, Finland}
% Current address
\email{\texttt{petteri.harjulehto@utu.fi}}

\author{Peter Hästö}
 % Address of record for the research reported here
 \address{Peter Hästö, Department of Mathematics and Statistics,
FI-20014 University of Turku, Finland, and Department of Mathematics,
FI-90014 University of Oulu, Finland}
% Current address
\email{\texttt{peter.hasto@oulu.fi}}

\author{Mikyoung Lee}
 % Address of record for the research reported here
 \address{Mikyoung Lee, Department of Mathematics, Pusan National University, Busan 46241, Republic of Korea}
% Current address
\email{\texttt{mikyounglee@pusan.ac.kr}}

% General info
\subjclass[2010]{35B65, 35J60, 35A15, 49J40, 46E35}

%\date{\today}

%\dedicatory{This paper is dedicated to our advisors.}

\keywords{generalized Orlicz space, Musielak--Orlicz space, 
variable exponent, double phase, non-standard growth, quasiminimizer, omega-minimizer, Harnack's inequality, Hölder continuity}

\begin{abstract}
We show local Hölder continuity of quasiminimizers of 
functionals with non-standard (Musielak--Orlicz) 
growth. Compared with previous results, we 
cover more general minimizing functionals and need fewer assumptions. 
We prove Harnack's inequality and a Morrey type estimate for quasiminimizers. 
Combining this with Ekeland's variational principle, we obtain local Hölder 
continuity for $\omega$-minimizers.
\end{abstract}

\maketitle

%%%%%%%%%%%%%%%%%%%%%%%%%%%%%%%%%%%%%%%%%%%%%%%%%%%%%%%%
%%%%%%%%%%%%%%%%%%%%%%%%%%%%%%%%%%%%%%%%%%%%%%%%%%%%%%%%
\section{Introduction}

Generalized Orlicz spaces have recently attracted increasing intensity (cf.\ 
Section~\ref{sect:spaces}). The results have also been applied to the study of 
differential equations with non-standard growth 
(e.g.\ \cite{ChlGZ18,GwiSZ18, HarHK17, HarHK16, HasO_pp18}). 
In \cite{HarHT17}, the first two authors and Toivanen gave the first proof 
of Harnack's inequality for solutions under generalized Orlicz growth. 
We start this paper giving a more sophisticated proof of this inequality, 
with better dependence of the constants on the structure of the equation. 
In contrast the the earlier result, this improved Harnack inequality can be 
applied to prove the H\"older continuity of $\omega$-minimzers, which is the 
second part of this paper.

In the fields of partial differential equations and the calculus of variations, 
there has been much research on non-standard growth problems 
(e.g.\ \cite{AM2,AM0,BO1,Mar89,Mar91}), 
such as the non-autonomous minimization problem
$$
\min_{v\in W^{1,1}} \int_\Omega F(x,\nabla v)\, dx
$$
where $F$ satisfies $(p,q)$-growth conditions, that is, 
$|\xi|^p - 1 \lesssim F(x,\xi)\lesssim |\xi|^q + 1, \ q>p$. 
Zhikov \cite{Zhi86, Zhi95} considered special cases as models of anisotropic 
materials and the so-called Lavrentiev phenomenon. In \cite{Zhi95}, he 
proposed model problems including
$$
F(x,\xi)\approx |\xi|^{p(x)},\ \ 1< \inf p\le \sup p <\infty,
$$
and
\begin{equation}\label{eq:doublephase}
F(x,\xi)\approx |\xi|^{p}+a(x)|\xi|^q,\ \ 1<p\leq q<\infty,\ \ a\geq 0.
\end{equation}
For the first, so-called \textit{variable exponent} case, the exponent of $|\xi|$ is a 
function of the $x$-variable which is usually assumed to be $\log$-H\"older continuous, and it 
describes 
various phenomena, for example electrorheological fluids \cite{Ruz00} and image 
restoration \cite{CheLR06,HarHLT13}, with growth continuously changing with 
respect to the position. 
The second, so-called \textit{double phase} case describes for instance composite 
materials or mixtures. Here, a discontinuous phase transition occurs on the 
border between constituent materials. In a series of papers, Baroni, Colombo 
and Mingione \cite{BarCM15, BarCM18, 
ColM15a, ColM15b, ColM16} have studied regularity properties of minimizers of 
these problems, see also \cite{ByuO17, ByuRS18, EleMM16a, EleMM16b, EleMM18, Ok17, ZhaR18}. 
Cupini, Pasarelli di Napoli and co-authors \cite{CloGHP_pp18, CupGGP18} 
have considered the variant of the double phase functional
\begin{equation}\label{eq:degen}
F(x,\xi)=(|\xi|-1)_+^p + a(x) (|\xi|-1)_+^q
\end{equation}
with $(s)_+:= \max\{s,0\}$, which is degenerate for small positive values of the 
gradient (see also \cite[Section~7.2]{HarH19} on how this functional fits into the generalized Orlicz framework). 
Furthermore, minimizers of borderline functionals like 
\[
F(x,\xi)=|\xi|^{p(x)}\log(e+|\xi|) \ \ \text{and}\ \ F(x,\xi)=|\xi|^p+a(x)|\xi|^p\log(e+|\xi|)
\]
have been recently studied, see for instance 
\cite{BarCM16,ByuO17,GiaP13,Ok16,Ok16b,Ok16c}.
We stress that \textit{all of these special cases are covered by the results
in this paper} (cf.\ \cite[Section~7.2]{HarH19}). In many cases the results of this paper 
are new even in the special cases.

The notion of an $\omega$-minimizer, sometimes called \textit{almost 
minimizer}, was introduced by Anzellotti \cite{Anz83}, and an analogous 
notion was originally given by Almgren \cite{Alm76} in the context of 
geometric measure theory. 
It was motivated by the fact that minimizers of constrained problems can turn 
out to be $\omega$-minimizers of unconstrained problems. 
For instance, minimizers of energy functionals with volume 
constraints or obstacles are $\omega$-minimizers, where the function 
$\omega$ is determined by the properties of the constraint \cite{Anz83, 
DuzGG00}. In this regard, the notion of an $\omega$-minimizer 
is useful and has been widely studied in the calculus of variations.
   
Regularity theory for minimizers has been extended to $\omega$-minimizers 
under suitable decay conditions on the function 
$\omega$ in for instance \cite{Anz83, DuzGG00,GobZ08,KriM05}, see also \cite{Min06} 
for a survey.
In particular, H\"older continuity of $\omega$-minimizers  was established by 
Dolcini--Esposito--Fusco \cite{DolEF96} in the standard $p$-growth case and 
later by Esposito--Mingione \cite{EspM99} in more general cases. Recently, it 
was also proved in double phase and Orlicz growth cases by Ok \cite{Ok17}.\footnote{This paper contains 
some problems in the proofs. With the assistance of Jihoon Ok, we have also managed improved the 
proofs to circumvent these problems.} 

We prove an extension of these results to the generalized Orlicz growth case. 
Our energy functional is given by 
\begin{equation}\label{mainfnal}
\mathcal{F}(u, \Omega) := \int_{\Omega} F(x, u, \nabla u) \,dx 
\end{equation}
where $ F: \Omega \times\R\times \Rn \to \R$ satisfies 
\begin{equation}\label{Hcon}
\nu\, \phi(x, |z|) \le F(x, t, z) \le N\, \big(\phi(x,|z|) + \Lambda \big) 
\end{equation}
for some $0<\nu \le N$ and $\Lambda \geq 0$. 
The exact definitions of the conditions in the following result 
are given in the next section; roughly,  \azero{}
restricts us to unweighted situations, \aone{} and \aonen{} are subtle continuity conditions 
and \ainc{} and \adeci{} exclude $L^1$- and $L^\infty$-type behavior, respectively.

\begin{thm}\label{mainthmV}
Let $\Omega\subset \Rn$ be a domain and $\phi\in\Phiw(\Omega)$ satisfy \azero{}, \ainc{} and \adeci{}. 
Let $u \in W^{1,\phi}_{loc}(\Omega)$ be an $\omega$-minimizer of $
\mathcal{F}$ and $(t, z)\to F(x, t, z)$ be continuous.
Assume that $\phi$ satisfies \aone{}, or that $u$ is bounded and $\phi$ 
satisfies \aonen{}. 
Then $u$ is locally H\"older continuous.
\end{thm}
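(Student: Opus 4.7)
The strategy is to reduce Hölder continuity of $\omega$-minimizers to that of quasiminimizers via Ekeland's variational principle, and then invoke the Harnack inequality and Morrey-type oscillation decay for quasiminimizers developed in the preceding sections. The target is an oscillation-decay estimate of the form
\[
\osc_{B_\rho} u \le C \Big(\tfrac{\rho}{r}\Big)^{\alpha} \osc_{B_r} u + C\, r^{\alpha},
\]
valid for all $B_r \Subset \Omega$ and $\rho \in (0,r]$; a standard Campanato-type iteration then yields local Hölder continuity of $u$. If only \aonen{} is assumed, $u$ is bounded by hypothesis, while under \aone{} local boundedness of $u$ follows from the Moser estimate built into the Harnack inequality already proved. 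In either case one may work with locally bounded $\omega$-minimizers.

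\textbf{Ekeland reduction.} On a fixed ball $B_r \Subset \Omega$, equip $u + W^{1,\phi}_0(B_r)$ with a metric of the form
\[
d(w_1,w_2) := \int_{B_r} \phi(x,|\nabla w_1 - \nabla w_2|)\,dx,
\]
and apply Ekeland's variational principle to $\mathcal{F}(\cdot,B_r)$. The $\omega$-minimality of $u$ produces $v \in u + W^{1,\phi}_0(B_r)$ which is a genuine quasiminimizer of the perturbed functional
\[
w \mapsto \mathcal{F}(w,B_r) + \lambda_r \int_{B_r}\phi(x,|\nabla w - \nabla u|)\,dx,
\]
with both $\lambda_r$ and $d(u,v)$ controlled by (a root of) $\omega(r)$. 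Since the perturbed integrand still satisfies \eqref{Hcon} with constants independent of $\lambda_r$, the Morrey-type decay for quasiminimizers applies to $v$ and delivers $\osc_{B_\rho} v \le C(\rho/r)^\alpha \osc_{B_r} v$.

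\textbf{Comparison and main difficulty.} Writing $\osc_{B_\rho} u \le \osc_{B_\rho} v + 2\|u-v\|_{L^\infty(B_\rho)}$ and inserting the decay for $v$ reduces the task to controlling $\|u-v\|_{L^\infty(B_\rho)}$ by $C r^{\alpha}$ (up to an absorbable multiple of $\osc_{B_r} u$). The main obstacle is exactly this conversion of the modular closeness $d(u,v) \lesssim \omega(r)$ into a scale-invariant sup bound of the correct order. In the generalized Orlicz framework this requires passing through the $\phi$-shifted Young function based at $u(x_0)$, together with Sobolev--Poincaré inequalities adapted to $\phi$ and careful use of \azero{} and \aone{} (or \aonen{} in the bounded case), in the same spirit as the preparatory lemmas for the Harnack inequality. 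The improved structural dependence of the Harnack constants established at the start of the paper is precisely what ensures that the error terms introduced by the Ekeland perturbation remain absorbable, so that the Campanato iteration closes. A secondary technicality is the $u$-dependence in $F(x,u,\nabla u)$, but this is handled by a standard freezing argument using the assumed continuity of $(t,z) \mapsto F(x,t,z)$, which introduces only an error that vanishes on small scales.
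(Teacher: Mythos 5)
Your high-level plan (Ekeland to produce a quasiminimizer close to $u$, then apply the quasiminimizer Morrey/Harnack theory, then a Campanato iteration) matches the paper's approach. However, several of the concrete steps as you describe them would not go through, and the actual execution differs from what you sketch.

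First, the metric. You propose $d(w_1,w_2)=\int_{B_r}\phi(x,|\nabla w_1-\nabla w_2|)\,dx$. This modular is not a metric: $\phi(x,\cdot)$ is not subadditive (for $p$-growth it is \emph{super}additive), so the triangle inequality fails, and Ekeland's principle cannot be applied. The paper instead uses $d(v_1,v_2)=C_r\int_{Q_r}|\nabla v_1-\nabla v_2|\,dx$, the $L^1$ distance of the gradients with a carefully chosen scaling $C_r=[\,\omega(r)\,|Q_r|\,(\psi^-_{Q_r})^{-1}(\Lambda)\,]^{-1}$. This scaling is what makes the resulting Ekeland perturbation absorbable via Young's inequality into the $\psi+\Lambda$ functional, yielding that $w$ is a (weak) quasiminimizer of $\int \psi(x,|\nabla\cdot|)+\Lambda$; it is not automatic that the perturbed functional ``still satisfies \eqref{Hcon}''. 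The perturbation is linear in $|\nabla w-\nabla v|$, not of $\phi$-growth, and the absorption argument is the substance of Lemma~\ref{lemcompV}.

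Second, the comparison. You propose $\osc_{B_\rho}u\le\osc_{B_\rho}v+2\|u-v\|_{L^\infty}$ and then aim to control $\|u-v\|_{L^\infty}$ via ``shifted Young functions'' and Sobolev--Poincar\'e. The only information Ekeland gives is the $L^1$ gradient closeness $\fint_{Q_r}|\nabla u-\nabla w|\,dx\lesssim\omega(r)(\psi_{Q_r}^-)^{-1}(\Lambda)$, and upgrading that to an $L^\infty$ bound with the right scaling and decay is exactly where such a plan stalls. The paper avoids this by never comparing in $L^\infty$: it combines the $L^1$ gradient closeness with the Morrey decay for $w$ (Theorem~\ref{ThmMorreyV}) to obtain a Morrey decay for $\int_{Q_\sigma}|\nabla u|\,dx$, then iterates and concludes Hölder continuity through the Campanato embedding of $\mathcal L^{1,n+\mu/2}$. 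Shifted Young functions play no role, and the $u$-dependence of $F$ requires no freezing: the structure condition \eqref{Hcon} is uniform in the $t$-slot, and the continuity of $(t,z)\mapsto F(x,t,z)$ is used only for the lower semicontinuity needed in Ekeland's principle.

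Third, two smaller inaccuracies: under \aonen{} the paper needs the notion of \emph{weak} quasiminimizer (with bound $\|u\|_\infty$) precisely because Ekeland alone does not guarantee boundedness of the comparison function; and the passage from \adeci{} to \adec{} is carried out by replacing $\phi$ with $\psi=\phi+t$, a step your sketch does not address but which is essential because $\omega$-minimality is not preserved when one adds $\Lambda$ to the integrand.
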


The proof of this result is based on the variational technique described in 
\cite{DolEF96, Giu03}. The key idea is to find a quasiminimizer $w \in u+ 
W_0^{1,\phi}(Q_r)$ of the functional 
\[
\int_{Q_r} \phi(x,|\nabla w|)+  \Lambda \,dx,
\] 
which is comparable to our original $\omega$-minimizer $u$ 
of $\mathcal{F}$, by applying Ekeland’s variational principle with estimates 
depending on the constant $\Lambda$. 
From Harnack's inequality (Theorem~\ref{thm:Harnack}), 
it can be proved that the gradient of the quasiminimizer $w$ satisfies 
Morrey-type decay estimates (Section~\ref{sect:morrey}). 
A challenge compared to the classical case is that the constant in 
Harnack's inequality depends on $\Lambda_u$ and hence on $u$. 
However, we show that the natural bound $\Lambda_u\le |Q_r|^{-1}$ is 
sufficient to control the constant. 
Therefore, using the Morrey-type decay estimates, we can derive similar decay estimates of 
$\nabla u$, which implies H\"older continuity of $u$ (Section~\ref{sect:omega}). 
A further challenge worth mentioning is that 
moving between $\omega$-minimizers of $\phi$ and $\phi+\Lambda$ is not possible, 
so for this case we need to work directly with the condition \adeci{}. 

For the case \aonen{} (with $u$ bounded) 
we need to consider an alternative notion of minimizer 
called \textit{weak quasiminimizer} (cf.\ Definition~\ref{defminimizer}),
since we cannot otherwise guarantee boundedness of the quasiminimizer $w$ discovered by 
the Ekeland variational principle. This technique is adapted from \cite{Ok17}. 

%%%%%%%%%%%%%%%%%%%%%%%%%%%%%%%%%%%%%%%%%%%%%%%%%%%%%%%%%%%%%%%%%%%%%%%%
%%%%%%%%%%%%%%%%%%%%%%%%%%%%%%%%%%%%%%%%%%%%%%%%%%%%%%%%%%%%%%%%%%%%%%%%
%%%%%%%%%%%%%%%%%%%%%%%%%%%%%%%%%%%%%%%%%%%%%%%%%%%%%%%%%%%%%%%%%%%%%%%%

\section{Generalized \texorpdfstring{$\Phi$}{Phi}-functions}\label{sect:phi-functions}

By $\Omega \subset \Rn$ we denote a bounded domain, i.e.\ an open and connected 
set. By $p':=\frac p {p-1}$ we denote the H\"older conjugate exponent 
of $p\in [1,\infty]$. The notation $f\lesssim g$ means that there exists a constant
$C>0$ such that $f\le C g$. The notation $f\approx g$ means that
$f\lesssim g\lesssim f$ whereas $f\simeq g$ means that 
$f(t/C)\le g(t)\le f(Ct)$ for some constant $C\ge 1$. 
By $c$ we denote a generic constant whose
value may change between appearances.
A function $f$ is \textit{almost increasing} if there
exists $L \ge 1$ such that $f(s) \le L f(t)$ for all $s \le t$
(more precisely, $L$-almost increasing).
\textit{Almost decreasing} is defined analogously.
By \textit{increasing} we mean that the inequality holds for $L=1$ 
(some call this non-decreasing), similarly for \textit{decreasing}. 

\begin{defn}\label{def:phi}
We say that $\phi: \Omega\times [0,\infty) \to [0,\infty]$ is a 
\textit{$\Phi$-prefunction} if the following hold:
\begin{itemize}
\item[(i)] For every $t \in [0,\infty)$ the function $x \mapsto \phi(x,t)$ is 
measurable.
\item[(ii)] For every $x \in \Omega$ the function $t \mapsto \phi(x,t)$ is 
increasing. 
\item[(iii)] $\lim\limits_{t \to 0^+} \phi(x,t) = \phi(x,0)=0$ and $
\lim\limits_{t \to\infty} \phi(x,t)= \infty$ for every $x \in \Omega$.
\end{itemize}
A $\Phi$-prefunction is a \textit{weak 
$\Phi$-function}, denoted by $\phi \in \Phiw(\Omega)$, if the following hold:
\begin{itemize}
\item[(iv)] The function $t \mapsto \frac{\phi(x,t)}{t}$ is $L$-almost 
increasing in $(0, \infty)$ for every $x\in \Omega$.
\item[(v)] The function $t \mapsto \phi(x,t)$ is 
left-continuous for every $x\in \Omega$. 
\end{itemize} 
\end{defn}

Since our weak $\Phi$-functions are not bijections, they are not 
strictly speaking invertible. However, 
by $\phi^{-1}(x, \cdot):[0,\infty)\to [0,\infty]$ we denote the 
\emph{left-inverse} of $\phi$: 
\[
\phi^{-1}(x, \tau) := \inf\{t \ge 0 : \phi(x, t)\ge \tau\}. 
\]
If $\phi$ is strictly increasing, then this is just the normal inverse function, 
but that is not a convenient assumption for us. 
Let $\phi \in \Phiw(\Omega)$. We say that $\phi$ satisfies 
\begin{itemize}
\item[(A0)]\label{defn:a0}
if there exists $\beta \in(0, 1]$ such that $\beta \le \phi^{-1}(x,1) \le 
\frac1{\beta}$ for a.e.\ $x \in \Omega$, 
or equivalently there exists $\beta \in(0,1]$ such that 
$\phi(x,\beta) \le 1\le \phi(x,\frac1\beta)$ for a.e.\ $x \in \Omega$
(see Corollary~3.7.4 in \cite{HarH19}).
\item[(A1)]\label{defn:a1}
if there exists $\beta\in (0,1)$ such that,
for every ball $B$ and a.e.\ $x,y\in B \cap \Omega$,
\[
\beta \phi^{-1}(x, t) \le \phi^{-1} (y, t) 
\quad\text{when}\quad 
t \in \bigg[1, \frac{1}{|B|}\bigg].
\]
\item[(A1-$n$)] \label{defn:a1n}
if there exists $\beta \in (0,1)$ such that, 
for every ball $B$ and a.e.\ $x,y\in B \cap \Omega$,
\[
\phi(x,\beta t) \le \phi(y,t)
\quad\text{when}\quad 
t \in \bigg[1, \frac{1}{|B|^{1/n}}\bigg].
\]
\item[(aInc)$_p$] \label{defn:aInc} if
% there exists $L \geq 1$ such that 
$t \mapsto \frac{\phi(x,t)}{t^{p}}$ is $L$-almost 
increasing in $(0,\infty)$ for some $L\ge 1$ and a.e.\ $x\in\Omega$.
\item[(aDec)$_q$] \label{defn:aDec}
if
% there exists $L \geq 1$ such that 
$t \mapsto \frac{\phi(x,t)}{t^{q}}$ is $L$-almost 
decreasing in $(0,\infty)$ for some $L\ge 1$ and a.e.\ $x\in\Omega$.
\item[(aDec)$_q^\infty$] \label{defn:aDeci}
if
% there exists $L \geq 1$ such that 
$t \mapsto \frac{\phi(x,t)+1}{t^{q}}$ is $L$-almost 
decreasing in $(0,\infty)$ for some $L\ge 1$ and a.e.\ $x\in\Omega$.
\end{itemize} 

Moreover we say that $\phi$ satisfies \ainc{}, \adec{} or \adeci{} if it satisfies 
\ainc{p}, \adec{q} or \adeci{q}, respectively, for some $p>1$ or $q<\infty$. The 
condition \adeci{q} intuitively means that $t \mapsto \frac{\phi(x,t)}{t^{q}}$ is 
almost increasing for $t>T$ for some constant $T>0$. 

If $\phi$ satisfies \adec{}, then 
\begin{equation}\label{eq:almostId}
\phi^{-1}(x, \phi(x, t))\approx \phi(x, \phi^{-1}(x, t)) \approx  t.
\end{equation}
%Without \adec{}, things are more complicated, but we do have for instance 
%the following results: $t \mapsto \phi^{-1}(x, t)$ is increasing 
%and left-continuous, $\phi^{-1}(x, \phi(x,t))\le t$ and 
%$\phi(x, \phi^{-1}(x, \tau)) \le \tau$.
%Furthermore, $t\le \phi^{-1}(x, \phi(x, t)+\epsilon)$ and $\tau\le \phi(x, \phi^{-1}(x, \tau)+\epsilon)$ for every $\epsilon>0$.  
The growth of the inverse is closely tied to that of the function: 
$\phi$ satisfies \ainc{p} or \adec{q} 
if and only if 
$\phi^{-1}$ satisfies \adec{\frac 1p} or \ainc{\frac 1q}, respectively. 
For the proofs of these facts, see Section~2.3 in \cite{HarH19}.

By \cite[Proposition~4.1.5]{HarH19}, \aone{} implies that there
exists $\beta\in (0,1)$ such that
\[
\phi(x,\beta t) \le \phi(y,t)
\quad\text{when}\quad
\phi(y,t)\in [1 , \tfrac 1{|B|}]
\]
for almost every $x,y\in B \cap \Omega$ and 
every ball $B$ with $|B|\le 1$. Furthermore, if $\phi\in \Phiw$, then 
$\phi(\cdot ,1)\approx 1$ implies \azero{}, and if $\phi$ satisfies \adec{}, then \azero{} and $\phi(\cdot ,1)\approx 1$  are equivalent. 
In addition, when \adec{} holds we can multiply by 
constants in the range: $[a,\frac b{|B|}]$, $a,b>0$. 

The next lemma shows how we can use a trick to upgrade \adeci{} to \adec{} 
while preserving many other properties. 

\begin{lem}\label{lem:phi+t} 
Let $\phi \in \Phiw(\Omega)$ and define $\psi(x,t):= \phi(x,t) +t$. Then $\psi \in \Phiw(\Omega)$. Moreover,
\begin{enumerate}
\item[(a)] if $\phi$ satisfies \azero{}, then $\phi \le \psi \lesssim \phi +1$ and $\psi$ satisfies \azero{}; 
\item[(b)] if $\phi$ satisfies \adeci{q} and \azero{}, then $\psi$ satisfies \adec{q};
\item[(c)] if $\phi$ satisfies \aone{}, then $\psi$ satisfies \aone{};
\item[(d)] if $\phi$ satisfies \aonen{}, then $\psi$ satisfies \aonen{}. 
\end{enumerate}
\end{lem}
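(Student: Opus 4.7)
The first assertion, that $\psi\in\Phiw(\Omega)$, is routine: measurability, monotonicity, and the limit conditions (i)--(iii) of Definition~\ref{def:phi} are preserved under addition of the identity, left-continuity (v) likewise, and the almost monotonicity of $\psi(x,t)/t=\phi(x,t)/t+1$ follows immediately from condition (iv) for $\phi$.

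For part (a) the bound $\phi\leq\psi$ is trivial. To prove $\psi\lesssim\phi+1$, I would combine \azero{} with the almost increase of $\phi(x,t)/t$: the inequality $\phi(x,1/\beta)\geq 1$ together with almost monotonicity of $\phi(x,t)/t$ yields $\phi(x,t)\geq (\beta/L)t$ for $t\geq 1/\beta$, so that $t\leq (L/\beta)(\phi(x,t)+1)$; for $t\leq 1/\beta$, the bound $t\leq 1/\beta\leq (1/\beta)(\phi(x,t)+1)$ is immediate. Adding $\phi$ gives the inequality. The \azero{} condition for $\psi$ then follows by the symmetric argument: $\phi(x,\beta)\leq 1$ and almost monotonicity of $\phi/t$ yield $\phi(x,t)\leq (L/\beta)t$ for $t\leq\beta$, so $\psi(x,t)\leq (1+L/\beta)t$, and choosing $\beta'$ small enough forces $\psi(x,\beta')\leq 1$; meanwhile $\psi(x,1/\beta')\geq 1/\beta'\geq 1$ is automatic.

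For (b), I would show that $(\phi(x,t)+t)/t^q$ is almost decreasing by a case analysis over $0<s\leq t$, having first observed that \adeci{q} together with the almost increase of $\phi/t$ forces $q\geq 1$ (so $t^{1-q}$ is itself nonincreasing). When $s,t\geq\beta$, the estimate $\phi(x,s)+1\leq (1/\beta)(\phi(x,s)+s)$ reduces \adec{q} for $\psi$ directly to \adeci{q} for $\phi$. When $s\leq t\leq\beta$, the almost monotonicity of $\phi/t$ with $\phi(x,\beta)\leq 1$ gives $\phi(x,t)\leq (L/\beta)t$, hence $\psi(x,t)/t^q\leq (1+L/\beta)/t^{q-1}\leq (1+L/\beta)/s^{q-1}\leq (1+L/\beta)\psi(x,s)/s^q$, using only $\psi(x,s)\geq s$. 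The mixed case $s<\beta\leq t$ combines both ideas: $\psi(x,t)/t^q$ is controlled above by a constant (apply \adeci{q} anchored at $\beta$ together with $\psi\lesssim\phi+1$), while $\psi(x,s)/s^q\geq 1/s^{q-1}\geq 1/\beta^{q-1}$ is bounded below. The main technical subtlety is that for small $s$ one cannot compare $\phi(x,s)+1$ with $\phi(x,s)+s$, so the linear lower bound $\psi(x,s)\geq s$ must carry the argument on its own; this is the only step that really uses that the added term is $t$ rather than some smaller power.

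Finally, for (c) and (d) the continuity conditions transfer cleanly. For (d), the function form of \aonen{} reads $\phi(x,\beta t)\leq\phi(y,t)$ for $t\in[1,|B|^{-1/n}]$, and since $\beta\leq 1$ this gives $\psi(x,\beta t)=\phi(x,\beta t)+\beta t\leq\phi(y,t)+t=\psi(y,t)$ on the same range. For (c), the strategy is to show $\phi^{-1}(x,\tau)\approx\psi^{-1}(x,\tau)$ on $[1,\infty)$ with structural constants. The inequality $\psi^{-1}\leq\phi^{-1}$ is immediate from $\phi\leq\psi$. In the other direction, the bound $t\leq(L/\beta)\phi(x,t)$ for $t\geq 1/\beta$ from (a) yields $\psi(x,t)\leq C\phi(x,t)$ on that range; inverting, together with the \adec{1} property of $\psi^{-1}$ (equivalent to $\psi/t$ being almost increasing, from $\Phiw$), gives $\phi^{-1}(x,\tau)\lesssim\psi^{-1}(x,\tau)$ for all $\tau$ above a structural threshold $M$. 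The remaining levels $\tau\in[1,M]$ are handled by \azero{} for both functions, which confines $\phi^{-1}(x,\tau)$ and $\psi^{-1}(x,\tau)$ to a fixed compact subinterval of $(0,\infty)$. Feeding this comparison into \aone{} for $\phi$ delivers \aone{} for $\psi$ with a smaller constant.
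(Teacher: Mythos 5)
Your proof of the membership $\psi\in\Phiw(\Omega)$ and of parts (a), (b), (d) matches the paper's argument essentially step for step, and it is correct. In (b) you go slightly beyond the paper by making explicit the observation that \adeci{q} together with \ainc{1} forces $q\ge 1$ (so that $t^{1-q}$ is nonincreasing on $(0,\beta]$); the paper uses this implicitly without comment when it writes ``$\psi(x,t)\approx t$, so \adec{q} is clear in this range.'' The three-case split $s,t\ge\beta$, $s\le t\le\beta$, $s<\beta\le t$ is the same, as is the substitution of $\psi\lesssim\phi+1$ and $\phi+1\lesssim\psi$ on $[\beta,\infty)$ from \azero{}.

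For part (c) you take a genuinely different, and strictly heavier, route. You prove $\phi^{-1}(x,\tau)\approx\psi^{-1}(x,\tau)$ on $[1,\infty)$ and feed this into \aone{} for $\phi$; but that comparison really does need \azero{} (to get $\phi^{-1}(x,\tau)\lesssim\tau$ for $\tau\ge 1$), whereas the statement of (c) assumes only \aone{}. The paper instead observes the purely structural identity
\[
\psi^{-1}(x,\tau)\approx\min\{\phi^{-1}(x,\tau),\tau\},
\]
valid for any weak $\Phi$-function since $\psi\approx\max\{\phi(x,\cdot),\,\mathrm{id}\}$ pointwise. Because the map $a\mapsto\min\{a,\tau\}$ is order-preserving, the \aone{}-inequality $\beta\phi^{-1}(x,\tau)\le\phi^{-1}(y,\tau)$ transfers immediately to $\psi^{-1}$ with no appeal to \azero{} or to any absorbing of small/large $\tau$. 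In the context of the paper the distinction is harmless (every application of the lemma is under \azero{}), but the min-identity is the cleaner and more general argument, avoids the threshold/compact-interval bookkeeping your sketch leaves a bit loose, and is worth internalizing: in the same spirit, it also gives the quickest route to $\psi^{-1}(x,\tau)\approx\min\{\phi^{-1}(x,\tau),\tau\}\lesssim\tau$, which you reprove by hand.
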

\begin{proof}
Checking the properties in Definition~\ref{def:phi}, 
we find that $\psi \in \Phiw(\Omega)$.

(a) The inequality $\phi\le \psi$ is immediate. 
Let $\phi$ satisfy \azero{} and assume first that $t >\frac{1}{\beta}$. Then we obtain by \azero{} and \ainc{1} that 
\[
\psi(x,t)=\phi(x, t) + t \le \phi(x, t) + \phi\big(x, \tfrac1\beta\big) t
\le  \phi(x, t) + \tfrac{L}{\beta} \phi(x,t) = \big(1+ \tfrac{L}{\beta} \big)\phi(x,t).
\]
If $t \le \frac1\beta$, then 
$\psi(x, t) \le \phi(x, t) +\frac1\beta \le (1+\frac1\beta) (\phi(x, t) +1)$. 
From the inequalities it follows that $\psi(x,\beta)\lesssim \phi(x,\beta)+1\le 2$ 
and $\psi(x,\frac1\beta)\ge 1$, and hence  \azero{} follows. 
 
(b) Let us then assume that $\phi$ satisfies \adeci{q} and \azero{}. 
If $t > s \ge \beta$, then by \adec{q}
\[
\begin{split}
\frac{\psi(x, t)}{t^q} 
&\lesssim
\frac{\phi(x,t)+1}{t^q}
\lesssim
\frac{\phi(x,s)+1}{s^q}
%\le \frac{2\phi(x, s)}{s^q}
\lesssim \frac{\psi(x, s)}{s^q}.
\end{split}
\]
Let then $0<t\le \beta$. By \ainc{1}  and \azero{}, 
$\psi(x,t)\approx t$, so \adec{q} is clear in this range. 
The case $s\le \beta \le t$ follows by combining the previous cases.

(c) From the definition of left-inverse we directly see that 
$\psi^{-1}(x,t) \approx \min\{\phi^{-1}(x,t), t\}$. 
Thus we obtain by \aone{} of $\phi$ for $t \in \big[1, \frac1{|B|}\big]$ that
\[
\psi^{-1}(x, t) \approx \min\{\phi^{-1}(x,t), t\} 
\lesssim \min\{\phi^{-1}(y,t), t\}
\approx \psi^{-1}(y, t) .
\]

(d) Let $t \in \big[1, \frac{1}{|B|^{1/n}}\big]$.  By 
\aonen{} of $\phi$ we obtain
 \[
 \psi (x, \beta t) = \phi (x, \beta t) + \beta t
 \le \phi (y, t) + t = \psi(y, t). \qedhere
 \]
 \end{proof}

The Krylov--Safonov lemma used in the proof of Harnack's inequality works 
only for cubes, whereas \aone{} and \aonen{}-conditions have been defined with 
balls. However, a given cube $Q$ can be covered by a finite number, depending only 
on $n$, of balls $B_i$ with $|B_i|=|Q|$, and so the \aone{} or \aonen{} inequalities 
can be obtained in $Q$ by considering a chain of balls.
%The next lemma shows how we can apply the continuity assumptions also 
%in cubes, at the expense of some constant. 
%
%\begin{lem}[Lemma~2.10 in \cite{HarHT17}]\label{lem:A1_kuutio}\label{lem:A1-n_kuutio}
%Let $\phi \in \Phiw (\Omega)$ satisfy \aone{} or \aonen{}. 
%There exists $\beta\in (0,1)$ such that, for every  cube $\sqrt nQ \subset \Omega$, 
%\[
%\phi^+_Q (\beta t) \le \phi^-_Q (t)
%\quad\text{when}\quad
%t \in [1, M],
%\]
%where $M:=(\phi_Q^-)^{-1}(\tfrac1{|\sqrt n Q|})$ for \aone{} and 
%$M:=\frac1{\diam Q}$ for \aonen{}.
%\end{lem}

When $\phi_B^-(t) \le \frac 1{|B|}$ we will often use that \azero{}, \aone{} and 
\adec{} imply
\[
\phi_B^+(t)\lesssim \phi_B^-(t) + 1.
\]
Let us here give the details. If $\phi_B^-(t) \in [1, \frac 1{|B|}]$, then the 
inequality holds (without the $+1$) by \aone{} and \adec{}. If  $\phi_B^-(t) \le 1$, 
we instead use $\phi_B^+(t)\le \phi_B^+(1)\lesssim 1$ by \azero{} and \adec{}. 
Using same arguments we obtain the corresponding estimate for \aonen{}. 
%The estimate can be done also in cubes with Lemma~\ref{lem:A1_kuutio}.  

%%%%%%%%%%%%%%%%%%%%%%%%%%%%%%%%%%%%%%%%%%%%%%%%%%%%%%%%%%%%%%%%%%%%%%%%%%%%%%
%%%%%%%%%%%%%%%%%%%%%%%%%%%%%%%%%%%%%%%%%%%%%%%%%%%%%%%%%%%%%%%%%%%%%%%%%%%%%%
%%%%%%%%%%%%%%%%%%%%%%%%%%%%%%%%%%%%%%%%%%%%%%%%%%%%%%%%%%%%%%%%%%%%%%%%%%%%%%

\section{Generalized Orlicz spaces}\label{sect:spaces}

Generalized Orlicz spaces, also called Musielak--Orlicz spaces, have been actively 
studied over a long time. The basic example of a generalized Orlicz space was introduced by 
Orlicz \cite{Orl31} in 1931, and a major synthesis is due to Musielak \cite{Mus83} in 1983. 
Recent monographs on generalized Orlicz spaces are due to 
Yang, Liang and Ky \cite{YanLK17}, Lang and Mendez \cite{LanM19}, and 
the first two authors \cite{HarH19} 
focusing on Hardy-type spaces, functional analysis, and 
harmonic analysis, respectively; see also the survey article \cite{Chl18}. 
Generalized Orlicz spaces include as a special case classical Orlicz spaces that are well-known and have been extensively 
studied, see, e.g., the monograph \cite{RaoR91} and references therein. 

From this observation, we can roughly understand generalized Orlicz spaces as 
variable versions of Orlicz spaces with respect to the space variable $x$. 
The special case of variable exponent spaces $L^{\px}$ has been studied 
intensively over the last 20 years \cite{CruF13, DieHHR11, RadR15}. 
The reason that variable exponent research thrived while 
little harmonic analysis was done in generalized Orlicz spaces was the belief that 
many classical results can be obtained in the former setting but not the latter. 
A spate of recent articles (e.g.\ \cite{AhmCGY18, CapCF18, CruH18, 
HarH17, Has15, Kar18, MaeOS17, MaeMOS13, OhnS18, RafS17, YanYZ14}) 
has proved this belief to be unfounded.

Throughout the paper we write $\phi_B^+ (t):=\sup_{x\in B} \phi(x,t)$ and 
$\phi_B^- (t):=\inf_{x\in B} \phi(x,t)$ and abbreviate 
$\phi^\pm := \phi_\Omega^\pm$. Especially 
$\phi_B^-$ will be used countless times, since it enables us to apply the 
following Jensen-type inequalities. 
The function $\phi_B^-$ need not to be left-continuous, see 
\cite[Example 4.3.3]{HarH19} and hence it is not necessary a weak $\Phi$-function. 
However since it satisfies \ainc{} it is equivalent with a convex $\Phi$-function 
(independent of $x$) by \cite[Lemma~2.2.1]{HarH19}. 
This is used in the next lemma, where $\phi$ is independent of $x$, e.g.\ $\phi_B^-$.
%Thus we obtain 
%for instance the following result by Lemmas~4.3.4 and 4.3.5 of \cite{HarH19}.
We denote by $L^0(\Omega)$ the set of measurable functions in $\Omega$. 
By $f_D$ and $\fint_D f\, dx$ we denote the integral average of $f$ over $D$. 

%\begin{lem}\label{lem:Jensen}
%Let $\phi\in \Phiw(\Omega)$ satisfy \ainc{p}, $p\ge 1$, $D\subset \Omega$ be measurable 
%with $|D| \in(0, \infty)$ and $f\in L^0(D)$. Then there exists $\beta>0$ such that 
%\[
%\phi^-_D \bigg(\beta\fint_D |f|\, dx\bigg)^\frac1p
%\le 
%\fint_D \phi^-_D (f)^\frac1p \, dx.
%\]
%\end{lem}

\begin{lemma}\label{lem:aJensen}
Let $\phi$ be a $\Phi$-prefunction which satisfies \ainc{p} and \adec{q}, 
$D\subset \Rn$ be measurable with $|D| \in(0, \infty)$ and $f\in L^0(D)$. 
Then 
\[
\bigg( \fint_D |f|^p \,dx \bigg)^\frac1p 
\lesssim 
\phi^{-1}\bigg( \fint_D \phi(|f|) \,dx \bigg)
\lesssim
\bigg( \fint_D |f|^q \,dx \bigg)^\frac1q. 
\]
\end{lemma}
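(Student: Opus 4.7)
The plan is to reduce both inequalities to (standard or reverse) Jensen applied to an equivalent convex or concave function obtained via a monomial change of variable. For the left inequality, I would set $\psi(s):=\phi(s^{1/p})$. Under the substitution $s=t^p$ the hypothesis \ainc{p} (that $t\mapsto \phi(t)/t^p$ is almost increasing) becomes almost-increase of $s\mapsto \psi(s)/s$, i.e.\ $\psi$ satisfies \ainc{1}. By \cite[Lemma~2.2.1]{HarH19}, invoked in the paragraph preceding the statement, $\psi$ is equivalent to a convex $\Phi$-function $\tilde\psi$, and the classical Jensen inequality for $\tilde\psi$ applied to $|f|^p$ gives
\[
\tilde\psi\bigg(\fint_D|f|^p\,dx\bigg)\le \fint_D \tilde\psi(|f|^p)\,dx.
\]
Undoing the equivalence $\tilde\psi\approx \psi$ and the substitution yields $\phi\big((\fint_D|f|^p\,dx)^{1/p}\big)\lesssim \fint_D \phi(|f|)\,dx$; applying $\phi^{-1}$ then finishes this direction.

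For the right inequality, I would perform the dual substitution $\chi(s):=\phi(s^{1/q})$. Under $s=t^q$, \adec{q} becomes almost-decrease of $s\mapsto \chi(s)/s$, so $\chi$ is almost concave. Passing to an equivalent genuinely concave $\Phi$-function (by the obvious analogue of \cite[Lemma~2.2.1]{HarH19}) and invoking the reverse Jensen inequality gives
\[
\chi\bigg(\fint_D|f|^q\,dx\bigg)\gtrsim \fint_D \chi(|f|^q)\,dx=\fint_D \phi(|f|)\,dx,
\]
which, after applying $\phi^{-1}$, delivers the second estimate.

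The main technical point in both directions is the application of the left-inverse $\phi^{-1}$ to an inequality of the form $\phi(a)\lesssim b$: since $\phi$ is neither strictly increasing nor bijective, one cannot simply conclude $a\le \phi^{-1}(b)$. Instead, I would rely on two facts already recalled in the excerpt: the almost-identity $\phi^{-1}(\phi(t))\approx t$ from \eqref{eq:almostId} (valid thanks to \adec{}), and the equivalence between \ainc{p} of $\phi$ and \adec{1/p} of $\phi^{-1}$ (and symmetrically between \adec{q} of $\phi$ and \ainc{1/q} of $\phi^{-1}$). Together these let one absorb multiplicative constants through $\phi^{-1}$ at the cost of a harmless power factor, yielding $a\lesssim \phi^{-1}(b)$ from $\phi(a)\lesssim b$ and so completing both estimates. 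A secondary but minor subtlety is justifying the concave analogue of Lemma~2.2.1 for the right inequality, which follows by the same construction of an integral minorant.
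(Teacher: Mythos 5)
Your treatment of the first inequality matches the paper's line for line: substitute $\psi(t):=\phi(t^{1/p})$, note $\psi$ satisfies \ainc{1}, convexify via \cite[Lemma~2.2.1]{HarH19}, apply Jensen, and undo via \eqref{eq:almostId}. Your remarks about pushing $\phi^{-1}$ through an inequality of the form $\phi(a)\lesssim b$ via \eqref{eq:almostId} and the \ainc{}/\adec{} duality are also exactly the mechanism the paper uses.

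For the second inequality you take a genuinely different route. You concavify $\chi(s):=\phi(s^{1/q})$ and apply ``reverse'' (i.e.\ concave) Jensen, whereas the paper instead notes that \adec{q} of $\phi$ forces \ainc{1} of $(\phi^{-1})^{q}$, so the \emph{same} Lemma~2.2.1 produces a convex function $\xi\simeq(\phi^{-1})^q$, and ordinary (convex) Jensen applied to $\xi$ at $\fint_D\phi(|f|)\,dx$ gives the estimate directly. The paper's switch to the inverse buys two things: it avoids invoking a concave analogue of Lemma~2.2.1 that is not actually stated in \cite{HarH19} (you call it an ``obvious analogue,'' but it requires its own argument, e.g.\ via a least concave majorant and the almost-decreasing hypothesis), and it keeps the argument formally parallel to the first half, reusing a single equivalence lemma. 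Your version does work, but to be self-contained you would need to state and prove the concavification lemma; as written that step is a gap, even if an easily repairable one.
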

\begin{proof}
Let $\psi(t):= \phi(t^{1/p})$. Then $\psi$ satisfies \ainc{1} and 
so there exists a convex $\xi\in \Phiw$ with $\psi\simeq \xi$ with constant 
$\beta$ \cite[Lemma~2.2.1]{HarH19}. 
Since $\xi$ is 
convex, Jensen's inequality implies that 
\[
\phi \bigg( \beta^\frac2p \Big( \fint_D |f|^p \,dx \Big)^\frac1p  \bigg)
\le 
\xi\bigg( \beta \fint_D |f|^p \,dx \bigg)
\le 
\fint_D \xi(\beta |f|^p) \,dx
\le 
\fint_D \phi(|f|) \,dx. 
\]
Note that this inequality does not require \adec{}. 
The first inequality of the claim follows from this by \eqref{eq:almostId}. 

We know that $\phi^{-1}$ is increasing \cite[Lemma~2.3.9]{HarH19} and 
thus so is  $(\phi^{-1})^{q}$. Since $\phi$ satisfies \adec{q},
$\phi^{-1}$ satisfies \ainc{1/q} by \cite[Proposition~2.3.7]{HarH19} 
and $(\phi^{-1})^{q}$ satisfies \ainc{1}. Thus $(\phi^{-1})^{q}$ is a 
$\Phi$-prefunction. Hence by \cite[Lemma~2.2.1]{HarH19} there exists a convex
$\xi \in \Phiw$ such that $\xi \simeq (\phi^{-1})^{q}$. 
%This yields that 
%$\xi^{-1}(t) \approx \phi(t^{1/q})$. Moreover since $\xi$ is convex, $\xi^{-1}$ is 
%concave. 
We obtain by Jensen's inequality
\begin{align*}
\phi^{-1}\bigg(\fint_D \phi(|f|) \, dx \bigg) 
&\approx \xi\bigg( \fint_D \phi(|f|)\, dx\bigg)^\frac1q
\le \bigg( \fint_D \xi(\phi(|f|))\, dx\bigg)^\frac1q 
\approx \bigg( \fint_D |f|^q\, dx\bigg)^\frac1q . \qedhere
\end{align*}
\end{proof}

Let $\phi \in \Phiw(\Omega)$. The \emph{generalized Orlicz space} (also known 
as the Musielak--Orlicz space) is defined as the 
set 
\[
L^{\phi}(\Omega) : = \Big\{ f \in L^0(\Omega) : \lim_{\lambda\to 0^+} 
\varrho_{\phi}(\lambda f) =0 \Big\}
\]
equipped with the (Luxemburg) norm
$$ \| f \|_{L^{\phi}(\Omega)} := \inf \Big\{ \lambda >0 : \varrho_{\phi} 
\Big( \frac{ f}{\lambda}\Big) \le 1\Big\},$$
where $ \varrho_{\phi}(f)$ is the \textit{modular} of $f \in L^0(\Omega)$ defined by 
\[
\varrho_{\phi}(f):=\int_{\Omega} \phi(x, |f(x)|)\,dx. 
\]

In many places, we make the following set of assumptions. However, this 
will be explicitly specified, as some results work also under fewer assumptions. 
Furthermore, all \textit{constants in our estimates depend only on the parameters 
in the assumptions and the dimension $n$}, unless something else is explicitly 
states. Specifically, these parameters are 
the constants $\beta$ and $L$, the exponents $p$ and $q$, 
the minimizing parameters $Q$ and $\omega$ (Definition~\ref{defminimizer})
and the structure constants $\nu$ and $N$ (from \eqref{Hcon}). 
However, the dependence on $\Lambda$ and the size of the cube $r$ will be made 
explicit, since we will need the cases $\Lambda\to \infty$ and $r\to 0$. 

\begin{assumptions}\label{ass:std}
The function $\phi\in \Phiw(Q_r)$ satisfies \ainc{p}, \adec{q}, \azero{} and 
one of the following holds for the function $u\in W^{1,\phi}(Q_r)$
\begin{enumerate}
\item 
$\phi$ satisfies \aone{} and $\rho_\phi(\nabla u)\le 1$; or
\item
$\phi$ satisfies \aonen{} and $u\in L^\infty(\Omega)$.
%\item$\phi$ satisfies (A1-$\frac n{1-\gamma}$) and  $u\in C^\gamma(\Omega)$. (INCLUDED?)
\end{enumerate}
\end{assumptions}

In the second case of the assumption, constants depend also on $\|u\|_\infty$. 
Note that the assumptions could be more symmetrical by assuming 
$\rho_\phi(\nabla u)< \infty$ in (1), in which case the constants would 
depend on $\rho_\phi(\nabla u)$, or, alternatively, $\|u\|_\infty\le 1$ in (2). 
However, it seems that the current versions are more natural to use. 

\bigskip

A function $u \in L^{\phi}(\Omega)$ belongs to the \emph{Orlicz--Sobolev space 
$W^{1,\phi}(\Omega)$} if its weak partial derivatives $\partial_1 u, \dots 
\partial_n u$ exist and belong to $L^{\phi}(\Omega)$. Furthermore, 
$W^{1,\phi}_0 (\Omega)$ is defined as the closure of $C^{\infty}_0(\Omega)$ in 
$W^{1,\phi}(\Omega)$. 

We will need the Sobolev--Poincar\'e inequality numerous times in this 
article, with either zero boundary values or with 
average zero. For the calculus of variations, inequalities in 
modular form, with an error term, are more useful than inequalities 
concerning norms (such as the ones in \cite{HarH17}). Furthermore, 
it is useful to have a constant exponent improvement $s>1$ in the 
integrability regardless of growth. 
Note that the exponent $s$ can be on the right-hand side or on the 
left-hand side, see Proposition~6.3.12 and Corollary 6.3.15  of 
\cite{HarH19}. In this paper we need the following versions.

\begin{thm}[Sobolev--Poincar\'e inequality] \label{thm:SP}
Let $B_r \subset \Rn$ be a ball or a cube with diameter $2r$. 
Let $\phi \in \Phiw(B_r)$ satisfy Assumption~\ref{ass:std}. 
For $1 \le s < \frac{n}{n-1}$, 
%there exists $\beta> 0$ such that 
\begin{equation}\label{SPineq2}
\bigg(\fint_{B_r} \phi \bigg(x, \frac{|u|}r\bigg)^s\,dx 
\bigg)^{\frac1s}
\lesssim
\fint_{B_r} \phi (x, |\nabla u|) \,dx  + \frac{|\{\nabla u \neq 0\}\cap {B_r}|}
{|{B_r}|}
\end{equation}
for any $u \in W^{1,1}_0({B_r})$. 
If additionally $1\le s\le p$, then 
\begin{equation}\label{SPineq}
\fint_{B_r} \phi \bigg(x, \frac{|u-u_{B_r}|}{r}\bigg)\,dx 
\lesssim
\bigg(\fint_{B_r} \phi (x, |\nabla u|)^{\frac1s} \,dx \bigg)^{s} + 1
\end{equation}
for any $u \in W^{1,1}({B_r})$; in the case \aone{}, we need that 
$\| \nabla u \|_{\phi^{1/s}}\le M$, and the implicit constant depends on 
$M$.
The average $u_{B_r}$ can be replaced by $u_B$ for some ball or cube $B\subset B_r$ 
with $|B| > \mu |B_r|$, in which case the constant depends also on $\mu$.  
\end{thm}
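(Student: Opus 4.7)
The plan is to reduce both inequalities to their constant-exponent (Orlicz) counterparts for the auxiliary $\Phi$-function $\phi_B^-$, and then to transfer pointwise estimates from $\phi_B^-(\cdot)$ back to $\phi(x,\cdot)$ using the \aone{} or \aonen{} condition, at the cost of the error terms appearing on the right-hand sides. The Orlicz-case inequalities themselves reduce, via Lemma~\ref{lem:aJensen}, to the classical $L^1$-Sobolev embedding and the classical Poincar\'e inequality. Conceptually, we repackage \cite[Proposition~6.3.12 and Corollary~6.3.15]{HarH19} into the precise modular form required in our applications.

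For \eqref{SPineq2}, let $u\in W^{1,1}_0(B_r)$. I would first prove the analogous estimate for the constant-exponent function $\phi_B^-$,
\[
\bigg(\fint_{B_r} \phi_B^-(|u|/r)^s\,dx\bigg)^{1/s} \lesssim \fint_{B_r} \phi_B^-(|\nabla u|)\,dx,
\]
which is the Orlicz-case Sobolev--Poincar\'e inequality with improvement $s$; this is available because $s<n/(n-1)$ strictly (allowing a Sobolev/Hölder adjustment) and $\phi_B^-$ satisfies \ainc{p} with $p>1$, so Lemma~\ref{lem:aJensen} applies. Next invoke the pointwise estimate $\phi(x,t)\lesssim \phi_B^-(t)+1$ valid for $\phi_B^-(t)\le 1/|B_r|$, recorded in the paragraph just before Lemma~\ref{lem:phi+t}. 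The right-hand side is then dominated by $\fint_{B_r}\phi(x,|\nabla u|)$ since $\phi_B^-\le \phi(x,\cdot)$; on the left, the $+1$ contribution together with the region where the threshold condition fails jointly produce the additive error $|\{\nabla u\neq 0\}\cap B_r|/|B_r|$, via a Chebyshev bound on the bad set combined with a classical Sobolev estimate. The \aonen{} case is handled identically with threshold $|B_r|^{-1/n}$ in place of $1/|B_r|$; the hypothesis $u\in L^\infty$ (after rescaling by $\|u\|_\infty$) ensures that $|u|/r$ remains in the admissible range.

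For \eqref{SPineq} the argument is parallel, with the classical Poincar\'e inequality applied to $u-u_{B_r}$ in place of the Sobolev embedding. The exponent improvement $s\le p$ on the right is obtained by applying Lemma~\ref{lem:aJensen} to $t\mapsto \phi_B^-(t)^{1/s}$: since $s\le p$, this still satisfies an \ainc{} condition with exponent $p/s\ge 1$, which produces the form $\big(\fint \phi_B^-(|\nabla u|)^{1/s}\big)^s$ on the right. The additive $+1$ absorbs the transition error from $\phi_B^-\to \phi(x,\cdot)$ in the range $\phi_B^-(\cdot)\le 1$. In the \aone{} case, the auxiliary bound $\|\nabla u\|_{\phi^{1/s}}\le M$ is imposed precisely so that the \aone{} transfer is applicable to the function $x\mapsto \phi(x,|\nabla u|)^{1/s}$ on the right; the implicit constant then depends on $M$. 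Replacing $u_{B_r}$ by $u_B$ for a sub-cube $B\subset B_r$ with $|B|>\mu|B_r|$ costs only a factor depending on $\mu$, via the standard estimate $|u_B-u_{B_r}|\le \mu^{-1}\fint_{B_r}|u-u_{B_r}|$.

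The main obstacle is the bookkeeping around the threshold where the \aone{}/\aonen{} transfer ceases to be available: the contribution of the set $\{x:\phi_B^-(|u(x)|/r)>1/|B_r|\}$ (resp.\ $|B_r|^{-1/n}$) must be absorbed either into the measure term in \eqref{SPineq2} or into the additive $1$ in \eqref{SPineq}. A secondary subtlety is that the exponent improvement $s>1$ in \eqref{SPineq2} requires the strict inequality $s<n/(n-1)$ (rather than $s\le n/(n-1)$) in order to leave margin for the transfer step; no such restriction arises in \eqref{SPineq}, where the improvement is moved to the right-hand side via the factorization trick with $\phi^{1/s}$.
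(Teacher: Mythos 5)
Your plan is close in spirit to the paper's on one side and diverges significantly on the other, and the divergent side has a genuine gap.

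What the paper actually does: for the \aone{} case it does \emph{not} re-derive the inequalities at all; it cites them directly from Proposition~6.3.12 and Corollary~6.3.15 of \cite{HarH19}. The only part of the proof given in this paper is the \aonen{} case, which is handled by a reduction essentially matching yours — but with an important wrinkle. The paper takes the $x$-independent function $\phi^- := \phi_{B_r}^-$, replaces it by an equivalent convex $\xi\in\Phiw$ via \cite[Lemma~2.2.1]{HarH19}, and then applies the \emph{already-cited} \aone{}-case inequality to $\xi$. Because $\xi$ does not depend on $x$, it satisfies \aone{} for all $t$, not merely $t\in[1,\frac1{|B|}]$, so no modular bound on $\nabla u$ is needed at this step. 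The left-hand-side transfer $\phi(x,\cdot)\lesssim\phi^-(\beta\,\cdot)+1$ is then justified by \aonen{}, \azero{} and \adec{}, where boundedness of $u$ keeps the argument $\frac{|u-u_{B_r}|}{r}$ within the admissible range $[0,\frac{1}{|B_r|^{1/n}}]$ up to a constant depending on $\|u\|_\infty$. So for \aonen{}, your reduction idea is right, but the paper derives the base case by invoking the \aone{} result rather than by a fresh Jensen-plus-classical-Sobolev argument.

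The genuine gap is in your attempt to also re-derive the \aone{} case from the $\phi_B^-$ inequality. You correctly flag "the bookkeeping around the threshold" as the main obstacle, but your proposed fix does not close it. In the \aone{} case the transfer $\phi(x,t)\lesssim\phi_B^-(t)+1$ is valid only when $\phi_B^-(t)\le\frac1{|B_r|}$, and at $t=\frac{|u(x)|}{r}$ this is a condition on $|u|$, whereas the standing hypothesis $\rho_\phi(\nabla u)\le 1$ controls $|\nabla u|$. The Chebyshev estimate you sketch, $|\{\phi_B^-(|u|/r)>1/|B_r|\}|\le|B_r|\fint\phi_B^-(|u|/r)$, when combined with the constant-exponent Sobolev--Poincar\'e inequality and $\rho_\phi(\nabla u)\le 1$, only yields $|\{\phi_B^-(|u|/r)>1/|B_r|\}|\lesssim|B_r|$, which is vacuous and does not produce the error term $|\{\nabla u\neq 0\}\cap B_r|/|B_r|$. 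That term is already intrinsic to the correct formulation of the Orlicz Sobolev inequality with zero boundary values and arises from the structure of the pointwise Riesz-potential estimate $|u(x)|\lesssim I_1(|\nabla u|\chi_{B_r})(x)$, not from a bad-set decomposition after the fact. Relatedly, your claim that the constant-exponent base case "is available because $s<n/(n-1)$ ... and Lemma~\ref{lem:aJensen} applies" is not a proof: Lemma~\ref{lem:aJensen} is only a Jensen-type comparison between the modular average and $L^p$/$L^q$ averages and does not by itself produce a Sobolev gain; you still need the $W^{1,1}_0\hookrightarrow L^{n'}$ embedding and the mechanism that yields the measure correction, which is precisely the nontrivial content of \cite[Proposition~6.3.12]{HarH19} that the paper chooses to quote rather than reprove. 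In short: follow the paper in citing the \aone{} case; your reduction is sound for the \aonen{} case once you replace the Jensen-based base case by the cited \aone{} inequality applied to $\phi_{B_r}^-$.
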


The case \aone{} is covered by the Sobolev--Poincar\'e inequality in 
Proposition~6.3.12 and Corollary 6.3.15  of \cite{HarH19}, whereas 
the case of \aonen{} is new. 

\begin{proof}
We consider only bounded $u$ and \aonen{}. 
By  \cite[Lemma~2.2.1]{HarH19} there exists a convex
$\xi \in \Phiw$ such that $\xi \simeq \phi^-$. We apply \eqref{SPineq} 
to $\xi$, which satisfies \aone{} since it is independent of $x$:
\begin{align*}
\fint_{B_r} \phi^- \bigg(\frac{|u-u_{B_r}|}{r}\bigg)\,dx
&\lesssim
\fint_{B_r} \xi\bigg(\frac{|u-u_{B_r}|}{r}\bigg)\,dx \\
&\lesssim
\bigg(\fint_{B_r} \xi(|\nabla u|)^{\frac1s} \,dx \bigg)^{s} 
\lesssim
\bigg(\fint_{B_r} \phi^- (|\nabla u|)^{\frac1s} \,dx \bigg)^{s} \\
&\le
\bigg(\fint_{B_r} \phi (x, |\nabla u|)^{\frac1s} \,dx \bigg)^{s}.
\end{align*}
Furthermore, in this case the inequality $\| \nabla u \|_{\phi} <1$ is not 
needed, since \aone{} holds not only in $[1,\frac1{|B|}]$ but in $[0,\infty)$. 
%Then we use \adec{} and \ainc{} to move the constant $c$ from the right hand 
%side to the left hand side.
On the left-hand side we use \aonen{}, \azero{} and \adec{} to estimate 
\[
\phi \bigg(x, \frac{|u-u_{B_r}|}{r}\bigg)
\lesssim
\phi^- \bigg(\beta\, \frac{|u-u_{B_r}|}{r}\bigg)+1,
\]
which concludes the proof in this case.  Note that in this case the constant depends on $\|u\|_\infty$. The other inequality can be proved 
similarly from \eqref{SPineq2}.
\end{proof}

%%%%%%%%%%%%%%%%%%%%%%%%%%%%%%%%%%%%%%%%%%%%%%%%%%%%%%%%%%%%%%%%%%%%%%%%%%%%%%
%%%%%%%%%%%%%%%%%%%%%%%%%%%%%%%%%%%%%%%%%%%%%%%%%%%%%%%%%%%%%%%%%%%%%%%%%%%%%%
%%%%%%%%%%%%%%%%%%%%%%%%%%%%%%%%%%%%%%%%%%%%%%%%%%%%%%%%%%%%%%%%%%%%%%%%%%%%%%

\section{Quasiminimizers}\label{sect:quasiminimizers}

In a paper with Toivanen \cite{HarHT17}, the first two authors recently obtained the 
first results on regularity of quasiminimizers in the generalized Orlicz growth case. 
We showed a Harnack inequality and local H\"older continuity under assumptions 
\azero{}, \aone{}, \aonen{}, \ainc{} and \adec{}. 
The first aim of this paper is to improve and extend these results 
in several ways. 

%To understand the assumptions better we can consider the double phase case 
%\eqref{eq:doublephase}. Now 
%the continuity assumptions can be interpreted as follows when $a\in C^{0,\alpha}$:
%\[
%\text{\aone{}} \Leftrightarrow q-p \le \tfrac pn \alpha
%\quad\text{and}\quad 
%\text{\aonen{}} \Leftrightarrow q-p \le  \alpha.
%\]
%In the paper \cite{BarCM18}, Baroni--Colombo--Mingione showed H\"older 
%continuity of minimizers $u$ with two different assumptions: 
%$q-p \le \tfrac pn \alpha$ or $q-p \le  \alpha$ and $u$ bounded. 
%Furthermore, if the inequalities are violated, then there exist counter-examples 
%to show that the minimizer need not be continuous. 
The first main contribution 
of the current paper is to extend \cite{BarCM18} to the generalized Orlicz setting, 
and prove H\"older continuity assuming either \aone{} or \aonen{} and bounded $u$; 
In our earlier generalized Orlicz case result \cite{HarHT17}, 
we needed to assume both \aone{} and \aonen{}.
In addition, we here extend our previous results from \cite{HarHT17} in two ways.
Of greater importance is the inclusion of $+ \Lambda$ on the right-hand side: 
it allows us to move between \adeci{} and \adec{}  
and is crucial for applying quasiminimizer-results to prove regularity 
of $\omega$-minimizers. The \adeci{} assumption is a growth condition 
for large values of the gradient, a necessary change to handle \eqref{eq:degen} 
which does not satisfy a growth condition at the origin. 
A minor extension is that we allow $F$ to depend on $u$ and $\nabla u$, whereas 
the previous paper only allowed dependence on $|\nabla u|$. 

For quasiminimizers, our main result is the following Harnack inequality, 
which implies local H\"older continuity by well-known arguments. 
Note that the \aone{} and \aonen{} assumptions are essentially sharp, in view of the 
examples from the double phase case, cf.\ \cite{BalD_pp}. 

\begin{thm}[Harnack's inequality]\label{thm:Harnack}
Let $\Omega\subset \Rn $ be a domain and $\phi\in\Phiw(\Omega)$ satisfy  \azero{}, \ainc{} and \adeci{}. 
Let $u\in W^{1,\phix}_\loc(\Omega)$ be a non-negative local quasiminimizer of $\mathcal F$. 
Assume that $\phi$ satisfies \aone{}, or that $u$ is bounded and $\phi$ 
satisfies \aonen{}. 
If $Q_{2r} \subset \Omega$, then 
\[
\esssup_{x \in Q_r} u(x) \lesssim
\essinf_{x \in Q_r } u(x) +  (\phi^-_{Q_r})^{-1}(\Lambda+1)\,r
\]
provided $(\Lambda+1) |Q_{2r}| \le 1$ and $\int_{Q_{2r}} \phi(x, |\nabla u|) \, dx \le 1$. 
The implicit constant  depends only on the parameters from the assumptions,
the dimension $n$, and, in the case \aonen{}, on $\|u\|_\infty$; 
it is independent of $r$ and $\Lambda$. 
\end{thm}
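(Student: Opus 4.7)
The plan is to build Harnack's inequality from a supremum estimate (obtained by Moser iteration) and a weak Harnack estimate from below (obtained via the Krylov--Safonov covering lemma), joined by a translation trick that absorbs the $\Lambda$-inhomogeneity. The crucial observation is that $c := (\phi_{Q_r}^-)^{-1}(\Lambda + 1)\, r$ is precisely the oscillation scale forced by the right-hand side of \eqref{Hcon}: since $\phi_{Q_r}^-(c/r) = \Lambda + 1$, working with $\tilde u := u + c$ effectively absorbs $\Lambda$ into the natural scaling of $\phi$.

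First I would derive a Caccioppoli estimate for $(u-k)_\pm$. Testing the quasiminimizing property against $u \mp \eta^q (u-k)_\pm$ for a cutoff $\eta$ supported in a cube $Q_R \subset Q_{2r}$ and exploiting \eqref{Hcon}, \adec{q} (applied to $\phi + 1$ via Lemma~\ref{lem:phi+t}), and Young's inequality yields
\[
\int_{Q_R} \phi(x, |\nabla (u-k)_\pm|)\eta^q\, dx \lesssim \int_{Q_R} \phi(x, (u-k)_\pm |\nabla \eta|)\, dx + \Lambda\, |\{(u-k)_\pm > 0\} \cap Q_R|.
\]
Next, applying Sobolev--Poincaré (Theorem~\ref{thm:SP}) to the right-hand side gives a reverse-Hölder-type inequality with exponent gain $s > 1$; iterating in the Moser fashion on cubes $r_j \searrow r/2$ with levels $k_j \nearrow k_\infty$ produces
\[
\esssup_{Q_{r/2}} u \lesssim \bigg(\fint_{Q_r} u^{s_0}\, dx\bigg)^{1/s_0} + (\phi_{Q_r}^-)^{-1}(\Lambda + 1)\, r,
\]
the error being exactly what the $\Lambda$-contribution accumulates to after inverting $\phi_{Q_r}^-$. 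The twin normalizations $(\Lambda+1)|Q_{2r}| \le 1$ and $\int_{Q_{2r}} \phi(x, |\nabla u|)\,dx \le 1$ keep every modular in the sub-unit regime, so \azero{} and \aone{} (or \aonen{}) give $\phi_B^+ \lesssim \phi_B^- + 1$ and one may pass freely between $\phi$ and its local infimum.

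For the weak Harnack from below I would apply the Caccioppoli inequality to $\tilde u := u + c$ with test functions of the form $\eta^q \tilde u^{-\gamma}$, $\gamma > 0$, yielding a Caccioppoli-type estimate for $\tilde u^{-s}$ and a logarithmic estimate for $\log \tilde u$. The translation by $c$ is essential: it makes $\tilde u \ge c > 0$, so negative powers are well defined, and it lets the $\Lambda$-error be absorbed, because $\Lambda \lesssim \phi_{Q_r}^-(c/r) \approx \phi(x, |\nabla \tilde u|)$ in the relevant regime. Invoking the Krylov--Safonov covering lemma on cubes (hence the formulation in cubes rather than balls) then yields a decay estimate for super-level sets of $\log \tilde u$, and a standard Moser-type argument produces $(\fint_{Q_r} \tilde u^\sigma\,dx)^{1/\sigma} \lesssim \essinf_{Q_r} \tilde u$ for some small $\sigma > 0$. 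Choosing $s_0 = \sigma$ in the sup estimate and combining the two bounds closes the proof.

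The principal obstacle is keeping all constants independent of $r$ and $\Lambda$, particularly in the iteration loops where naive estimates would pick up factors of $\Lambda$ growing as $r \to 0$. This is handled by the twin normalizations, which force every $\phi$-modular entering the argument to lie in the sub-unit regime where \aone{} (resp.\ \aonen{}) is effective, and by Lemma~\ref{lem:phi+t}, which permits us to treat \adeci{} as \adec{} on the shifted $\Phi$-function $\phi + t$. The \aonen{} branch additionally requires $u \in L^\infty$ so that Sobolev--Poincaré with averages is applicable; the norm $\|u\|_\infty$ then threads through both the sup estimate and the weak Harnack argument.
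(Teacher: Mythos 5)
Your architecture is broadly correct and matches the paper on several key points: the shift $\lambda_r = (\phi_{Q_r}^-)^{-1}(\Lambda+1)\,r$, the $\phi+t$ upgrade of \adeci{} to \adec{} (Lemma~\ref{lem:phi+t}), the twin normalizations keeping modulars sub-unit so that \aone{}/\aonen{} bridge $\phi$ and $\phi_Q^\pm$, and the final splice: sup estimate at exponent $s_0$ meets weak Harnack at the same small exponent. For the supremum bound your description is in fact the paper's De Giorgi iteration (shrinking cubes $r_j\searrow r/2$, rising levels $k_j\nearrow rk$, superlinear recursion in $Y_j$), not Moser iteration; the label is misleading but the content matches Lemma~\ref{HKLMP-lemma_4.3re} and Proposition~\ref{prop:esssupinf}. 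You omit the extra term $|u_{Q_{r/2}}|$ in Proposition~\ref{prop:esssupinf} and the observation that it can be dropped when $u\ge 0$ or $|\{u_+=0\}\cap Q_r|\ge\tfrac12|Q_r|$ — needed because Lemma~\ref{kappa0-lemma} applies the sup estimate to $l-u$, which is not sign-controlled.

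The genuine gap is in your weak Harnack argument. You propose Moser's route: test with $\eta^q \tilde u^{-\gamma}$, get a Caccioppoli estimate for $\tilde u^{-s}$, a BMO bound on $\log\tilde u$, and close via Krylov--Safonov. In the $p$-Laplacian case this works because $t\mapsto t^p$ is homogeneous: one can factor $\tilde u^{-\gamma p}$ out of $|\nabla(\eta\tilde u^{\beta})|^p$ and obtain a clean reverse inequality for powers of $\tilde u$. Under generalized Orlicz growth, $\phi(x,\cdot)$ is not homogeneous — \ainc{p} and \adec{q} only give two-sided power \emph{bounds}, not factorization — so $\phi\bigl(x,\eta^q\tilde u^{-\gamma}|\nabla\tilde u|\bigr)$ cannot be separated into a power of $\tilde u$ times $\phi(x,|\nabla\tilde u|)$, and the iteration does not close. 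Your sketch does not address this. The paper avoids the issue entirely: it applies the same \emph{nonnegative-level-set} Caccioppoli \eqref{eq:caccioppoli} to $l-u$ and $k-u$ (Lemmas~\ref{kappa0-lemma} and \ref{kappa-lemma}), then runs a De Giorgi measure-decay argument through the isoperimetric estimate $\bigl(\int_Q v^{n'}\,dx\bigr)^{1/n'}\lesssim\int_\Delta|\nabla v|\,dx$ for a truncation $v$ of $k-u$, and finally invokes the standard covering step. Since every test function involved is a linear truncation of $u$, no negative powers ever enter $\phi$, and only \ainc{}/\adec{} plus Lemma~\ref{lem:aJensen} are needed. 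Unless you supply the nontrivial Orlicz-adapted log/negative-power machinery, your proposed infimum argument does not go through.
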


The proof of this result (which continues through Sections~\ref{sect:esssup} and \ref{sect:essinf}) 
follows a different philosophy compared to 
our earlier paper \cite{HarHT17}: previously, much effort 
was directed at avoiding additional error terms which do not appear in the 
standard case, whereas now we focus on handling the error terms which 
appear. The reason is that the ``$+\Lambda$'' in \eqref{Hcon} 
as well as \adeci{} 
lead inevitably to similar additive error terms, so they must in any case 
be taken care of. These more streamlined proofs are made possible by new 
tools developed in the monograph \cite{HarH19}. It is especially 
worth mentioning the generalized Orlicz version of the Sobolev--Poincar\'e 
inequality (Theorem~\ref{thm:SP}) and the improved reverse H\"older 
inequality (Lemma~\ref{lem:jihoon}). While the proofs follow the well-known 
approach of De Giorgi, we found that they are very dependent on 
well set-up formulations (much more so that the variable exponent case): for instance the placement of $\tau$ on the left-hand side 
of \eqref{equ:DeGiorgi-class} and the estimate of 
$\frac{|A_j|}{|Q_j|}$ in the proof of Proposition~\ref{prop:esssupinf}. 
The main difficulty with the generalized Orlicz case is to move at suitable 
points in the proofs between $\phi(x,t)$ and $\phi_Q^-(t)$. 
This is accomplished via the Sobolev--Poincar\'e inequality or the 
Caccioppoli estimate. The former leads in the proof of Proposition~\ref{prop:esssupinf} 
to an additional term on the right-hand side, which can, however, be 
absorbed in the other terms in the specific cases needed for Harnack's 
inequality. Additional complications arise 
in several places because the Sobolev--Poincar\'e inequality holds only for functions 
with $\|\nabla u\|_{L^\phi(Q)}\le 1$. 

Recall that we define, for measurable $A\subset \Omega$, 
\[
\mathcal{F}(u, A) := \int_{A} F(x, u, \nabla u) \,dx. 
\]
By $Q_r$ we mean a cube with side length $r$ and faces parallel to the 
coordinate axes. Since we consider cubes, we speak of cubical minimizers, 
although spherical minimizers is a more common term for essentially the 
same thing. The results can also be adapted to spherical minimizers 
and $\omega$-minimizers defined in balls. 
%It is clear that quasiminimizers  are cubical quasiminimizers, but even 
%boundedness does not hold for cubical quasiminimizers in general (cf. 
%\cite{Giu03}). Therefore it is natural to consider a certain notion of 
%minimizers, namely an $\omega$-minimizer, more general class of functions 
%than a minimizer (i.e.\ $1$-quasiminimizer) for which the difference between 
%regularity estimates on balls and on general sets does not appear. 

\begin{defn}\label{defminimizer}
A function $u \in W^{1,\phi}_{loc}(\Omega)$ is called
\begin{itemize}
\item[(i)] 
a \textit{local quasiminimizer} of $\mathcal{F}$ if there exists $Q\ge 1$ such that 
\[
\mathcal{F}(u, \Omega'\cap \{u\neq v\}) \le Q \, \mathcal{F}(v,\Omega'\cap \{u\neq v\})
\]
for every open $\Omega' \Subset \Omega$ and every $v\in u+W^{1,1}_0(\Omega')$.
\item[(ii)] 
a \textit{weak quasiminimizer with bound $M>0$} 
of $\mathcal{F}$ if there exists $Q\ge 1$ such that 
\[
\mathcal{F}(u, \Omega'\cap \{u\neq v\}) \le Q \, \mathcal{F}(v,\Omega'\cap \{u\neq v\})
\]
for every open $\Omega' \Subset \Omega$ and every $v \in u+ W^{1,1}_0(\Omega')
$ with $\| v \|_{L^{\infty}(\Omega)}\le M$.
\item[(iii)] 
an \textit{$\omega$-minimizer} of $
\mathcal{F}$ if
there exists a non-decreasing concave function $\omega: [0, \infty) \to 
[0,\infty)$   satisfying $\omega(0)=0$ such that 
\[ 
\mathcal{F}(u, Q_r) \le(1+ \omega(r)) \, \mathcal{F}(v, Q_r)
\]
for every $v \in u+W^{1,1}_0(Q_r)$ with $Q_r \Subset \Omega$.
\item[(iv)] 
a \textit{cubical quasiminimizer} 
of $\mathcal{F}$ if there exists $Q\ge 1$ such that 
\[
\mathcal{F}(u, Q_r) \le Q \, \mathcal{F}(v, Q_r)
\]
for every $v \in u+ W^{1,1}_0(Q_r)$ with $Q_r \Subset \Omega$.
\end{itemize}
\end{defn}

Every minimizer (i.e.\ $1$-quasiminimizer) is both a quasiminimizer and an 
$\omega$-minimizer; and each of these is also a cubical quasiminimizer. 
In addition, it is clear that a quasiminimizer is a weak quasiminimizer with any 
bound $M>0$.
Note that there is no \textit{a priori} relationship between quasiminimizers and 
$\omega$-minimizers: $\omega$-minimizers satisfy a stricter inequality but for a 
restricted range of sets. 

We observe that if $u$ is a quasiminimizer of $\mathcal F$, then it is also 
a quasiminimizer of $\phi + \Lambda$. An analogous result holds for weak quasiminimizers 
and cubical minimizers, but not $\omega$-minimizers. 

To deal with quasiminimizers of $\mathcal F$ we need to 
generalize the results of \cite{HarHT17} which only deal with quasiminimizers of 
$\phi$. It is crucial to track the dependence of constants on $\Lambda$, 
since in Section~\ref{sect:omega} $\Lambda$ depends on the $\omega$-minimizer 
$u$ and may blow up in small balls.  

We record the following iteration lemma, which will be needed in what follows. 

\begin{lem}[Lemma~4.2 in \cite{HarHT17}]\label{Giustin-lemma2}
Let $Z$ be a bounded non-negative function in the  interval $[r,R] \subset \R$ 
and let $X$ satisfy \adec{} on $[0, \infty)$.  
Assume that there exists $\theta\in [0,1)$ such that 
\[
Z(t) \le X(\tfrac1{\sigma-\tau}) + \theta Z(\sigma)
\]
for all $r \le \tau < \sigma \le R$. Then
\[
Z(r) \lesssim X(\tfrac1{R-r}),
\]
where the implicit constant depends only on the \adec{} constants and $\theta$ 
but not on $\|Z\|_\infty$.
\end{lem}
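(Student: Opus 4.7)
The plan is a standard geometric iteration in the radius variable, adapted to the growth condition \adec{q} rather than a pure power. Choose $\lambda \in (0,1)$ to be fixed later and set $\tau_0 := r$, $\tau_{i+1} := \tau_i + (1-\lambda)\lambda^i (R-r)$, so that $\tau_i \nearrow R$. Applying the hypothesis with $(\tau,\sigma) = (\tau_i,\tau_{i+1})$ and then iterating gives, for every $N \ge 1$,
\[
Z(r) \le \sum_{i=0}^{N-1} \theta^i X\!\left(\tfrac{1}{\tau_{i+1}-\tau_i}\right) + \theta^N Z(\tau_N).
\]

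Next I would estimate the $X$-terms using \adec{q}. Since $(1-\lambda)\lambda^i < 1$, we have $\frac{1}{\tau_{i+1}-\tau_i} = \frac{1}{(1-\lambda)\lambda^i (R-r)} \ge \frac{1}{R-r}$, so \adec{q} yields
\[
X\!\left(\tfrac{1}{\tau_{i+1}-\tau_i}\right) \le L \left(\tfrac{1}{(1-\lambda)\lambda^i}\right)^{\!q} X\!\left(\tfrac{1}{R-r}\right).
\]
Inserting this in the previous display gives
\[
Z(r) \le \frac{L}{(1-\lambda)^q}\left(\sum_{i=0}^{N-1} \left(\tfrac{\theta}{\lambda^q}\right)^{\!i}\right) X\!\left(\tfrac{1}{R-r}\right) + \theta^N Z(\tau_N).
\]

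Now choose $\lambda \in (\theta^{1/q},1)$, which is possible because $\theta < 1$; then $\theta/\lambda^q < 1$, so the geometric series converges to a constant depending only on $q$, $L$ and $\theta$. Finally, let $N \to \infty$: boundedness of $Z$ forces $\theta^N Z(\tau_N) \to 0$, which is the only role played by $\|Z\|_\infty$, and explains why it disappears from the final estimate. The main subtlety is just pinning down the correct range of $\lambda$ so that the \adec{q}-generated factor $\lambda^{-qi}$ is beaten by $\theta^i$; the rest is bookkeeping.
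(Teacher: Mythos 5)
Your proof is correct, and it is the standard geometric iteration argument for this kind of lemma. The paper itself does not prove the statement but cites Lemma~4.2 of \cite{HarHT17}, and the argument there is this same scheme: pick a geometric sequence of radii $\tau_i \nearrow R$ with gaps $(1-\lambda)\lambda^i(R-r)$, iterate the hypothesis to get a geometric series with ratio $\theta$ against $X$-terms that grow like $\lambda^{-qi}$ by \adec{q}, then tune $\lambda\in(\theta^{1/q},1)$ so that $\theta/\lambda^q<1$ and let $N\to\infty$ to kill the boundary term $\theta^N Z(\tau_N)$ using boundedness of $Z$. You also correctly identify the point of the clause about $\|Z\|_\infty$: it enters only through $\theta^N Z(\tau_N)$, which is discarded in the limit, so the final constant depends only on $L$, $q$, and $\theta$.
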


Note that $\|Z\|_\infty$ does not impact the implicit constant in the 
previous result. This will be important for us later on. 

Cubical quasiminimizers need not be bounded in general (cf. \cite[Example~6.5, p.~188]{Giu03}), 
but they do have the following higher integrability property. 

\begin{lemma}[Reverse H\"older inequality]\label{lem:reverse}
Let $\phi\in\Phiw(\Omega)$ satisfy Assumption~\ref{ass:std} and 
suppose $u$ is a cubical quasiminimizer of $\mathcal{F}$.
For any $Q_r \subset \Omega$ with $|Q_r| \le 1$, there exists $s_0>0$ 
such that 
\begin{equation}\label{eq:reverse}
\bigg( \fint_{Q_{\frac{r}{2}}} \phi(x, |\nabla u|)^{1+s_0} 
\,dx\bigg)^{\frac{1}{1+s_0}} 
\lesssim 
\fint_{Q_{r}} \phi(x, |\nabla u|) 
\,dx + \Lambda +1.
\end{equation}
%when $\rho_{L^\phi(Q_r)} (|\nabla u|) \le \frac12$ and .
\end{lemma}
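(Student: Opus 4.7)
\medskip

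The plan is to follow the classical De Giorgi--Gehring scheme in three stages, with care taken to track the additive constant $\Lambda$.

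\textbf{Stage 1 (Caccioppoli inequality).} For concentric subcubes $Q_\tau \subset Q_\sigma \subset Q_r$ with $r/2 \le \tau < \sigma \le r$, I test the cubical quasiminimality against the competitor $v := u - \eta(u - (u)_{Q_\sigma})$, where $\eta$ is a standard cutoff with $\eta\equiv 1$ on $Q_\tau$, $\spt\eta\subset Q_\sigma$ and $|\nabla\eta|\lesssim (\sigma-\tau)^{-1}$. Using the structural bound \eqref{Hcon}, \adec{} to split $\phi(x,|\nabla v|)$ on $\{\eta \ne 0,1\}$, and absorbing the $\phi(x,|\nabla u|)$ term on the overlap via Lemma~\ref{Giustin-lemma2}, I obtain
\[
\fint_{Q_\tau} \phi(x,|\nabla u|)\,dx
\lesssim \fint_{Q_\sigma} \phi\!\left(x,\tfrac{|u-(u)_{Q_\sigma}|}{\sigma-\tau}\right) dx + \Lambda + 1.
\]
The $\Lambda+1$ originates from the upper bound $F(x,t,z)\le N(\phi(x,|z|)+\Lambda)$ applied to $v$ on $\{\eta\ne 0\}\subset Q_\sigma$, divided by $|Q_\sigma|$.

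\textbf{Stage 2 (Reverse H\"older with exponent below one).} Choose $s\in(1,\min\{p,n/(n-1)\})$ and apply the modular Sobolev--Poincar\'e inequality \eqref{SPineq} of Theorem~\ref{thm:SP} to $u-(u)_{Q_\sigma}$ on $Q_\sigma$. In the \aone{} case the hypothesis $\|\nabla u\|_{\phi^{1/s}}\lesssim 1$ is guaranteed by Assumption~\ref{ass:std}(1) together with H\"older's inequality and $|Q_r|\le 1$; in the \aonen{} case Theorem~\ref{thm:SP} is valid with a constant depending on $\|u\|_\infty$. Taking $\sigma=r$, $\tau=r/2$ this yields, for some constant $c_0$,
\[
\fint_{Q_{r/2}} \phi(x,|\nabla u|)\,dx
\le c_0 \left(\fint_{Q_r} \phi(x,|\nabla u|)^{1/s}\,dx\right)^{s} + c_0(\Lambda+1).
\]
By the standard covering/freezing argument the same inequality holds on every subcube $Q_\rho(x_0)\subset Q_r$ (with $Q_{r/2}$ replaced by $Q_{\rho/2}(x_0)$) since Assumption~\ref{ass:std} is preserved on subcubes.

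\textbf{Stage 3 (Gehring's lemma).} Setting $g := \phi(\cdot,|\nabla u|)^{1/s}\in L^s(Q_r)$ and $h := (\Lambda+1)^{1/s}$, Stage~2 reads
\[
\fint_{Q_{\rho/2}(x_0)} g^s\,dx \lesssim \left(\fint_{Q_\rho(x_0)} g\,dx\right)^{s} + h^{s}
\qquad \text{for every } Q_\rho(x_0)\subset Q_r.
\]
Gehring's self-improving lemma in its sharp form (e.g.\ Giusti \cite{Giu03}, Proposition~6.1) then produces $s_0>0$ and $C>0$ such that
\[
\left(\fint_{Q_{r/2}} g^{s(1+s_0)}\,dx\right)^{\!1/(s(1+s_0))}
\le C\left(\fint_{Q_r} g^s\,dx\right)^{\!1/s} + C h,
\]
which, upon rewriting in terms of $\phi(x,|\nabla u|)$ and raising both sides to the power $s$, is precisely \eqref{eq:reverse}.

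\textbf{Main obstacle.} The delicate point is ensuring that $\Lambda$ enters only additively through the whole chain: the natural choice is to carry the constant $(\Lambda+1)$ on the right and to keep multiplicative constants independent of it. This forces us to use \adec{} (rather than \adeci{}) in splitting $\phi(x,|\nabla v|)$ in Stage~1; here Lemma~\ref{lem:phi+t}(b) is invoked implicitly via the standing Assumption~\ref{ass:std}. A secondary subtlety is verifying the Sobolev--Poincar\'e hypothesis in the \aone{} case uniformly in the subcube, for which the global bound $\rho_\phi(\nabla u)\le 1$ is indispensable.
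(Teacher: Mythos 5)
Your three-stage scheme (Caccioppoli via a perturbed competitor, then Sobolev--Poincar\'e, then Gehring) is exactly the structure of the paper's proof, and Stages~2 and~3 are correct as written, including the careful tracking of the additive constant $\Lambda+1$ through the Gehring self-improvement. The one place you diverge from the paper's argument is also the one place where a technical slip appears: in Stage~1 you subtract the \emph{moving} average $(u)_{Q_\sigma}$ in the competitor $v = u - \eta\bigl(u-(u)_{Q_\sigma}\bigr)$. After the hole-filling step, the resulting inequality has on its right-hand side a Poincar\'e term $\int_{Q_\sigma}\phi\bigl(x,\tfrac{|u-(u)_{Q_\sigma}|}{\sigma-\tau}\bigr)\,dx$ that depends on $\sigma$ both through the integration domain and through the base point of the average. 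But Lemma~\ref{Giustin-lemma2} requires the absorbed term to be a fixed function $X\bigl(\tfrac1{\sigma-\tau}\bigr)$ of the gap alone; a $\sigma$-dependent average violates that form, so the iteration lemma does not apply directly. The paper sidesteps this precisely by subtracting the \emph{fixed} average $u_{Q_r}$ in the competitor, so that the absorbed quantity is $\int_{Q_r}\phi\bigl(x,\tfrac{|u-u_{Q_r}|}{\sigma-\tau}\bigr)\,dx$, which is indeed a function of $\tfrac{1}{\sigma-\tau}$ satisfying \adec{}. Your version can be repaired --- e.g.\ by writing $|u-(u)_{Q_\sigma}|\le |u-u_{Q_r}|+|u_{Q_r}-(u)_{Q_\sigma}|$ and noting that the constant difference is uniformly controlled for $\sigma\in[r/2,r]$ --- but the fixed-average choice is the cleanest way to make Lemma~\ref{Giustin-lemma2} apply, and you should adopt it.
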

\begin{proof}
Consider concentric cubes $Q_{\sigma} \subset Q_{\tau} \subset Q_r$ for 
$0<\sigma<\tau\le r$. 
Let $\eta \in C^{\infty}_0(Q_{\tau})$ be a cut-off function such that $0\le
\eta \le1,\ \eta \equiv 1$ in $Q_{\sigma}$ and $|\nabla\eta| \le\frac{2}{\tau-\sigma}$. 
We use $v := u- \eta \big( u - u_{Q_r} \big)$ as a test function in 
Definition~\ref{defminimizer} (iv) in order to get 
\begin{align}\label{minineq}
\begin{split}
\nu\int_{Q_{\sigma}}\phi(x,|\nabla u|) \,dx 
& \le\int_{Q_{\tau}} F(x, u, \nabla u) \,dx \\
&\le Q \int_{Q_{\tau}} F(x, v, \nabla v) \,dx 
\le 
Q N\int_{Q_{\tau}}\phi(x,|\nabla v|) + \Lambda \,dx.
\end{split}
\end{align} 
We note that $|\nabla v| \le(1-\eta)|\nabla u|+ |\nabla \eta||u-u_{Q_r}| 
\le
2\max\{ (1-\eta)|\nabla u|, |\nabla\eta||u-u_{Q_r}|\}$.
By $|\nabla\eta| \le\frac{2}{\tau-\sigma}$ and \adec{}, we have that 
\[
 \phi(x, |\nabla v|) 
\le 
2^qL \phi(x, (1-\eta)|\nabla u|) + 4^qL\phi\bigg(x, \frac{|u-u_{Q_r}|}
{\tau-\sigma} \bigg).
\]
Denote $c_1:=2^qLQN$. Combining this inequality with \eqref{minineq}, we get that 
\[
\begin{split}
\nu\int_{Q_{\sigma}}\phi(x,|\nabla u|) \,dx 
&\le
c_1\int_{Q_{\tau}}\phi(x, (1-\eta)|\nabla u|)\,dx 
+ c \int_{Q_{\tau}}\phi\bigg(x, \frac{|u-u_{Q_r}|}{\tau-\sigma} \bigg)\,dx 
+ c\Lambda r^n\\
&\le
c_1 \int_{Q_{\tau}\setminus Q_{\sigma}} \phi(x,|\nabla u|) \,dx 
+ c\int_{Q_{\tau}}\phi\bigg(x,\frac{|u-u_{Q_r}|}{\tau-\sigma}\bigg)\,dx + c\Lambda r^n,
\end{split}
\]
where the second inequality follows since 
$\phi\big(x,(1-\eta)|\nabla u|\big) = \phi(x,0) =0 $ in $Q_{\sigma}$. 

Now we use the hole-filling trick and add 
$c_1 \int_{Q_{\sigma}} \phi\big(x,|\nabla u|\big) \,dx$ to both sides of the previous 
inequality and divide by $c_1+\nu$. Then it follows that 
\begin{align*}
 \int_{Q_{\sigma}}\phi(x,|\nabla u|) \,dx 
\le\frac{c_1}{c_1+\nu} \int_{Q_{\tau}} \phi(x,|\nabla u|) \,dx
 + c \int_{Q_{\tau}}\phi\bigg(x,\frac{|u-u_{Q_r}|}{\tau-\sigma}\bigg)\,dx 
+ c\Lambda r^n.
\end{align*} 
By the iteration lemma (Lemma~\ref{Giustin-lemma2}) for the first step 
and the Sobolev--Poincar\'e inequality (Theorem~\ref{thm:SP}) for the second, 
we conclude that 
\begin{equation}\label{eq:cubicalCaccioppoli}
\fint_{Q_{\frac{r}{2}}} \phi(x, |\nabla u|)\, dx 
\lesssim \fint_{Q_r} \phi\bigg(x, \frac{|u-u_{Q_r}|}{r}\bigg)\, dx + \Lambda
\lesssim
\bigg(\fint_{Q_r} 
\phi(x, |\nabla u|)^{\frac 1s}\, dx\bigg)^{s}+\Lambda +1;
\end{equation}
note that the Sobolev--Poincar\'e inequality can be used since 
\[
\int_{Q_r} \phi(x, |\nabla u|)^{\frac 1s} \, dx 
\le 
\int_{Q_r} \phi(x, |\nabla u|) +1\, dx  \le 2.
\]
Hence, by Gehring's lemma (see \cite[Theorem~6.6 and Corollary 6.1, pp.~203--204]{Giu03}), the desired reverse 
H\"older inequality holds. 
\end{proof}

The reverse H\"older inequality has the following ``self-improving'' property. 
\begin{lemma}[Lemma~3.8, \cite{HasO_pp18}]\label{lem:jihoon} 
If $u\in W^{1,\phi}_{loc}(\Omega)$ satisfies \eqref{eq:reverse}, then for every 
$s\in[0,1]$ 
\begin{equation*}%\label{high1}
\left(\fint_{Q_r} \phi(x,|\nabla u|)^{1+s_0}\,dx\right)^{\frac{1}{1+s_0}} 
\lesssim
\bigg(\fint_{Q_{2r}} \phi(x,|\nabla u|)^s\,dx\bigg)^{\frac{1}{s}} +\Lambda+1, 
\end{equation*}
where the implicit constant depends on $s$ and the constant in \eqref{eq:reverse}.
If $\phi$ satisfies \adec{}, then this implies that 
\[
\fint_{Q_r} \phi(x,|\nabla u|)\,dx
\le
\left(\fint_{Q_r} \phi(x,|\nabla u|)^{1+s_0}\,dx\right)^{\frac{1}{1+s_0}} \lesssim \phi^+_{Q_{2r}}\left(\fint_{Q_{2r}} |\nabla u|\,dx\right) +\Lambda+1.
\]
\end{lemma}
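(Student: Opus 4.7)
The first inequality is a ``self-improving'' variant of the reverse H\"older bound \eqref{eq:reverse}; the second then reduces to the first via a reverse Jensen inequality. I would proceed as follows.

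\textbf{Part 1 (self-improvement).} The key idea is to homogenize by absorbing the constant $\Lambda+1$ into the function itself. Set $G:=\phi(x,|\nabla u|)+\Lambda+1$; since $G\ge \Lambda+1$, the bound \eqref{eq:reverse} simplifies to the additively constant-free reverse H\"older inequality
\[
\bigg(\fint_{Q_{r/2}} G^{1+s_0}\,dx\bigg)^{\frac1{1+s_0}} \lesssim \fint_{Q_r} G\,dx.
\]
Such a homogeneous inequality self-improves classically, for every $s\in(0,1]$, to
\[
\bigg(\fint_{Q_r} G^{1+s_0}\,dx\bigg)^{\frac1{1+s_0}} \lesssim \bigg(\fint_{Q_{2r}} G^s\,dx\bigg)^{\frac1s}.
\]
The argument runs on concentric cubes $Q_\rho\Subset Q_\sigma$ with $r\le \rho<\sigma\le 2r$: one first extends the displayed inequality to arbitrary such cubes via a hole-filling argument with cutoff $|\nabla\eta|\lesssim(\sigma-\rho)^{-1}$; then interpolates $\|G\|_{L^1}\le \|G\|_{L^s}^{\theta}\|G\|_{L^{1+s_0}}^{1-\theta}$ via log-convexity, with $\theta\in(0,1)$ determined by $1=\theta/s+(1-\theta)/(1+s_0)$; applies Young's inequality with small parameter $\varepsilon>0$ to split the product; and finally invokes Lemma~\ref{Giustin-lemma2} on $[r,2r]$ to absorb the resulting $L^{1+s_0}$-term. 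The crucial feature of that lemma is that the implicit constant is independent of $\|Z\|_\infty$, so $L^{1+s_0}$-integrability of $G$ on $Q_{2r}$ emerges as a conclusion rather than a hypothesis. Reverting to $g:=\phi(x,|\nabla u|)$ via the subadditivity $(a+b)^s\le a^s+b^s$ for $s\in(0,1]$ yields the first claimed inequality.

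\textbf{Part 2 (chain with $\phi^+_{Q_{2r}}$).} The middle inequality $\fint_{Q_r}\phi\,dx\le (\fint_{Q_r}\phi^{1+s_0}\,dx)^{1/(1+s_0)}$ is Jensen's inequality for the convex function $t\mapsto t^{1+s_0}$. For the last bound, apply Part 1 with $s=1/q$, where $q$ is the \adec{q} exponent, and use $\phi(x,t)\le \phi^+_{Q_{2r}}(t)$ pointwise on $Q_{2r}$:
\[
\bigg(\fint_{Q_r}\phi^{1+s_0}\,dx\bigg)^{\frac1{1+s_0}} \lesssim \bigg(\fint_{Q_{2r}}\phi^+_{Q_{2r}}(|\nabla u|)^{1/q}\,dx\bigg)^{q}+\Lambda+1.
\]
Since $\phi^+_{Q_{2r}}$ inherits \adec{q}, the function $\psi(t):=\phi^+_{Q_{2r}}(t)^{1/q}$ satisfies that $\psi(t)/t$ is almost decreasing, i.e., $\psi(\lambda t)\lesssim \lambda\psi(t)$ for $\lambda\ge 1$. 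A short case split by whether $|\nabla u|(x)$ exceeds its average $f_0:=\fint_{Q_{2r}}|\nabla u|\,dx$ (using the above growth control where $|\nabla u|\ge f_0$ and monotonicity of $\psi$ on its complement) yields the reverse Jensen bound
\[
\fint_{Q_{2r}} \psi(|\nabla u|)\,dx \lesssim \psi(f_0).
\]
Raising this to the $q$-th power and substituting in the preceding display completes the proof.

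\textbf{Expected obstacle.} The most delicate point is the iteration in Part 1: the exponents produced by interpolation and Young's inequality must be balanced so that the iteration's error term $X(t)$ satisfies \adec{}, and the geometric factors $(\sigma-\rho)^{-\alpha}$ from the concentric-cube version of \eqref{eq:reverse} must fit inside $X(t)$. The homogenization via $G=g+\Lambda+1$ is essential: without it, the $+\Lambda+1$ terms would survive inside the product $\|G\|_{L^s}^{\theta}\|G\|_{L^{1+s_0}}^{1-\theta}$ and would not be absorbable by Young.
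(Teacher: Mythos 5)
The paper does not actually prove this lemma; it cites it verbatim from \cite{HasO_pp18}, so there is no in-paper proof to compare against. Judging your attempt on its own merits:

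Your \textbf{homogenization} ($G:=\phi(\cdot,|\nabla u|)+\Lambda+1$, so that $G\ge\Lambda+1$ converts the inhomogeneous reverse H\"older estimate \eqref{eq:reverse} into a constant-free one) is exactly the right move and, combined with the interpolation/Young/iteration scheme, is a standard and correct route to the self-improvement. Part~2 — Jensen for $t\mapsto t^{1+s_0}$ together with the reverse Jensen estimate $\fint\psi(|\nabla u|)\lesssim\psi(\fint|\nabla u|)$ for the almost-sublinear $\psi:=(\phi^+_{Q_{2r}})^{1/q}$ — is clean and correct, and is essentially what Lemma~\ref{lem:aJensen} and \adec{q} give you.

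The one step you should not wave away is the extension of \eqref{eq:reverse} from the fixed ratio $Q_{r/2}\subset Q_r$ to arbitrary concentric cubes $Q_\rho\subset Q_\sigma$ with $r\le\rho<\sigma\le 2r$. You describe this as ``a hole-filling argument with cutoff $|\nabla\eta|\lesssim(\sigma-\rho)^{-1}$'', but hole-filling is the mechanism behind the \emph{Caccioppoli inequality} and presupposes the quasiminimizing structure of $u$, which the lemma does not assume: the hypothesis is just that $u$ satisfies \eqref{eq:reverse}. What is actually needed is a purely analytic covering/rescaling argument: cover $Q_\rho$ by cubes of side comparable to $\sigma-\rho$ whose doubles lie in $Q_\sigma$, apply \eqref{eq:reverse} on each, interpolate and sum with bounded overlap, and only then invoke Lemma~\ref{Giustin-lemma2}. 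This is a known technique but not trivial; when written in full you must check that the geometric factors that arise land inside the $X(t)$ term with an \adec{} exponent, and that the $\|Z\|_\infty$-independence of the iteration lemma really is what rescues you (it is, precisely because \eqref{eq:reverse} already gives finiteness of $\fint_{Q_\rho}G^{1+s_0}$ for every admissible $\rho$). With that repair, the proof is sound.

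Two small remarks: the endpoint $s=0$ in the statement is formal (the quantity $(\fint G^s)^{1/s}$ degenerates), so your restriction to $s\in(0,1]$ is the right interpretation; and in Part~2 the choice $s=1/q$ is admissible because \ainc{1} forces $q\ge 1$.
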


Let us write 
\[
A(k,r) := Q_r \cap \{u >k\}. 
\]

\begin{lem}[Caccioppoli inequality]\label{lem:Caccioppoli}
Let $\phi\in \Phiw(\Omega)$ satisfy \adec{} and 
let $u$ be a local quasiminimizer of $\mathcal F$. Then 
for all $k\ge 0$ and $0<r<R<\infty$ with $Q_R\subset \Omega$ we have
\begin{equation}\label{eq:caccioppoli}
\int_{A(k, r)} \phi(x,|\nabla (u-k)_+|) \,dx 
\lesssim
\int_{A(k, R)} \phi\Big(x,\frac{(u-k)_+}{R-r}\Big)  \,dx + |A(k,R)|\Lambda.
\end{equation}
\end{lem}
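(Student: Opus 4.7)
The plan is to follow the classical De Giorgi test-function argument, combined with a hole-filling trick and the iteration Lemma~\ref{Giustin-lemma2}. For any $r \le \tau < \sigma \le R$ I fix a standard cut-off $\eta \in C^\infty_0(Q_\sigma)$ with $\eta \equiv 1$ on $Q_\tau$, $0 \le \eta \le 1$ and $|\nabla \eta| \le 2/(\sigma-\tau)$, and use the competitor $v := u - \eta(u-k)_+$. Since $v = u$ on $\{u \le k\}$, we have $\{u \ne v\} \subset A(k,\sigma)$; and on $A(k,\tau)$ one has $v = k < u$, so $A(k,\tau) \subset \{u \ne v\}$. The quasiminimizer property together with the structure condition \eqref{Hcon} then yields
\[
\nu \int_{A(k,\tau)} \phi(x, |\nabla u|)\, dx \le QN \int_{A(k,\sigma)} \big(\phi(x, |\nabla v|) + \Lambda\big)\, dx.
\]

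Next I estimate $\phi(x, |\nabla v|)$. On $\{u > k\}$ one computes $\nabla v = (1-\eta)\nabla u - (u-k)_+ \nabla \eta$, so $|\nabla v| \le 2\max\{(1-\eta)|\nabla u|,\, 2(u-k)_+/(\sigma-\tau)\}$. Applying \adec{q} gives
\[
\phi(x, |\nabla v|) \lesssim \phi\big(x, (1-\eta)|\nabla u|\big) + \phi\!\left(x,\, \frac{(u-k)_+}{\sigma - \tau}\right).
\]
The first term vanishes on $A(k,\tau)$ since $\eta \equiv 1$ there, and is bounded by $\phi(x, |\nabla u|)$ elsewhere in $A(k,\sigma)$. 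Consequently
\[
\int_{A(k,\tau)} \phi(x, |\nabla u|)\, dx \le C_1 \int_{A(k,\sigma)\setminus A(k,\tau)} \phi(x, |\nabla u|)\, dx + C_2 \int_{A(k,R)} \phi\!\left(x,\, \frac{(u-k)_+}{\sigma-\tau}\right)\, dx + C_3 |A(k,R)|\Lambda.
\]

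Now comes the hole-filling step: add $C_1 \int_{A(k,\tau)} \phi(x, |\nabla u|)\, dx$ to both sides and divide by $1+C_1$, producing
\[
Z(\tau) \le \theta\, Z(\sigma) + X\!\left(\tfrac{1}{\sigma - \tau}\right),
\]
where $Z(\tau) := \int_{A(k,\tau)} \phi(x, |\nabla u|)\, dx$, $\theta := C_1/(1+C_1) \in (0,1)$, and $X(s) := c \int_{A(k,R)} \phi(x, s(u-k)_+)\, dx + c|A(k,R)|\Lambda$. A short check shows $X$ satisfies \adec{q} on $[0,\infty)$: the integrand inherits it from $\phi$ via the substitution $t = s(u-k)_+$, and the additive constant term becomes strictly decreasing after division by $s^q$. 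Since $u \in W^{1,\phi}_{\loc}(\Omega)$ and $Q_R \subset \Omega$, the function $Z$ is bounded on $[r,R]$, so Lemma~\ref{Giustin-lemma2} applies and yields $Z(r) \lesssim X(1/(R-r))$, which is precisely the desired inequality because $\nabla (u-k)_+ = \nabla u$ a.e.\ on $A(k,r)$.

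The main technical point is the interplay between hole-filling and iteration: one must simultaneously absorb the unwanted gradient term on the right into the left, while keeping the error term $\phi(x, (u-k)_+/(\sigma-\tau))$ in the functional form required by Lemma~\ref{Giustin-lemma2}. The additive $\Lambda$ error fits in naturally, contributing only a constant term $c|A(k,R)|\Lambda$ to $X$, which is of the correct homogeneity. A minor subtlety worth flagging is that the inclusions $A(k,\tau) \subset \{u \ne v\} \subset A(k,\sigma)$ must be verified carefully up to null sets, so that the quasiminimizer inequality applies in the intended direction.
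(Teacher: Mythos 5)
Your proposal is correct and follows essentially the same route as the paper: the test function $v := u - \eta(u-k)_+$, the quasiminimizer property applied on the set $\{u \neq v\}$ with the structure condition \eqref{Hcon}, the pointwise \adec{} estimate of $\phi(x,|\nabla v|)$, and then hole-filling followed by the iteration Lemma~\ref{Giustin-lemma2}. The paper simply states the last two steps tersely with a reference to Lemma~4.3 of \cite{HarHT17}, whereas you have supplied the details explicitly (including the check that $X$ satisfies \adec{q} so the iteration lemma applies); the set inclusions $A(k,\tau) \subset \{u\neq v\} \subset A(k,\sigma)$ and the substitution $\nabla(u-k)_+ = \nabla u$ on $A(k,r)$ are also handled correctly.
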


\begin{proof} 
Let $r \le \sigma < \tau \le R$ and $k\ge 0$. 
Let $\eta \in C^\infty_0(Q_\tau)$ be such that 
$0 \le\eta \le 1$, $\eta = 1$ in $Q_\sigma$, and
$|\nabla\eta| \le\frac{2}{\tau-\sigma}$. Denote $v := u-\eta (u-k)_+$. 
%Note that $v=u$ in $Q_\tau\setminus A(k,\tau)$. 
Since $u$ is a local quasiminimizer of $\mathcal F$ 
with constant $Q$ and $\spt(u-v) \subset Q_\tau$,
\[
\nu\int_{\{u \neq v\} \cap Q_\tau} \phi(x,|\nabla u|)  \,dx 
\le
QN \int_{\{u \neq v\} \cap Q_\tau} \phi(x,|\nabla v|) + \Lambda \,dx.
\]
Since $A(k,\sigma)\subset \{u\ne v\}\cap Q_\tau\subset A(k,\tau)\subset A(k,R)$, 
this implies that
\[
\int_{A(k,\sigma)} \phi(x,|\nabla u|) \,dx 
\lesssim 
\int_{A(k,\tau)} \phi(x,|\nabla v|) \,dx + |A(k,R)|\Lambda. 
\]
The integrals are handled by the hole-filling trick and the iteration lemma as in 
Lemma~\ref{lem:reverse} (see Lemma~4.3 of \cite{HarHT17} for exact details), while the 
second term on the right-hand side appears directly on the 
right-hand side of the claim. 
\end{proof}

%%%%%%%%%%%%%%%%%%%%%%%%%%%%%%%%%%%%%%%%%%%%%%%%%%%%%%%%%%%%%%%%%%%%%%%%%%%%%%
%%%%%%%%%%%%%%%%%%%%%%%%%%%%%%%%%%%%%%%%%%%%%%%%%%%%%%%%%%%%%%%%%%%%%%%%%%%%%%
%%%%%%%%%%%%%%%%%%%%%%%%%%%%%%%%%%%%%%%%%%%%%%%%%%%%%%%%%%%%%%%%%%%%%%%%%%%%%%

\section{Estimating the essential supremum}\label{sect:esssup}

We now start our proof of Harnack's inequality. As is usual with De Giorgi's method, 
we first derive bounds for the essential supremum of the function. 
In the next section, these will be used to bound also the infimum, 
which combined give the Harnack inequality. 
Recall that $A(k,r)=Q_r\cap \{u>k\}$. 

In this paper we state our results in a modular format so as to make them easier to 
extend later. For instance, in the next result we assume the 
Caccioppoli inequality instead of assuming that $u$ is a quasiminimizer. 
If the Caccioppoli inequality is extended to a larger class, then the next result 
need not be reproved (cf.\ Remark~\ref{rem:weak-quasiminimizers}). 

\begin{lem}\label{HKLMP-lemma_4.3re}
Let $\phi\in\Phiw(\Omega)$ and $u\in W^{1,\phix}_\loc(\Omega)$ 
satisfy Assumption~\ref{ass:std}. 
Suppose that $u$ satisfies the Caccioppoli inequality \eqref{eq:caccioppoli}. 
Let $k\ge 0$ and $0< \sigma < \tau \le R$  with 
%$\rho_{L^\phi(Q_{R})}(\nabla u)\le 1$ and 
$Q_{R}\subset\Omega$ and $(\Lambda+1)|Q_R| \le 1$.
Then
\begin{equation}\label{equ:DeGiorgi-class}
\begin{split}
&\int_{Q_\sigma} \phi\Big(x,\frac{(u-k)_+}{\tau}\Big)\,dx \\
&\qquad\lesssim 
\Big(\frac \tau{\tau-\sigma}\Big)^\frac{q^2}p
\bigg(\frac{|A(k,\tau)|}{|Q_\tau|}\bigg)^\frac 1{2n} 
\bigg(\int_{ Q_\tau} \phi\Big(x,\frac{(u-k)_+}{\tau-\sigma}\Big)\,dx 
+ |A(k,\tau)|(\Lambda +1) \bigg). 
\end{split}
\end{equation}
\end{lem}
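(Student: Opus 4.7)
The plan is to combine the zero-boundary Sobolev--Poincaré inequality \eqref{SPineq2} with the Caccioppoli estimate \eqref{eq:caccioppoli} via a standard cut-off. Fix the intermediate radius $\tau':=(\sigma+\tau)/2$ and a cut-off $\eta\in C_0^\infty(Q_{\tau'})$ with $\eta\equiv 1$ on $Q_\sigma$ and $|\nabla\eta|\le 4/(\tau-\sigma)$; set $v:=(u-k)_+\eta\in W^{1,1}_0(Q_\tau)$. I then apply Theorem~\ref{thm:SP} to $v$ on $Q_\tau$ with the specific Sobolev exponent $s=\tfrac{2n}{2n-1}\in[1,\tfrac{n}{n-1})$, chosen precisely so that $1-\tfrac{1}{s}=\tfrac{1}{2n}$, matching the exponent of $|A(k,\tau)|/|Q_\tau|$ in the statement.

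For the right-hand side of SP, I use the pointwise bound $|\nabla v|\le|\nabla u|\chi_{A(k,\tau')}+(u-k)_+|\nabla\eta|$ together with \adec{q} to split
\[
\phi(x,|\nabla v|)\lesssim \phi(x,|\nabla u|)\chi_{A(k,\tau')}+\phi\Big(x,\tfrac{(u-k)_+}{\tau-\sigma}\Big)\chi_{A(k,\tau)},
\]
and the Caccioppoli inequality \eqref{eq:caccioppoli} on the pair $(Q_{\tau'},Q_\tau)$ (for which $R-r=(\tau-\sigma)/2$) absorbs the $|\nabla u|$ term into an integral of the same type plus $|A(k,\tau)|\Lambda$. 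Since $v\equiv(u-k)_+$ on $Q_\sigma$, Hölder's inequality with conjugate exponents $(s,s')=(\tfrac{2n}{2n-1},2n)$ against $\chi_{A(k,\sigma)}$ yields
\[
\int_{Q_\sigma}\phi(x,(u-k)_+/\tau)\,dx=\int_{A(k,\sigma)}\phi(x,v/\tau)\,dx\le|A(k,\sigma)|^{1/(2n)}\Big(\int_{Q_\tau}\phi(x,v/\tau)^s\,dx\Big)^{1/s}.
\]
Chaining these with $|A(k,\sigma)|\le|A(k,\tau)|$ produces the claimed estimate. The prefactor $(\tau/(\tau-\sigma))^{q^2/p}$ is harmless since it is always $\ge 1$; it absorbs the multiplicative constants from \ainc{p} and \adec{q} that arise when comparing $\phi(x,\cdot/\tau)$ and $\phi(x,\cdot/(\tau-\sigma))$ in the various intermediate steps and from the cut-off bound $|\nabla\eta|\lesssim 1/(\tau-\sigma)$.

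The main obstacle I anticipate is the justification of applying the Sobolev--Poincaré inequality to the test function $v$ (rather than to $u$ itself) in both alternatives of Assumption~\ref{ass:std}. Under \aone{} this is immediate since \eqref{SPineq2} imposes no smallness condition on $\nabla v$. Under \aonen{} with bounded $u$, I use that $v\in L^\infty$ (inherited from $u\in L^\infty$) to invoke the version of SP proved in Theorem~\ref{thm:SP} via the equivalent constant-exponent function $\phi^-_{Q_\tau}$; the dependence of the implicit constant on $\|u\|_\infty$ is already built into the assumption and so does not affect the form of the conclusion.
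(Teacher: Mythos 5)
Your overall route (cut-off $\eta$, test function $v=(u-k)_+\eta$, Caccioppoli, zero-boundary Sobolev--Poincar\'e with $s=(2n)'$, H\"older against $\chi_{A(k,\sigma)}$) is the same one the paper uses. But there is a genuine gap in the step you flag as the ``main obstacle'' and then wave away. You write that under \aone{} the application of Theorem~\ref{thm:SP} to $v$ is ``immediate since \eqref{SPineq2} imposes no smallness condition on $\nabla v$.'' That is not correct: Theorem~\ref{thm:SP} is stated under Assumption~\ref{ass:std}, and in the \aone{} alternative that assumption requires $\rho_\phi(\nabla\cdot)\le 1$ for the function to which the inequality is applied. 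You know $\rho_\phi(\nabla u)\le 1$, but that does not give $\rho_\phi(|\nabla v|)\le 1$, because $|\nabla v|$ contains the extra term $(u-k)_+|\nabla\eta|\lesssim (u-k)_+/(\tau-\sigma)$, whose modular is not a priori controlled by that of $\nabla u$. The paper devotes a substantial block of the proof precisely to this: it first disposes of the trivial case $|A(k,\tau)|\ge\tfrac12|Q_\tau|$, then uses the $W^{1,1}$-Poincar\'e inequality together with Lemma~\ref{lem:aJensen} and $w=(u-k)_+\equiv 0$ on the large set $Q_\tau\setminus A(k,\tau)$ to derive \eqref{eq:areaCase}, concluding that $\rho_\phi(c_{\tau,\sigma}|\nabla v|)\le 1$ for a suitable scaling constant $c_{\tau,\sigma}=c\,\bigl(\tfrac{\tau-\sigma}{\tau}\bigr)^{q/p}$.

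This also shows your attribution of the prefactor $\bigl(\tfrac{\tau}{\tau-\sigma}\bigr)^{q^2/p}$ is off: it is not a loose absorption of \ainc{p}/\adec{q} constants, it is exactly $c_{\tau,\sigma}^{-q}$, the price of pulling the scaling factor $c_{\tau,\sigma}$ back out of $\phi(x,\cdot)^s$ after the Sobolev--Poincar\'e step. Without deriving this scaling you have no justification that $v$ (or any rescaling of it) is admissible in Theorem~\ref{thm:SP}, and the argument does not close. In the \aonen{}/bounded case the paper sets $c_{\tau,\sigma}=1$ and the obstacle indeed disappears, so your treatment of that alternative is fine; the missing work is entirely in the \aone{} branch.
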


\begin{proof}
We first observe that the claim is trivial if $|A(k,\tau)|\ge \frac12 |Q_\tau|$, so 
we may assume that this is not the case.  
Let $\tau':=\frac{\sigma+\tau}2$ and $\eta \in C_0^\infty(Q_{\tau'})$ 
be a cut-off function such that $0 \le\eta \le1$, $\eta = 1$ in $Q_\sigma$, and 
$|\nabla\eta| \le\frac 4{\tau-\sigma}$. Denote $v:=(u-k)_+ \eta$.

By the product rule, $|\nabla v| \le |\nabla (u-k)_+| + (u-k)_+ |\nabla \eta|$. 
Since $|\nabla \eta|\le \frac 4{\tau-\sigma}$, we obtain by \adec{} 
and the Caccioppoli inequality \eqref{eq:caccioppoli} that 
\begin{equation}\label{eq:vsmall}
\begin{split}
\int_{Q_{\tau'}} \phi(x, |\nabla v|)\,dx
&\lesssim 
\int_{Q_{\tau'}}\phi(x,|\nabla (u-k)_+|) + \phi\Big(x,\frac{(u-k)_+}{\tau-\sigma}\Big)\,dx\\
&\lesssim
\int_{ Q_\tau} \phi\Big(x,\frac{(u-k)_+}{\tau-\sigma}\Big)\,dx
+ |A(k,\tau)|\Lambda.
\end{split}
\end{equation}
As an intermediate step, we next show in the case \aone{} how this inequality implies that 
$\rho_\phi(c_{\tau,\sigma}|\nabla v|)\le 1$ for a suitable constant. 
%If $u$ is bounded (the \aonen{} case), then this is clear from the previous 
%inequality and \adec{}. 

In the case of \aone{}, we denote $w:=(u-k)_+$ and note that $w=0$ in 
$A:=Q_\tau\setminus A(k,\tau)$. 
Since $|A|>\frac 12 |Q_\tau|$, we obtain by the $W^{1,1}$-Poincar\'e inequality, 
Lemma~\ref{lem:aJensen} and $\rho_\phi(|\nabla w|)\le 1$ that 
\[
\begin{split}
|w_A - w_{Q_\tau}|
&\le  \frac{|Q_\tau|}{|A|} \fint_{Q_\tau} |w-w_{Q_\tau}| 
\, dx
\lesssim \tau \fint_{Q_\tau} |\nabla w| \, dx\\
&\lesssim \tau (\phi_{Q_\tau}^-)^{-1}\Big( \fint_{Q_\tau} \phi^-_{Q_\tau}( |\nabla w| ) \, dx \Big)
\le
\tau (\phi_{Q_\tau}^-)^{-1}\Big(\frac1{|Q_\tau|}\Big).
\end{split}
\]
Since $w = 0$ in $A$, we obtain by this, \azero{} and \aone{} that 
\[
w = |w - w_A| 
\le
|w - w_{Q_\tau}| + |w_A - w_{Q_\tau}|
\lesssim
|w - w_{Q_\tau}| + \tau(\phi_{Q_\tau}^+)^{-1}\Big(\frac1{|Q_\tau|}\Big)+\tau.
\]
By the Sobolev--Poincar\'e inequality (Theorem~\ref{thm:SP} with $s=1$), 
\adec{}, \azero{}, $\rho_\phi(|\nabla u|)\le 1$ and $|Q_r|\le 1$, we conclude that 
\begin{equation}\label{eq:areaCase}
\begin{split}
\int_{ Q_\tau} \phi\Big(x,\frac{w}{\tau}\Big)\,dx
&\lesssim
\int_{ Q_\tau} \phi\bigg(x,\frac{|w(x) - w_{Q_\tau}|}{\tau} + (\phi_{Q_\tau}^+)^{-1}\Big(\frac1{|Q_\tau|}\Big)+1\bigg)\,dx\\
&\lesssim
\int_{ Q_\tau} \phi(x,|\nabla u|)+1 +\frac1{|Q_\tau|} \,dx
\le 3. 
\end{split}
\end{equation}
Furthermore, \adec{} implies that 
\[
\begin{split}
\int_{ Q_\tau} \phi\Big(x,\frac{(u-k)_+}{\tau-\sigma}\Big)\,dx
&\lesssim
\big(\tfrac{\tau}{\tau-\sigma}\big)^q
\int_{ Q_\tau} \phi\Big(x,\frac{w}{\tau}\Big)\,dx
\lesssim
\big(\tfrac{\tau}{\tau-\sigma}\big)^q.
\end{split}
\]
By \ainc{p}, \eqref{eq:vsmall} and this imply that 
$\rho_\phi(c_{\tau,\sigma}|\nabla v|)\le 1$, where 
$c_{\tau,\sigma} := c\, (\frac{\tau-\sigma}{\tau})^{q/p}\le 1$.
We set $c_{\tau,\sigma}:=1$ for the case \aonen{}; then in 
both cases we can apply the Sobolev--Poincar\'e inequality (Theorem~\ref{thm:SP})
to the function $c_{\tau,\sigma} v$.

We now start the main line of the proof. 
By Hölder's inequality and \adec{}, we obtain  
\[
\begin{split}
\int_{Q_\sigma} \phi\Big(x,\frac{(u-k)_+}\tau\Big)\,dx
&\le
\int_{Q_{\tau'}} \phi\Big(x, \frac v\tau\Big)\, dx
\le 
|A(k,\tau)|^\frac {s-1}s\bigg(\int_{Q_{\tau'}} \phi\Big(x, \frac{v}{\tau}\Big)^s\, 
dx\bigg)^\frac1s\\
&\le c_{\tau, \sigma}^{-q} 
|A(k,\tau)|^\frac {s-1}s\bigg(\int_{Q_{\tau'}} \phi\Big(x, \frac{c_{\tau, \sigma} v}{\tau}\Big)^s\, 
dx\bigg)^\frac1s\
\end{split}
\]
for $s:=(2n)'$. Note that $s<n'$, $\rho_\phi(c_{\tau,\sigma}|\nabla v|)\le 1$ 
and that $c_{\tau,\sigma} v\in W^{1,\phi}_0(Q_{\tau'})$. 
Thus the Sobolev--Poincar\'e inequality (Theorem~\ref{thm:SP}) for the 
function $c_{\tau,\sigma} v$ yields that  
\[
|Q_\tau|^\frac{s-1}s \bigg(\int_{Q_{\tau'}} \phi\Big(x, \frac {c_{\tau,\sigma} v}\tau\Big)^s\, 
dx\bigg)^\frac1s
\le 
\int_{Q_{\tau'}} \phi(x, |\nabla v|)\, dx + |A(k,\tau)|;
\]
here we also used that $\nabla v=0$ a.e.\ outside $A(k,\tau)$ and 
$c_{\tau,\sigma}\le 1$. 
Combining the two inequalities, noting that $\frac {s-1}s=\frac1{2n}$ 
and using \adec{} for the first step, and \eqref{eq:vsmall} for the second step, 
we find that  
\[
\begin{split}
&\int_{Q_\sigma} \phi\Big(x,\frac{(u-k)_+}\tau\Big)\,dx 
\lesssim
c_{\tau,\sigma}^{-q}\bigg(\frac{|A(k,\tau)|}{|Q_\tau|}\bigg)^\frac1{2n}
\bigg(\int_{Q_{\tau'}} \phi(x, |\nabla v|)\,dx + |A(k,\tau)|\bigg)\\
&\qquad\qquad
\lesssim 
c_{\tau,\sigma}^{-q}\bigg(\frac{|A(k,\tau)|}{|Q_\tau|}\bigg)^\frac 1{2n} 
\bigg(\int_{ Q_\tau} \phi\bigg(x,\frac{(u-k)_+}{\tau-\sigma}\bigg)\,dx + |A(k,
\tau)|(\Lambda +1)\bigg). \qedhere
\end{split}
\]
\end{proof}

Compared to classical estimates, the next proposition contains an extra term $|
u_{Q_r}|$. Note that it involves the function $u$, not just $u_+$, which makes 
it more difficult to manage. 
However, we show that it can be handled in the cases needed to prove Harnack's 
inequality. 
Recall that $q>1$ is the exponent from \adec{q} in 
Assumption~\ref{ass:std}. For brevity, we will use the following 
notation for the rest of the paper 
\[
\lambda_r := (\phi^-_{Q_r})^{-1}(\Lambda +1)\, r.
\]

\begin{prop}\label{prop:esssupinf}
Let $\phi\in\Phiw(\Omega)$ and $u\in W^{1,\phix}_\loc(\Omega)$ 
satisfy Assumption~\ref{ass:std} with $(\Lambda+1)|Q_{r}| \le 1$.
 %and $\rho_{L^\phix(Q_{r})}(\nabla u)\le 1$. 
Suppose that $u$ satisfies \eqref{equ:DeGiorgi-class} and $\theta\in [\frac12 ,1)$. 
Then $u_+$ is bounded and 
\begin{equation}\label{eq:supbdd}
\esssup_{Q_{\theta r}} u_+
\lesssim  
(1-\theta)^{-4nq^2} 
\bigg( \bigg[ \fint_{Q_r} (u_+)^q\,dx\bigg]^{\frac1q} + |u_{Q_{r/2}}|\bigg)	+ \lambda_r 
\end{equation}
for any $Q_{2r}\subset \Omega$. The term $|u_{Q_r}|$ can be omitted
if $\big|\{u_+ =0\} \cap Q_r\big| \ge \frac12 |Q_r|$
or if $u$ is non-negative.
\end{prop}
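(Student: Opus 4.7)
The approach is a De Giorgi truncation argument driven by the hypothesis \eqref{equ:DeGiorgi-class}. Fix $\theta\in[\tfrac12,1)$; with parameters $k_0\ge 0$ and $d>0$ to be determined, set the usual nested sequences
\[
\rho_j := \theta r + (1-\theta) r\, 2^{-j}, \qquad k_j := k_0 + d(1 - 2^{-j}), \qquad J_j := \fint_{Q_{\rho_j}} \phi\!\left(x, \tfrac{(u-k_j)_+}{r}\right) dx,
\]
so $\rho_\infty=\theta r$ and $k_\infty = k_0+d$. Driving $J_j\to 0$ yields $u\le k_0+d$ a.e.\ on $Q_{\theta r}$, and the task then reduces to bounding $k_0+d$ by the claimed right-hand side.

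The recursion comes from \eqref{equ:DeGiorgi-class} with $\sigma=\rho_{j+1}$, $\tau=\rho_j$, $k=k_{j+1}$. The left side dominates $|Q_{\rho_{j+1}}|J_{j+1}$ since $\rho_j\le r$, and on the right $\rho_j/(\rho_j-\rho_{j+1})\le 2^{j+1}/(1-\theta)$ together with \adec{q} converts $\phi(x,\cdot/(\rho_j-\rho_{j+1}))$ into at most $(2^{j+1}/(1-\theta))^q\,\phi(x,\cdot/r)$. The density $|A(k_{j+1},\rho_j)|/|Q_{\rho_j}|$ is controlled by the Chebyshev-type bound
\[
|A(k_{j+1},\rho_j)|\,\phi^-_{Q_{\rho_j}}\!\big(\tfrac{d\,2^{-(j+1)}}{r}\big) \;\le\; J_j\,|Q_{\rho_j}|,
\]
which follows because $(u-k_j)_+\ge d\,2^{-(j+1)}$ on $A(k_{j+1},\rho_j)$. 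Imposing $d\ge\lambda_r$ and using the monotonicity $\phi^-_{Q_{\rho_j}}\ge\phi^-_{Q_r}$ together with \adec{q} yields $\phi^-_{Q_{\rho_j}}(d\,2^{-(j+1)}/r)\ge c\,2^{-(j+1)q}(\Lambda+1)$, which absorbs the $(\Lambda+1)|A(k_{j+1},\rho_j)|$ term of \eqref{equ:DeGiorgi-class} into a multiple of $J_j|Q_{\rho_j}|$ at the cost of a $2^{jq}$ factor. Collecting everything produces a recursion
\[
J_{j+1} \;\le\; C\,(1-\theta)^{-(q^2/p+q)}\, b^j\, \phi^-_{Q_r}(d/r)^{-1/(2n)}\, J_j^{1+1/(2n)},
\]
and the standard geometric iteration lemma (e.g.\ Lemma~7.1 of \cite{Giu03}) then forces $J_j\to 0$ provided $\phi^-_{Q_r}(d/r)$ is large enough relative to $J_0$ and the $(1-\theta)^{-1}$ factors.

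To close the argument I set $k_0 := |u_{Q_{r/2}}|$ in the general case, so that $(u-k_0)_+\le|u-u_{Q_{r/2}}|$ pointwise, and apply Theorem~\ref{thm:SP} on $Q_r$ with the inner average $u_{Q_{r/2}}$ (admissible since $|Q_{r/2}|=2^{-n}|Q_r|$). Under \aone{} the hypothesis $\rho_\phi(\nabla u)\le 1$ gives $J_0\lesssim 1$; under \aonen{} the bound follows from $u\in L^\infty$. Combining this with the refined splitting
\[
J_0 \;\lesssim\; \phi^+_{Q_r}(d/r)\bigg(1 + \frac1{d^q}\fint_{Q_r}(u-k_0)_+^q\,dx\bigg)
\]
(obtained by splitting the integrand at the level $d/r$ via \adec{q}) and the comparison $\phi^+_{Q_r}\lesssim\phi^-_{Q_r}+1$ in the admissible range from \azero{}+\aone{}+\adec{} (as noted after Lemma~\ref{lem:phi+t}; the \aonen{} analogue is similar), the iteration condition reduces to $d^q\gtrsim(1-\theta)^{-2n(q^2/p+q)}\fint u_+^q\,dx$. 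Since $2n(q^2/p+q)/q\le 4nq^2$, the choice
\[
d := c\,(1-\theta)^{-4nq^2}\bigg[\Big(\fint_{Q_r} u_+^q\,dx\Big)^{1/q} + |u_{Q_{r/2}}|\bigg] + \lambda_r
\]
suffices and produces the stated bound. For the exceptional cases one takes $k_0 = 0$: when $u\ge 0$, $|u_{Q_{r/2}}|\le(\fint u_+^q)^{1/q}$ by H\"older; when $|\{u_+=0\}\cap Q_r|\ge\tfrac12|Q_r|$, Theorem~\ref{thm:SP} can be applied to $u_+$ directly with the zero set playing the role of the mean.

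The principal difficulty will be the bookkeeping across three competing scales: the $(1-\theta)^{-(q^2/p+q)}$ factor from \eqref{equ:DeGiorgi-class}, the $2^{jq}$ factor from absorbing $(\Lambda+1)|A|$, and the rescaled $\phi^-_{Q_{\rho_j}}$ appearing in the Chebyshev bound. These combine smoothly thanks to the monotonicity $\phi^-_{Q_{\rho_j}}\ge\phi^-_{Q_r}$, but verifying that the comparison $\phi^+_{Q_r}(d/r)\lesssim\phi^-_{Q_r}(d/r)+1$ actually applies for the chosen $d$ requires keeping $\phi^-_{Q_r}(d/r)$ in the admissible range $[1,1/|Q_r|]$; this is where the hypotheses $(\Lambda+1)|Q_r|\le 1$ and the structure of $\lambda_r$ as an \emph{additive} correction to $d$ (rather than a multiplicative one) come together.
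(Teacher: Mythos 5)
Your De Giorgi set-up, Chebyshev estimate, and derivation of the recursion $J_{j+1}\lesssim(1-\theta)^{-(q^2/p+q)}b^j\phi^-_{Q_r}(d/r)^{-1/(2n)}J_j^{1+1/(2n)}$ match the paper's strategy, and absorbing the $(\Lambda+1)|A_j|$ term by imposing $d\ge\lambda_r$ is exactly the paper's mechanism. The gap is in the last step, where you convert the iteration condition $J_0\lesssim(1-\theta)^{2n(q^2/p+q)}\phi^-_{Q_r}(d/r)$ into the claimed bound. With your splitting at the level $d/r$ you get $J_0\lesssim\phi^+_{Q_r}(d/r)\bigl(1+d^{-q}\fint u_+^q\,dx\bigr)$; after the comparison $\phi^+_{Q_r}(d/r)\approx\phi^-_{Q_r}(d/r)$ (once $\phi^-_{Q_r}(d/r)\ge 1$), the $\phi^-_{Q_r}(d/r)$ on both sides cancels and the condition becomes $1+d^{-q}\fint u_+^q\,dx\lesssim(1-\theta)^{2n(q^2/p+q)}$. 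The left-hand side is always $\ge 1$, while the right-hand side tends to $0$ as $\theta\to 1$, so this is impossible; the reduction you state, $d^q\gtrsim(1-\theta)^{-2n(q^2/p+q)}\fint u_+^q\,dx$, drops the $1$ and is not what the splitting actually yields. (The splitting could be rescued by splitting at level $\epsilon d/r$ with $\epsilon\approx(1-\theta)^{2n(q^2/p+q)/p}$, using \ainc{p} to depress the $\phi^+_{Q_r}(\epsilon d/r)$ factor, but that is not in your write-up.)

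There is also a range issue you flag but do not resolve: the comparison $\phi^+_{Q_r}\lesssim\phi^-_{Q_r}+1$ under \aone{} is only available while $\phi^-_{Q_r}(d/r)\le 1/|Q_r|$, but your chosen $d$ carries a $(1-\theta)^{-4nq^2}$ prefactor, so $\phi^-_{Q_r}(d/r)$ can leave that range as $\theta\to 1$. The paper avoids both problems by not touching $\phi(d/r)$ at all: it first runs the iteration with $\theta=\tfrac12$ to get an a priori bound $\esssup_{Q_r}\phi^-_{Q_r}(u_+/r)\lesssim 1/|Q_r|$, which legitimizes the \emph{pointwise} comparison $\phi(x,u_+/r)\lesssim\phi^-_{Q_r}(u_+/r)+1$ on $Q_r$; it then writes $\theta_q\fint\phi^-_{Q_r}(u_+/r)\lesssim\fint\phi^-_{Q_r}(\theta_q^{1/p}u_+/r)$ using \ainc{p} to pull the large factor \emph{inside} $\phi^-_{Q_r}$ at cost $\theta_q^{1/p}$, applies the Jensen-type Lemma~\ref{lem:aJensen} to pass to the $q$-average, and finally inverts $\phi^-_{Q_r}$. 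That \ainc{p}-inside trick is the ingredient your argument is missing; it is what makes the $(1-\theta)^{-4nq^2}$ prefactor appear multiplicatively on the $q$-average rather than leaving an irreducible additive $1$.
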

\begin{proof}
%Let $\theta r \leq\sigma < \tau \le r$. 
%By Lemma~\ref{HKLMP-lemma_4.3re}, 
%\[
%\fint_{Q_\sigma} \phi\Big(x,\frac{(u-k)_+}{\tau}\Big)\,dx 
%\lesssim 
%\Big(\frac{\tau}{\tau-\sigma}\Big)^\frac{q^2}p
%\bigg(\frac{|A(k,\tau)|}{|Q_\tau|}\bigg)^\frac1{2n} 
%\bigg(\fint_{ Q_\tau} \phi\Big(x,\frac{(u-k)_+}{\tau-\sigma}\Big)\,dx 
%+ \frac{|A(k,\tau)|}{|Q_\tau|}(\Lambda +1) \bigg).
%\]
For $k>0$ to be chosen and any natural number $j$, we set 
\[
\alpha:=\frac1{2n}, \quad
k_j := r k\Big(1 - \frac{1}{2^{j}}\Big), \quad
\sigma_j := r \Big(\theta + \frac{1-\theta}{2^{j}}\Big),\quad
A_j := A(k_{j+1},\sigma_j),
\]
\[ 
Q_j := Q_{\sigma_j}
\quad\text{and}\quad
Y_j := \fint_{Q_j} \phi \Big(x,\frac{(u-k_j)_+}{r}\Big) \,dx.
\]
Note that $\sigma_j - \sigma_{j+1} = \frac{r(1-\theta)}{2^{j+1}}$. 
Using \eqref{equ:DeGiorgi-class} with $k=k_{j+1}$, $\sigma=\sigma_{j+1}$ and 
$\tau=\sigma_j$ for the middle step, and \adec{} for the others, we find that
\[
\begin{split}
%\fint_{Q_{j+1}} \phi\Big(x,\frac{(u-k_{j+1})_+}{r}\Big)\,dx 
Y_{j+1}
&\lesssim
\fint_{Q_{j+1}} \phi\Big(x,\frac{(u-k_{j+1})_+}{r(\theta+(1-\theta)2^{-j-1})}\Big)\,dx \\
&\lesssim 
2^\frac{q^2j}p (1-\theta)^{-\frac{q^2}p}
\bigg(\frac{|A_j|}{|Q_j|}\bigg)^\alpha
\bigg(\fint_{ Q_j} \phi\Big(x,\frac{(u-k_{j+1})_+}{r(1-\theta) 2^{-j-1}}\Big)\,dx 
+ \frac{|A_j|}{|Q_j|}(\Lambda +1) \bigg)\\
&\lesssim
2^{q^2j} (1-\theta)^{-q^2}
\bigg(\frac{|A_j|}{|Q_j|}\bigg)^\alpha
\bigg(2^{qj}(1-\theta)^{-q} Y_j %\fint_{ Q_j} \phi\Big(x,\frac{(u-k_j)_+}{r}\Big)\,dx 
+ \frac{|A_j|}{|Q_j|}(\Lambda +1) \bigg),
\end{split}
\]
where we also used $k_j \le k_{j+1}$ in the last step.
Furthermore, we observe that 
$u-k_j \ge k_{j+1}-k_j = \frac {rk}{2^{j+1}}$ in $A_j$. It follows by \adec{}
that 
\[
\frac{|A_j|}{|Q_j|} 
\le
\fint_{Q_j}\frac1{\phi(x,k)} \phi \Big(x,2^{j+1} \frac{(u-k_j)_+}{r}\Big) \,dx
\lesssim
2^{qj} \phi_{Q_r}^-(k)^{-1} Y_j. 
\]
Now our inequality implies that 
\[
Y_{j+1}
\le c 2^{q^2j} (1-\theta)^{-q^2} (2^{qj} \phi_{Q_r}^-(k)^{-1} Y_j)^\alpha 
\big[2^{qj} (1-\theta)^{-q} Y_j + 2^{qj} \phi_{Q_r}^-(k)^{-1} Y_j(\Lambda+1)\big].
\]
We will choose $k$ such that $\phi_{Q_r}^-(k)^{-1} (\Lambda + 1)\le 
1$. Then the inequality 
implies that 
\[
Y_{j+1}
\le c_1 2^{3q^2j} (1-\theta)^{-2q^2} \phi_{Q_r}^-(k)^{-\alpha} Y_j^{1+\alpha}.
\]

By the well-known iteration lemma \cite[Lemma 7.1, p.~220]{Giu03} if follows that 
$Y_j\to 0 $ as $j \to \infty$, 
provided that $Y_0\le c_1 ^{-1/\alpha} 2^{-3q^2/\alpha^2} 
(1-\theta)^{2q^2/\alpha} \phi_{Q_r}^-(k)
$. 
Thus we need to ensure that 
\[
Y_0 = \fint_{Q_r} \phi \Big(x,\frac{u_+}{r}\Big) \,dx 
\le 
c(\alpha,q) (1-\theta)^{4nq^2}\phi_{Q}^-(k),
\]
which holds under the choice 
\[
\phi_{Q_r}^-(k) =
\frac {\theta_q} {c(\alpha,q)} 
\fint_{Q_r} \phi \Big(x,\frac{u_+}{r}\Big) \,dx + \Lambda 
+ 1,
\quad \theta_q := (1-\theta)^{-4nq^2};
\]
such $k$ exists due to the \adec{} assumption. 
The latter terms are added to ensure that $\phi_{Q_r}^-(k)^{-1} (\Lambda + 1)\le 1$,
as required above. 

Since $k_j\to rk$ and $\sigma_j\to \theta r$ as $j\to\infty$, 
it follows by Fatou's lemma that 
\[
\fint_{Q_{\theta r}} \phi \Big(x,\frac{(u-rk)_+}{r}\Big) \,dx 
\le 
\liminf_{j\to \infty} Y_j = 0. 
\]
This implies that $u\le rk$ a.e.\ in $Q_{\theta r}$. Thus $u$ is locally bounded and 
\begin{equation}\label{eq:sup}
\begin{split}
\esssup_{Q_{\theta r}}  \phi^{-}_{Q_r} \Big(\frac{u_+}r\Big) 
&\le \phi^{-}_{Q_r} (k) 
= 
\frac  {\theta_q} {c(\alpha,q)} \fint_{Q_r} \phi \Big(x,\frac{u_+}{r}\Big) \,dx + 
\Lambda + 1.
\end{split}
\end{equation}

Assume first that $u_{Q_{r/2}}=0$. In the case \aone{}, we use \eqref{eq:sup} 
in the cubes $Q_{r}$ and $Q_{2r}$ (in which case there is no dependence 
on $\theta$ in the constant), the Sobolev--Poincar\'e inequality (Theorem~\ref{thm:SP}) 
with $s=1$ and $u_{Q_{r/2}}=0$, and \adec{} to conclude that
%\[
%\begin{split}
%\esssup_{Q_{\theta r}}\frac{u_+}{r}
%&\lesssim 
%(\phi^{-}_{Q_r})^{-1} \bigg(\fint_{Q_{2\theta r}} \phi\Big(x, \frac{u_+}{r}\Big)\,dx + 
%\Lambda +1\bigg) \\
%&\lesssim 
%(\phi^{-}_{Q_{r}})^{-1} 
%\bigg(\fint_{Q_{2\theta r}} \phi(x, |\nabla u|)\,dx + \Lambda +1\bigg)
%\lesssim 
%(\phi^{-}_{Q_{r}})^{-1} 
%\Big(\frac1{|Q_{r}|} \Big),
%\end{split}
%\]
\[
\begin{split}
\esssup_{Q_{\theta r}} \phi^{-}_{Q_r}\Big(\frac{u_+}{r}\Big)
\lesssim 
\esssup_{Q_{r}} \phi^{-}_{Q_r}\Big(\frac{u_+}{r}\Big)
&\lesssim 
\fint_{Q_{2r}} \phi\Big(x, \frac{u_+}{r}\Big)\,dx + 
\Lambda +1 \\
&\lesssim 
\fint_{Q_{2r}} \phi(x, |\nabla u|)\,dx + \Lambda +1
\lesssim 
\frac1{|Q_{r}|},
\end{split}
\]
where in the last step we use $(\Lambda +1)|Q_{r}| \le 1$.
Instead of $u_{Q_{r/2}}=0$ we could assume 
$|\{u_+ =0\} \cap Q_r| \ge \frac12 |Q_r|$
since then \eqref{eq:areaCase} implies that 
\begin{equation*}
\begin{split}
\fint_{Q_{2 r}} \phi\Big(x, \frac{u_+}{r}\Big)\,dx 
&\lesssim 
\frac 1{|Q_r|}.
\end{split}
\end{equation*}
In either case, it follows by  \adec{}, \azero{} and \aone{} that 
\[
\phi\Big(x, \frac{u_+}{r}\Big)
\lesssim 
\phi^{-}_{Q_{r}} \Big(\frac{u_+}{r}\Big) +1
\quad\text{for a.e. } x\in Q_r.
\]
In the case \aonen{}, the same inequality follows from \adec{}, \azero{} and \aonen{},
with constant depending also on $\|u\|_\infty$. Here the assumption 
$u_{Q_{r/2}}=0$ is not needed at all. 

Now we return to \eqref{eq:sup} with $\phi_{Q_r}^-$ in the integral by the estimate 
in the previous paragraph. By \ainc{p} we have
%\[
%\begin{split} 
%\esssup_{Q_{\theta r}} \frac{u_+}{r} 
%&\lesssim (\phi^{-}_{Q_r})^{-1} \bigg( \frac  {\theta_q} {c(\alpha,q)} \fint_{Q_r} \phi^-_{Q_r} \Big(\frac{u_+}{r}\Big) \,dx + 
%\Lambda + 1 \bigg)\\
%&\lesssim 
%(\phi^{-}_{Q_r})^{-1} \bigg(  \fint_{Q_r} \phi^{-}_{Q_r} 
%\Big(\theta_q^\frac1p \frac{u_+}{r}\Big)\,dx + \Lambda +1\bigg).
%\end{split}
%\]
\[
\begin{split} 
\esssup_{Q_{\theta r}} \phi^{-}_{Q_r}\Big(\frac{u_+}{r}\Big) 
&\lesssim 
\frac  {\theta_q} {c(\alpha,q)} \fint_{Q_r} \phi^-_{Q_r} \Big(\frac{u_+}{r}\Big) \,dx + 
\Lambda + 1 
\lesssim 
\fint_{Q_r} \phi^{-}_{Q_r} \Big(\theta_q^\frac1p \frac{u_+}{r}\Big)\,dx + \Lambda +1.
\end{split}
\]
Since $\phi_{Q_r}^-$ is a $\Phi$-prefunction that satisfies \adec{q}, we obtain by 
Lemma~\ref{lem:aJensen} and \eqref{eq:almostId} that
\[
\begin{split} 
\esssup_{Q_{\theta r}} \frac{u_+}{r} 
 \lesssim 
\theta_q^\frac1p \bigg[  \fint_{Q_r} \Big(\frac{u_+}{r}\Big)^{q}\,dx \bigg]^{\frac1q}  
+ \tfrac1r \lambda_r.
\end{split}
\] 
The claim follows for this case when we multiply the previous inequality 
by $r$. 

We have established the claim in the case $u_{Q_{r/2}}=0$. Thus, in the general 
case, 
\[
\esssup_{Q_{\theta r}} u_+-|u_{Q_{r/2}}|
\le 
\esssup_{Q_{\theta r}} (u-u_{Q_{r/2}})_+
\lesssim 
\theta_q^\frac1p \bigg[ \fint_{Q_r} (u-u_{Q_{r/2}})_+^q\,dx\bigg]^{\frac1q} + \lambda_r. 
\]
Furthermore, 
\[
\bigg[ \fint_{Q_r} (u-u_{Q_{r/2}})_+^q\,dx\bigg]^{\frac1q}
\le
\bigg[ \fint_{Q_r} (u_+ + |u_{Q_{r/2}}|)^q\,dx\bigg]^{\frac1q}
\approx 
\bigg[ \fint_{Q_r} u_+^q\,dx\bigg]^{\frac1q} + |u_{Q_{r/2}}|
\]
so we have completed the proof in the general case.
If $u$ is non-negative, then $u_+ = u$ and 
Hölder's inequality allows us to absorb the extra term in the $q$-average as follows:
\[
|u_{Q_{r/2}}| =  \fint_{Q_{r/2}} u \,dx
\lesssim
\bigg[ \fint_{Q_r} u^q\,dx\bigg]^{\frac1q}. \qedhere
\]
\end{proof}

Next we show that the exponent can be decreased arbitrarily 
close to zero when there is no extra term $|u_{Q_{r/2}}|$.

\begin{cor}\label{cor:esssup_nonnegative}
Suppose that $u\in L^\infty(Q_r)$ satisfies \eqref{eq:supbdd} without the term 
$|u_{Q_{\sigma/2}}|$
for $Q_\sigma\subset Q_r$ with $\sigma\in [\frac r2, r]$. 
Then
\[
\esssup_{Q_{r/2}} u_+
\lesssim
 \bigg( \fint_{Q_r} u_+^h\,dx\bigg)^\frac1h +  \lambda_r,%\,r,
\]
for any $h\in (0,\infty)$. 
The implicit constant depends on $h$ and the constant in \eqref{eq:supbdd}. 
\end{cor}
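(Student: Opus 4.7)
The plan is to bootstrap the hypothesis down from an $L^q$ tail on the right-hand side to an $L^h$ tail for arbitrary $h>0$, by Moser-type interpolation coupled with an absorption argument powered by the iteration lemma (Lemma~\ref{Giustin-lemma2}). The case $h\ge q$ is trivial by Hölder's inequality applied to averages, so one may assume $0<h<q$. Set $M(\sigma):=\esssup_{Q_\sigma} u_+$, which is finite by the hypothesis $u\in L^\infty(Q_r)$. Note also that for $\sigma\in [r/2, r]$, since $Q_\sigma\subset Q_r$ gives $\phi^-_{Q_\sigma}\ge \phi^-_{Q_r}$ (hence the reverse inequality on inverses), we have $\lambda_\sigma\le \lambda_r$.

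Apply the assumption at radius $\sigma\in [r/2, r]$ and dilation $\theta\in [\tfrac12,1)$, and interpolate
\[
\fint_{Q_\sigma} u_+^q\,dx
\le M(\sigma)^{q-h} \fint_{Q_\sigma} u_+^h\,dx
\le 2^n\, M(\sigma)^{q-h}\, X,
\]
where $X:=\fint_{Q_r} u_+^h\,dx$. Setting $\tau:=\theta\sigma$ so that $1-\theta=(\sigma-\tau)/\sigma\ge (\sigma-\tau)/r$, this yields
\[
M(\tau) \lesssim \Big(\tfrac{r}{\sigma-\tau}\Big)^{4nq^2} M(\sigma)^{1-h/q} X^{1/q} + \lambda_r.
\]
Next, apply Young's inequality to $M(\sigma)^{1-h/q}\cdot X^{1/q}$ with the conjugate exponents $q/(q-h)$ and $q/h$, choosing the small parameter so that the $M(\sigma)$ term has coefficient $\tfrac12$. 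This gives, for $r/2\le \tau<\sigma\le r$,
\[
M(\tau) \le \tfrac12 M(\sigma) + C\Big(\tfrac{r}{\sigma-\tau}\Big)^{4nq^3/h} X^{1/h} + \lambda_r.
\]

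This is precisely the form required by Lemma~\ref{Giustin-lemma2} on the interval $[r/2,r]$, with $Z=M$, $\theta=\tfrac12$, and
\[
\widetilde X(t) := C\, r^{4nq^3/h}\, t^{4nq^3/h}\, X^{1/h} + \lambda_r,
\]
which is a monomial plus constant and hence satisfies \adec{} on $[0,\infty)$. The lemma delivers $M(r/2)\lesssim \widetilde X(2/r) \lesssim X^{1/h}+\lambda_r$, which is the claimed bound. The main technical point is the absorption step: it is essential that $M(\sigma)$ is finite (so that it can legally be moved to the left-hand side), and this is exactly the role of the standing hypothesis $u\in L^\infty(Q_r)$; all other steps are routine interpolation, powers of $(1-\theta)$, and standard iteration.
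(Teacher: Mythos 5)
Your proof is correct and follows essentially the same route as the paper: interpolate the $L^q$-average through $L^\infty$ and $L^h$, apply Young's inequality with exponents $q/h$ and $q/(q-h)$, and absorb via Lemma~\ref{Giustin-lemma2}. The only cosmetic difference is that you tune the Young parameter to force the coefficient $\tfrac12$, whereas the paper leaves it as $(q-h)/q$; both are in $[0,1)$ and feed the iteration lemma equally well.
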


\begin{proof}
The case $h\ge q$ follows directly by H\"older's inequality, 
so we consider only $h\in (0,q)$. 
Let $\frac r2\le \sigma<\tau\le r$ and denote $Z(\sigma):= \esssup_{Q_{\sigma}} u$. 
By \eqref{eq:supbdd},
\[
\begin{split}
Z(\sigma)
&\lesssim
(1-\tfrac\sigma\tau)^{-4nq^2}
\bigg( \fint_{Q_\tau} u_+^{q}\,dx\bigg)^\frac1{q} + \lambda_\tau 
\le
\Big(\frac r{\tau-\sigma}\Big)^{4nq^2} 
\bigg( \fint_{Q_\tau} u_+^{q}\,dx\bigg)^\frac{1}{q} + \lambda_r.
\end{split}
\]
Since $\tau \in (\frac r2, r)$, we find that 
\[
\bigg( \fint_{Q_\tau} u_+^{q}\,dx\bigg)^\frac1{q} 
\lesssim 
\bigg( \fint_{Q_r} u_+^{h}\, Z(\tau)^{q-h}\,dx\bigg)^\frac1{q} 
\approx
\bigg( \fint_{Q_r} u_+^{h}\,dx\bigg)^\frac1{q}  Z(\tau)^\frac{q-h}q.
\]
Next we use Young's inequality with exponents $\frac{q}h$ and 
$\frac{q}{q-h} =: \frac1\theta $ and obtain
\[
\begin{split}
Z(\sigma)
& \le c \Big(\frac r{\tau-\sigma}\Big)^{4nq^2} 
\bigg( \fint_{Q_r} u_+^{h}\,dx\bigg)^\frac1{q} Z(\tau)^\theta + c \lambda_r \\
%&\le \frac{c h}{q}  \bigg(\frac1{(\tau- \sigma)^n}  \int_{Q_r} u^{h}
%\,dx\bigg)^\frac1{h} 
%+  \theta Z(\tau) +c\lambda_r\\
&\le 
\underbrace{\frac{c h}{q}\Big(\frac r{\tau-\sigma}\Big)^\frac{4nq^3}h
\bigg(\fint_{Q_r} u_+^{h} \,dx\bigg)^\frac1{h} 
+ c \lambda_r}_{=:X(\frac1{\tau-\sigma})} 
\ +\  \theta Z(\tau) .
\end{split}
\]
Thus $Z(\sigma) \le X(\frac1{\tau-\sigma}) + \theta Z(\tau)$ 
for all $\frac r2 \le\sigma < \tau \le r$. 
Since $Z$ is bounded in $[\frac r2, r]$ and $X$ satisfies \adec{4nq^3/h}, 
Lemma~\ref{Giustin-lemma2} yields $Z(\frac r2) \lesssim X(\frac 2r)$, which is the claim. 
\end{proof}

%%%%%%%%%%%%%%%%%%%%%%%%%%%%%%%%%%%%%%%%%%%%%%
%%%%%%%%%%%%%%%%%%%%%%%%%%%%%%%%%%%%%%%%%%%%%%
%%%%%%%%%%%%%%%%%%%%%%%%%%%%%%%%%%%%%%%%%%%%%%

\section{Estimating the essential infimum}\label{sect:essinf}

Let us denote $D_l := \{u<l\}\cap Q_r$.
Suppose that $u$ is a quasiminimizer of $\mathcal F$ and $l\in \R$. Then 
$l-u$ is a quasiminimizer of 
\[
\int_\Omega G(x,u,\nabla u)\, dx
\quad\text{with}\quad 
G(x,t,z) := F(x,l-t, -z). 
\]
Furthermore, $G$ satisfies \eqref{Hcon} with the same constants 
as $F$. Thus by the Caccioppoli estimate (Lemma~\ref{lem:Caccioppoli}) and 
Lemma~\ref{HKLMP-lemma_4.3re} the function $l - u$ satisfies 
\eqref{equ:DeGiorgi-class}.
Furthermore, the assumption in the next lemma implies that
\[
\big|\{(l-u)_+ =0\} \cap Q_r\big| = |Q_r \setminus D_l| \ge
\big(1 -\tfrac 1{2^qc_1^q}\big)  |Q_r| \ge \tfrac12 |Q_r|,
\] 
so one of the conditions in Proposition~\ref{prop:esssupinf} for omitting the 
term $|u_{Q_r}|$ is satisfied. Thus the implication of the next lemma holds 
in particular for local quasiminimizers. 

\begin{lem}\label{kappa0-lemma}
Let $u\in W^{1,\phix}_\loc(\Omega)$ be non-negative and $l>0$. 
If $l-u$ satisfies \eqref{eq:supbdd} for $\theta=\frac12$ without the term 
$|u_{Q_{r/2}}|$ 
with constant $c_1$, then
\[
|D_l| \le\tfrac 1{2^qc_1^q} |Q_r|
\quad\Rightarrow\quad
\essinf_{Q_{r/2}} u + c_1 \lambda_r \ge \tfrac{l}
{2}.
\]
\end{lem}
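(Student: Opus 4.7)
The plan is to apply the sup estimate \eqref{eq:supbdd} (without the $|u_{Q_{r/2}}|$ term) to $w := l - u$ on the cube $Q_r$ with $\theta = \tfrac12$. The whole point of the hypothesis $|D_l| \le (2c_1)^{-q}|Q_r|$ is that the superlevel set $\{w > 0\} = D_l$ is a small fraction of $Q_r$, and therefore the $L^q$-average of $w_+$ appearing on the right of \eqref{eq:supbdd} can be controlled just by the trivial pointwise bound $w_+\le l$ times the volume fraction. Absorbing the harmless factor $(1-\theta)^{-4nq^2} = 2^{4nq^2}$ into the constant, the hypothesis reads
\[
\esssup_{Q_{r/2}} (l-u)_+ \le c_1 \left(\fint_{Q_r} (l-u)_+^q\,dx\right)^{\!1/q} + c_1 \lambda_r.
\]

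Next, I would use non-negativity of $u$ and positivity of $l$ to write $(l-u)_+ \le l\,\mathbf{1}_{D_l}$ on $Q_r$, which yields
\[
\left(\fint_{Q_r} (l-u)_+^q\,dx\right)^{\!1/q} \le l\left(\frac{|D_l|}{|Q_r|}\right)^{\!1/q} \le \frac{l}{2 c_1},
\]
by the assumed upper bound on $|D_l|$. Plugging this back produces $\esssup_{Q_{r/2}} (l-u)_+ \le \tfrac{l}{2} + c_1 \lambda_r$, and since $l - u \le (l-u)_+$ pointwise a.e., rearranging gives exactly $\essinf_{Q_{r/2}} u + c_1 \lambda_r \ge \tfrac{l}{2}$.

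There is no real obstacle here; the argument is a direct instantiation of the already-proved sup bound. The only thing to keep an eye on is the calibration of the threshold: the constant $\tfrac{1}{2^q c_1^q}$ in the smallness hypothesis on $|D_l|$ is chosen precisely so that the $c_1$ produced by \eqref{eq:supbdd} cancels against $\tfrac{1}{2c_1}$ from the volume bound to yield the clean factor $\tfrac12$. This also explains why the same constant $c_1$ appears in the threshold, in the main conclusion multiplying $\lambda_r$, and in \eqref{eq:supbdd} — all three occurrences must be the \emph{same} constant for the arithmetic to close.
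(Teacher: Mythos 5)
Your proof is correct and takes essentially the same route as the paper: instantiate \eqref{eq:supbdd} for $l-u$ with $\theta=\tfrac12$, bound $(l-u)_+\le l\,\mathbf{1}_{D_l}$ so that the $L^q$-average is controlled by $l\,(|D_l|/|Q_r|)^{1/q}\le l/(2c_1)$, and rearrange using $\esssup_{Q_{r/2}}(l-u)=l-\essinf_{Q_{r/2}}u$. The only cosmetic difference is that you discuss absorbing the factor $(1-\theta)^{-4nq^2}$ into $c_1$, whereas the paper treats $c_1$ as already being that full coefficient; the arithmetic and the observation that the same $c_1$ must appear in all three places match the paper exactly.
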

\begin{proof}
Inequality \eqref{eq:supbdd} for the function $l-u$ yields that
\[
\begin{split}
\esssup_{Q_{r/2}} (l- u) &\le \esssup_{Q_{r/2}} (l- u)_+
\le  
c_1 \bigg[ \fint_{Q_r} (l- u)_+^q\,dx\bigg]^{\frac1q} 
+ c_1\lambda_r\\
&\le 
 c_1 \bigg[ \frac{1}{|Q_r|}\int_{D_l} l^q\,dx\bigg]^{\frac1q} 
+ c_1\lambda_r
=  c_1 l \bigg[  \frac{|D_l|}{|Q_r|} \bigg]^{\frac1q} + 
c_1 \lambda_r\\
&\le  \tfrac12  l+ c_1 \lambda_r.
\end{split}
\]
Since $\esssup_{Q_{r/2}} (l - u) = l - \essinf_{Q_{r/2}} u$, the claim follows.
\end{proof}

The next lemma shows that the implication of the previous 
lemma holds for any constant $\kappa$. The previous lemma takes 
care of small values of $\kappa$. 

\begin{lem}\label{kappa-lemma}
Let $\phi\in\Phiw(\Omega)$ and $u\in W^{1,\phix}_\loc(\Omega)$ 
satisfy Assumption~\ref{ass:std}. 
Suppose that $u$ is a non-negative local quasiminimizer of $\mathcal{F}$.
%with $\rho_\phi(|\nabla u|)\le 1$.
Then for every $\kappa \in (0,1)$ there exists $\mu > 0$ such that 
\[
|D_l| \le\kappa\, |Q_r|
\quad\Rightarrow\quad
\essinf_{Q_{r/2}} u + c_1 \lambda_r \ge \mu l
\]
for all $Q_{2r}\subset \Omega$ and all $l>0$. Here the constant $c_1$ is from Lemma~\ref{kappa0-lemma}.
\end{lem}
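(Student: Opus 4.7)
The plan is to bootstrap Lemma \ref{kappa0-lemma}, which only handles very small $\kappa$, to arbitrary $\kappa\in(0,1)$ via a De Giorgi-type iteration on nested level sets. Set $\kappa_0:=(2c_1)^{-q}$; if $\kappa\le\kappa_0$ the conclusion already holds with $\mu=1/2$ directly from Lemma \ref{kappa0-lemma}, so assume $\kappa>\kappa_0$. I would introduce the geometric sequence $l_j:=2^{-j}l$ and $D_j:=D_{l_j}$, noting that the sets are nested, $D_{j+1}\subset D_j\subset D_0$, so $|Q_r\setminus D_j|\ge(1-\kappa)|Q_r|$ uniformly in $j$. The goal is to choose $N=N(\kappa,\kappa_0,\ldots)$ so that either $|D_N|\le\kappa_0|Q_r|$ or $l_N\le c_0\lambda_r$. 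In the first case Lemma \ref{kappa0-lemma} at level $l_N$ gives $\essinf_{Q_{r/2}}u+c_1\lambda_r\ge l_N/2=2^{-(N+1)}l$; in the second case $2^{-(N+1)}l\le c_0\lambda_r/2\le c_1\lambda_r$ after shrinking $c_0$, so the conclusion is trivial. Thus $\mu:=2^{-(N+1)}$ works in both alternatives.

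For the iteration I would fix $j<N$ with $|D_{j+1}|>\kappa_0|Q_r|$ and $l_j>c_0\lambda_r$, and form the truncation $v_j:=\min\{(l_j-u)_+,\,l_{j+1}\}$. Then $v_j$ vanishes on $Q_r\setminus D_j$ (a set of measure $\ge(1-\kappa)|Q_r|$), equals $l_{j+1}$ on $D_{j+1}$, and $|\nabla v_j|=|\nabla u|\chi_{D_j\setminus D_{j+1}}$. Since $(v_j)_{Q_r}\le\kappa l_{j+1}$, the $L^1$ Sobolev--Poincaré inequality applied to $v_j-(v_j)_{Q_r}$ yields
\[
(1-\kappa)\,l_{j+1}\,|D_{j+1}|^{(n-1)/n}\lesssim\int_{D_j\setminus D_{j+1}}|\nabla u|\,dx.
\]
I would bound the right-hand side by Hölder's inequality with exponents $p,p'$ followed by the Caccioppoli estimate (Lemma \ref{lem:Caccioppoli}) applied to $(l_j-u)_+$. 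Using $(l_j-u)_+\le l_j$, the pointwise bound $t^p\lesssim\phi(x,t)+1$ (a consequence of \ainc{p}, \azero{} and \adec{}, which give $\phi(\cdot,1)\approx 1$), and the transition between $\phi(x,\cdot)$ and $\phi^-_{Q_r}(\cdot)$ via \aone{} (or \aonen{} in the bounded case) exactly as in Proposition \ref{prop:esssupinf}, this produces
\[
\int_{D_j\cap Q_r}|\nabla u|^p\,dx\lesssim \phi^-_{Q_r}(l_j/r)\,|Q_r|+|Q_r|(\Lambda+1).
\]

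The stopping condition $l_j>c_0\lambda_r$ translates, by the definition of $\lambda_r$, into $\phi^-_{Q_r}(l_j/r)\gtrsim\Lambda+1$, which lets me absorb the additive terms into the $\phi$-term. Combining the two displays, exploiting the cancellation $l_{j+1}=l_j/2$, and using \ainc{p} together with \adec{q} to reconcile $\phi^-_{Q_r}(l_j/r)^{1/p}$ with $l_j/r$, I expect to arrive at a recursion
\[
|D_{j+1}|^{\alpha}\le C(\kappa)\,|D_j\setminus D_{j+1}|
\]
for some $\alpha>1$ and a constant that blows up as $\kappa\to 1$ but is independent of $j$. Summing for $j=0,\dots,N-1$ under the standing assumption $|D_{j+1}|\ge\kappa_0|Q_r|$ gives $N(\kappa_0|Q_r|)^\alpha\le C(\kappa)(|D_0|-|D_N|)\le C(\kappa)|Q_r|$, a contradiction once $N$ is chosen large enough in terms of $\kappa,\kappa_0$ and the structural constants.

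The hardest part is the interplay between the additive $\Lambda$ produced by Caccioppoli and the geometric decay of $l_j$: without care the ratio $\Lambda/\phi^-_{Q_r}(l_j/r)$ blows up along the iteration and the summation fails. The fix is to terminate the iteration at the scale $l_j\sim\lambda_r$, which is precisely the threshold at which $\Lambda$ becomes subordinate to $\phi^-_{Q_r}(l_j/r)$; this is exactly the reason $\lambda_r$ appears in the conclusion. A secondary technical point is the uniform switch between $\phi(x,\cdot)$ and $\phi^-_{Q_r}(\cdot)$ in the admissible range, which is handled by the same maneuver used in Proposition \ref{prop:esssupinf} via \aone{} or \aonen{} combined with \azero{} and \adec{}.
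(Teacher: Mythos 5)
Your strategy is in essence the same as the paper's: iterate the De Giorgi isoperimetric lemma on the nested level sets $D_j=D_{l2^{-j}}$, estimate $\int_{D_j\setminus D_{j+1}}|\nabla u|$ by H\"older with exponent $p$ together with the Caccioppoli inequality, sum the resulting recursion, and bootstrap Lemma~\ref{kappa0-lemma}. The ``stopping criterion'' $l_j\le c_0\lambda_r$ is a clean repackaging of what the paper does by carrying the factor $(1+2^{i_0}\lambda_r/l)^{p'}$ through the summation; both variants resolve the same tension between the additive $\Lambda$ from Caccioppoli and the decay of the levels.

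There is, however, a real gap in the step where you say you would ``use \ainc{p} together with \adec{q} to reconcile $\phi^-_{Q_r}(l_j/r)^{1/p}$ with $l_j/r$''. This does not work: \ainc{p} gives the lower bound $t^p\lesssim\phi(x,t)+1$, i.e.\ $(l_j/r)^p\lesssim\phi^-_{Q_r}(l_j/r)+1$, which is precisely the opposite of the estimate $\phi^-_{Q_r}(l_j/r)\lesssim(l_j/r)^p$ needed to cancel against $l_{j+1}=l_j/2$ on the left. Since $\phi$ is only controlled from above by $t^q$ with $q>p$, the ratio $\phi^-_{Q_r}(l_j/r)^{1/p}/(l_j/r)$ is not uniformly bounded (it can grow like $(l_j/r)^{q/p-1}$), and your recursion would acquire a $j$-dependent loss. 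The fix is to not pass to $\int|\nabla u|^p\lesssim|Q_r|\phi^-_{Q_r}(l_j/r)+\dots$ at all, but rather invoke Lemma~\ref{lem:aJensen} in the form
\[
\bigg(\fint_{Q_r}|\nabla v|^p\,dx\bigg)^{1/p}
\lesssim(\phi_{Q_r}^-)^{-1}\bigg(\fint_{Q_r}\phi(x,|\nabla v|)\,dx\bigg),
\]
keeping the inverse function on the outside, and then estimate the argument by Caccioppoli and the \aone{}/\aonen{} transition, so that the inverse-function arithmetic gives $(\phi_{Q_r}^-)^{-1}(\fint V)\lesssim(l_j+\lambda_r)/r$; under your stopping rule this is $\lesssim l_j/r$, and the desired cancellation with $l_{j+1}=l_j/2$ occurs directly. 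This is exactly what the paper does. A secondary point: your displayed recursion $|D_{j+1}|^{\alpha}\le C(\kappa)|D_j\setminus D_{j+1}|$ is not scale-invariant; carrying out the computation above yields $\big(|D_{j+1}|/|Q_r|\big)^{p'/n'}\lesssim C(\kappa)\,|D_j\setminus D_{j+1}|/|Q_r|$, and it is this normalized version that, when summed, gives a bound on $N$ independent of $r$.
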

\begin{proof}
If $l> \|u\|_\infty$, then $|D_l|=|Q_r|$, so there is nothing to 
prove. 
Therefore, we assume that $l\le \|u\|_\infty$.
Abbreviate $Q:=Q_r$ and set, for $0 < h < k < l$, 
\[
v := 
\begin{cases} 
0, & \textrm{ if } u \geq k, \\ 
k-u, & \textrm{ if } h < u < k, \\  
k-h, & \textrm{ if } u \le h.
\end{cases}
\]
Then $v \in W^{1,\phix}_\textrm{loc}(\Omega)$ and $|\nabla v| = |\nabla u| 
\chi_{\{h < u < k\}}$ 
a.e.\ in $\Omega$.

Clearly, $v=0$ in $Q \setminus D_l$, and since $|D_l| \le\kappa|Q|$, 
we have $|Q\setminus D_l| \ge (1-\kappa)|Q|$.  
Under these circumstances, \cite[Theorem~3.16, p.~102]{Giu03} tells us that 
\[
\Big( \int_{Q} v^{n'}\,dx \Big)^\frac1{n'} 
\le
C(n,\kappa) \int_\Delta |\nabla v|\,dx
\]
for $v\in W^{1,1}(Q)$ and $\Delta := D_k\setminus D_h$. 
By Hölder's inequality, 
\[
(k-h) |D_h|^\frac1{n'} = |D_h|^{-\frac1{n}} \int_{D_h} v\,dx 
\le 
\Big( \int_{Q} v^{n'}\,dx \Big)^\frac1{n'}
\lesssim 
|\Delta| \fint_\Delta |\nabla v|\,dx. 
\]
Denote $V(x):=\phi(x,|\nabla v(x)|)$. By H\"older's inequality and 
Lemma~\ref{lem:aJensen}, 
\[
\fint_\Delta |\nabla v|\,dx  
\le 
\Big(\frac{|Q|}{|\Delta|}\Big)^{\frac1{p}}\bigg(\fint_Q |\nabla v|^p\,dx\bigg)^\frac1p  
%\le (\phi_Q^-)^{-1}\bigg(\fint_\Delta \phi_Q^-(|\nabla v|)\, dx  +1\bigg)
\lesssim
\Big(\frac{|Q|}{|\Delta|}\Big)^{\frac1{p}}(\phi_Q^-)^{-1}\bigg(\fint_Q V \, dx  \bigg). 
\]

The Caccioppoli estimate (Lemma~\ref{lem:Caccioppoli}) 
for the function $k-u$ implies that 
\begin{equation}\label{eq:Vestimate}
\begin{split}
\fint_Q V\, dx 
&=
\fint_Q \phi(x,|\nabla (k-u)_+|)\, dx
\lesssim
\fint_{Q'} \phi\Big(x,\frac {(k-u)_+}{r}\Big)\,dx + \Lambda
\lesssim
\phi_{Q'}^+\Big(\frac kr\Big)+ \Lambda, 
\end{split}
\end{equation}
where $Q':=Q_{2r}$. 
In the case \aone{}, we use the second expression and the assumption 
$\rho_\phi(|\nabla u|)\le 1$ to conclude that 
$\fint_Q V\, dx \lesssim \frac{1}{|Q|}$. 
It then follows from \aone{}, \azero{} and \adec{} that 
\[
(\phi_Q^-)^{-1}\bigg(\fint_Q V\, dx  \bigg)
\le
(\phi_{Q'}^-)^{-1}\bigg(\fint_Q V\, dx  \bigg)
\lesssim 
(\phi_{Q'}^+)^{-1}\bigg(\fint_Q V\, dx \bigg)+1.
\]
In the case of \aonen{}, we use the last expression of \eqref{eq:Vestimate},
$k\in (0, \|u\|_\infty)$, \azero{} and \adec{} to conclude that 
\[
\fint_Q V\, dx 
\lesssim \phi_{Q'}^+(\tfrac{k}r)+\Lambda
\lesssim \phi_{Q'}^-(\tfrac{k}r)+\Lambda+1
\le
\phi_Q^-(\tfrac{k}r)+\Lambda+1,
\]
where the constant depends on $\|u\|_\infty$.
In either case, we obtain that 
\[
(\phi_Q^-)^{-1}\bigg(\fint_Q V\, dx \bigg)
\lesssim 
\tfrac kr + (\phi_Q^-)^{-1}(\Lambda+1)+1 
\approx
\tfrac 1r (k+\lambda_r),
\]
where we also used \azero{} and \adec{} to absorb the $1$ in $\lambda_r$. 

Combining the previous inequalities, we find that 
\[
\begin{split}
(k-h) |D_h|^\frac{1}{n'} 
&\lesssim
|\Delta| \Big(\frac{|Q|}{|\Delta|}\Big)^{\frac1{p}}\tfrac 1r (k+\lambda_r)
\approx 
|\Delta|^{1-\frac1{p}} r^{\frac n{p}-1} (k+\lambda_r).
\end{split}
\]
We divide the previous inequality by $k$, raise it to the power 
$p'$ 
and substitute $k := l2^{-i}$ and $h := l 2^{-i-1}$, $i\in \N$:
\[
\Big(\frac{l2^{-i}-l 2^{-i-1}}{l2^{-i}}\Big)^{p'} |D_{l 2^{-i-1}}|^\frac{p'}{n'}
\lesssim
\big[|D_{l 2^{-i}}| - |D_{l 2^{-i-1}}|\big] r^{\frac{n-p}{p} p'}
(1+2^i \tfrac1l \lambda_r)^{p'}.
\]

Set $d_i := |D_{l2^{-i}}|$ for $i=0,\ldots, i_0$  and 
note that $\frac{l2^{-i}-l 2^{-i-1}}{l2^{-i}} = \frac12$. 
Since $d_i \geq d_{i_0} = |D_{l 2^{-i_0}}|$ for $i\le i_0$, this implies 
that 
\[
|D_{l 2^{-i_0-1}}|^\frac{p'}{n'}
\lesssim
[d_i - d_{i+1}] r^{\frac{n-p}{p-1}}\big(1+2^{i_0} \tfrac1l \lambda_r\big)^{p'}.
\]
Adding these inequalities for $i$ from $0$ to $i_0-1$, we get
\[
i_0\, |D_{l 2^{-i_0-1}}|^\frac{p'}{n'}
\lesssim
[d_0 - d_{i_0}] r^{\frac{n-p}{p-1}} \big(1+2^{i_0} \tfrac1l \lambda_r\big)^{p'}
\lesssim
r^{n + {\frac{n-p}{p-1}}} \big(1+2^{i_0} \tfrac1l \lambda_r\big)^{p'}.
\]
Now $n + {\frac{n-p}{p-1}} = p\frac{n-1}{p-1} =p'(n-1)$. 
Hence
\[
|D_{l 2^{-i_0-1}}| 
\le
c i_0^{-\frac{n'}{p'}} r^n \big(1+2^{i_0} \tfrac1l \lambda_r\big)^{n'}
=
c_2 i_0^{-\frac{n'}{p'}} |Q| \big(1+2^{i_0} \tfrac1l \lambda_r\big)^{n'}. 
\]
We choose $i_0$ such that $c_2 i_0^{-\frac{n'}{p'}} \le \frac1{2^{q+n'}c_1^q}$ with 
$c_1$ from Lemma~\ref{kappa0-lemma}. 

We consider two cases. If $2^{i_0} \frac1l \lambda_r\le 1$, then the 
previous inequality implies that $|D_{l 2^{-i_0-1}}| \le \frac1{2^qc_1^q}|Q|$, 
in which case it follows from Lemma~\ref{kappa0-lemma} that 
$\essinf_{Q_{r/2}} u + c_1 \lambda_r \ge l 2^{-i_0-1}$, so the 
claim holds with $\mu = 2^{-i_0-1}$. 
If, on the other hand, $2^{i_0} \frac1l \lambda_r\ge 1$, then 
$\essinf_{Q_{r/2}} u + c_1 \lambda_r \ge c_1 2^{-i_0} l$, so the claim holds 
with $\mu=c_1 2^{-i_0}$.
\end{proof}

Now standard arguments yield the weak Harnack inequality, see, e.g., 
\cite[Lemma~6.3]{HarHT17}.

\begin{cor}[Weak Harnack inequality]\label{cor:essinf}
Let $\phi\in\Phiw(\Omega)$ and $u\in W^{1,\phix}_\loc(\Omega)$ 
satisfy Assumption~\ref{ass:std}. 
Suppose that $u$ 
is a non-negative local quasiminimizer of $\mathcal F$. 
Then  there exists $h>0$ such that
\[
 \bigg( \fint_{Q_r} u^h\,dx\bigg)^\frac1h
 \lesssim
 \essinf_{Q_{r/2}} u +  \lambda_r
\]
when $Q_{2r}\subset \Omega$ and $(\Lambda+1) |Q_{2r}| \le 1$.
 %and $\rho_{L^\phi(Q_{2r})}(|\nabla u|) \le 1$.
\end{cor}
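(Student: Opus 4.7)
The plan is to derive exponential decay of the distribution function of $u$ on $Q_r$ via a Krylov--Safonov covering argument applied to Lemma~\ref{kappa-lemma}, and then to conclude by the layer-cake formula, following the strategy of \cite[Lemma~6.3]{HarHT17}. Set $m := \essinf_{Q_{r/2}} u + \lambda_r$; the goal is to show $(\fint_{Q_r} u^h\,dx)^{1/h} \lesssim m$ for some small $h>0$.

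First I would set up a local version of Lemma~\ref{kappa-lemma}. For any subcube $Q_\rho(x_0)$ with $Q_{2\rho}(x_0)\subset Q_{2r}$, the same lemma yields a $\mu=\mu(\kappa)\in(0,1)$ and a constant $c_1$ such that
\[
|\{u<l\}\cap Q_\rho(x_0)|\le \kappa|Q_\rho(x_0)|
\ \Longrightarrow\
\essinf_{Q_{\rho/2}(x_0)} u + c_1\lambda_\rho \ge \mu l.
\]
Since $Q_\rho\subset Q_r$ implies $\phi^-_{Q_\rho}\ge \phi^-_{Q_r}$, we have $\lambda_\rho\le (\rho/r)\lambda_r\le \lambda_r$, so the error term does not blow up under rescaling. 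Applied on $Q_r$ itself, the contrapositive at the single level $l=m/\mu$ gives $|\{u<m/\mu\}\cap Q_r|>\kappa|Q_r|$, i.e., the upper level set $\{u\ge m/\mu\}$ occupies at most a $(1-\kappa)$-fraction of $Q_r$.

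Next I would iterate this one-step estimate via the Krylov--Safonov (or Calderón--Zygmund dyadic) covering lemma. The standard conclusion is that the level sets $E_k:=\{u\ge \mu^{-k}m\}\cap Q_r$ satisfy $|E_{k+1}|\le (1-\tilde\kappa)|E_k|$ for some $\tilde\kappa\in(0,1)$ independent of $k$, and hence $|E_k|\le (1-\tilde\kappa)^k |Q_r|$. This is where the cubical (rather than spherical) framework pays off, since the Krylov--Safonov lemma is formulated for cubes; and the estimate $(\Lambda+1)|Q_{2r}|\le 1$ ensures the lemma may be reapplied at every scale appearing in the covering.

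Finally, the layer-cake formula gives
\[
\fint_{Q_r} u^h\,dx
= \frac{h}{|Q_r|}\int_0^\infty l^{h-1}\,|\{u>l\}\cap Q_r|\,dl
\lesssim (m/\mu)^h + \sum_{k=0}^\infty (\mu^{-(k+1)}m)^h(1-\tilde\kappa)^k;
\]
choosing $h>0$ small enough that $\mu^{-h}(1-\tilde\kappa)<1$ makes the geometric series converge, yielding $(\fint_{Q_r}u^h\,dx)^{1/h}\lesssim m$, which is the claim. The main obstacle is ensuring uniform constants across the scales in the Krylov--Safonov iteration; this is precisely the role played by the fact that $\mu$, $c_1$ in Lemma~\ref{kappa-lemma} depend only on the structural parameters and by the monotonicity $\lambda_\rho\le \lambda_r$, so that the exponential decay is not degraded during the iteration.
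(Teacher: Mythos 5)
Your proposal is correct and coincides with the paper's intended argument: the paper itself only says ``standard arguments yield the weak Harnack inequality, see, e.g., [HarHT17, Lemma~6.3],'' and that reference is exactly the Krylov--Safonov covering iteration of the density-implies-positivity lemma (here Lemma~\ref{kappa-lemma}) followed by the layer-cake formula that you describe. Your observation that $\lambda_\rho\le(\rho/r)\lambda_r\le\lambda_r$ for subcubes $Q_\rho\subset Q_r$ is precisely the point that keeps the error term from degrading across scales in the iteration, and the hypothesis $(\Lambda+1)|Q_{2r}|\le 1$ together with Assumption~\ref{ass:std} ensures the lemma is applicable with uniform constants on every subcube of the covering.
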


By combining Corollaries~\ref{cor:esssup_nonnegative} for 
the non-negative function and \ref{cor:essinf}, we 
obtain Harnack's inequality under Assumption~\ref{ass:std}. 
It remains to be shown that \adec{} can be replaced by \adeci{}.

\begin{proof}[Proof of Theorem~\ref{thm:Harnack}]
Let $\phi$ be from Theorem~\ref{thm:Harnack} and
let $\psi(x,t):=\phi(x,t)+t$. Then, by Lemma~\ref{lem:phi+t}, $\psi$ belongs to 
$\Phiw(\Omega)$ and satisfies Assumption~\ref{ass:std}. In particular, we have 
$\phi \le \psi \lesssim \phi +1$.

Since $u$ is a local quasiminimizer of $\mathcal{F}$, 
it is a local quasiminimizer of $\phi+ \Lambda+1$.
Thus using Corollaries~\ref{cor:esssup_nonnegative} and \ref{cor:essinf} 
with replacing $(\phi, F, \Lambda)$ by $(\psi, \phi+\Lambda+1,\Lambda+1)$, we obtain Harnack's inequality. 
\end{proof}

\begin{remark}\label{rem:weak-quasiminimizers}
All the results in Sections~\ref{sect:quasiminimizers}--\ref{sect:essinf} hold 
also for bounded weak quasiminimizers $u$ with bound $\|u\|_\infty$. 
This follows directly from the given proofs. We use the quasimimimizing property twice, 
first in the proof of the reverse Hölder inequality, Lemma~\ref{lem:reverse}, 
for the test function $v:= u- \eta(u- u_{Q_r}) = (1-\eta) u + \eta u_{Q_r}$, 
and then in the proof of the Caccioppoli inequality, Lemma~\ref{lem:Caccioppoli}, 
for the test function $v:=u- \eta(u- k)_+$, $k \ge 0$. Thus in both cases
$\|v\|_\infty \le \|u\|_\infty$, so we have only used the weak quasiminimizing 
property. In fact, in the proofs that follow, only the latter is needed 
for weak quasiminimizers, the former is applied to the directly for 
cubical quasiminimizers. 
\end{remark}

%%%%%%%%%%%%%%%%%%%%%%%%%%%%%%%%%%%%%%%%%%%%%%
%%%%%%%%%%%%%%%%%%%%%%%%%%%%%%%%%%%%%%%%%%%%%%
%%%%%%%%%%%%%%%%%%%%%%%%%%%%%%%%%%%%%%%%%%%%%%

\section{Morrey estimates}\label{sect:morrey}

It is well known that the Harnack inequality implies the following 
oscillation decay estimate (see \cite[Theorem~8.22]{GilT77} or 
\cite[Theorem~6.6, p.~111]{HeiKM93}). 
We define the oscillation of $u$ by 
$$%\displaystyle
\osc (u, r): = \esssup_{Q_r} u - \essinf_{Q_r} u.$$

\begin{thm}[Oscillation decay estimate]\label{Thmosc}
Let $\pm u-k$ satisfy Harnack's inequality for every  $k\in \R$ and 
every $Q_\sigma\subset Q_r$ where it is non-negative. 
Then there exists $\mu\in (0,1)$ such that for all $0< \sigma < r $, 
\[
\osc (u, \sigma) 
\lesssim
\Big(\frac{\sigma}{r}\Big)^{\mu} [\osc(u,r) +\lambda_r ] .
\] 
\end{thm}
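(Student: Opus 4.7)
The plan is to convert Harnack's inequality into a dyadic oscillation decay estimate and then iterate. Write $M(\rho):=\esssup_{Q_\rho} u$ and $m(\rho):=\essinf_{Q_\rho} u$, so $\osc(u,\rho)=M(\rho)-m(\rho)$. Fix a cube $Q_{2\rho}\subset Q_r$. On $Q_{2\rho}$ the functions $u-m(2\rho)$ and $M(2\rho)-u$ are both non-negative, and each is of the form $\pm u-k$; applying Harnack's inequality to each on $Q_\rho$ yields
\begin{align*}
M(\rho)-m(2\rho) &\le C\bigl[m(\rho)-m(2\rho)+\lambda_\rho\bigr],\\
M(2\rho)-m(\rho) &\le C\bigl[M(2\rho)-M(\rho)+\lambda_\rho\bigr].
\end{align*}
Summing these two inequalities and rearranging produces the dyadic decay estimate
\[
\osc(u,\rho)\le\theta\,\osc(u,2\rho)+c\,\lambda_\rho,\qquad \theta:=\tfrac{C-1}{C+1}\in(0,1).
\]

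Next I would control the scaling of $\lambda_\rho$ in $\rho$: since $Q_\rho\subset Q_r$ gives $\phi^-_{Q_\rho}\ge\phi^-_{Q_r}$ pointwise, the left-inverses satisfy $(\phi^-_{Q_\rho})^{-1}(\Lambda+1)\le(\phi^-_{Q_r})^{-1}(\Lambda+1)$, and therefore $\lambda_\rho\le(\rho/r)\,\lambda_r$. Iterating the dyadic decay estimate along the radii $\rho_j:=r\,2^{-j}$ gives
\[
\osc(u,\rho_j)\le\theta^{j}\osc(u,r)+c\,\lambda_r\sum_{i=1}^{j}\theta^{j-i}\,2^{-i}.
\]
The sum is bounded by $C_\mu\,2^{-\mu j}$ for any $\mu\in(0,\min\{1,\log_2(1/\theta)\})$ (in the borderline case $\theta=1/2$ an extra linear factor in $j$ appears, harmlessly absorbed by shrinking $\mu$ slightly). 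Hence $\osc(u,\rho_j)\lesssim 2^{-\mu j}\bigl(\osc(u,r)+\lambda_r\bigr)$. For general $\sigma\in(0,r)$, sandwiching $\sigma$ between two consecutive dyadic radii $\rho_{j+1}\le\sigma\le\rho_j$ and using that $\sigma\mapsto\osc(u,\sigma)$ is increasing converts the geometric decay on the dyadic subsequence into the claimed power decay $(\sigma/r)^\mu$.

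The mechanics are entirely standard De Giorgi--type oscillation iteration; the only place where genuine care is needed is in the scaling of the error term $\lambda_\rho$. Fortunately, the ordering $\phi^-_{Q_\rho}\ge\phi^-_{Q_r}$ immediately delivers the linear bound $\lambda_\rho\le(\rho/r)\lambda_r$, which is precisely what makes the iteration close without any further quantitative assumption on $\phi$ and without interfering with the choice of the Hölder exponent $\mu$.
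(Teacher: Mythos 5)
Your proof is correct and is essentially the standard De Giorgi/Moser oscillation iteration that the paper invokes by citing \cite[Theorem~8.22]{GilT77} and \cite[Theorem~6.6, p.~111]{HeiKM93} rather than proving. The two Harnack applications to $u-m(2\rho)$ and $M(2\rho)-u$, the resulting contraction $\osc(\rho)\le\frac{C-1}{C+1}\osc(2\rho)+\frac{2C}{C+1}\lambda_\rho$, the observation that $\phi^-_{Q_\rho}\ge\phi^-_{Q_r}$ for concentric $Q_\rho\subset Q_r$ forces $\lambda_\rho\le(\rho/r)\lambda_r$, and the geometric summation with the borderline case $\theta=1/2$ handled by shrinking $\mu$ are all in order; the final dyadic-to-continuous passage via monotonicity of $\osc$ is also fine.
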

%\begin{proof}
%Let $0<\sigma \le r$. 
%Setting $M(\sigma):= \esssup_{Q_\sigma} u$ and $m(\sigma):= \essinf_{Q_\sigma} 
%u$, we apply Harnack's inequality to the functions $M(\sigma)-u$ and $u-
%m(\sigma)$ and obtain 
%\begin{align*}
%M(\sigma)-m(\sigma/2) &\le c\, [M(\sigma)-M(\sigma/2)] + c_1\sigma,\\
%M(\sigma/2)-m(\sigma) &\le c\, [m(\sigma/2)-m(\sigma)]+ c_1\sigma,
%\end{align*}
%where $c_1:= c (\phi_{Q_r}^{-})^{-1}(\Lambda+1).$
%Noting that $\osc(u,\sigma) = M(\sigma)-m(\sigma)$, and adding these two 
%inequalities, we obtain that 
%$$ 
%\osc(u,\sigma) + \osc(u,\sigma/2) \le c [ \osc(u,\sigma)- \osc(u,\sigma/2)] +c 
%c_1 \sigma.
%$$
%Hence, 
%$$ 
%\osc(u,\sigma/2) \le\gamma\, \osc (u,\sigma) + \frac{c c_1 \sigma}{c+1},
%$$
%where $\gamma := \frac{c-1}{c+1}.$
%By induction, we get 
%$$ 
%\osc(u,2^{-k}\sigma) 
%\le
%\gamma^k \osc(u, \sigma)+ \frac{1-\gamma^k}{1-\gamma}\cdot \frac{cc_1\sigma}
%{c+1} 
%\le
%\gamma^k \osc(u, r) + \frac{c c_1 \sigma}{1-\gamma}.
%$$
%Let $0<\rho <\sigma$ and choose $k$ such that 
%$2^{-k}\sigma \le\rho \le2^{-k+1}\sigma$. Then 
%$$ 
%\gamma^k \le\Big( \frac{\rho}{\sigma}\Big)^{-\log\gamma / \log 2} \le
%\gamma^{k-1}.
%$$
%If $\sigma:=r^{1-\nu}\rho^\nu$ for $\nu \in (0,1)$, we obtain that 
%\begin{align*}
 %\osc(u,\rho)
%& \le\osc(u,2^{-k+1}\sigma) \le\gamma^{k-1} \osc(u, r) + c c_1 r\\
 %&\le\frac1\gamma \Big( \frac{\rho}{\sigma}\Big)^{-\log\gamma / \log 2} 
%\osc(u, r) + c \lambda_r \\
 %&\leq\frac 1\gamma\, \Big( \frac{\rho}{r}\Big)^{-(1-\nu)\log\gamma / \log 2} 
%\osc(u, r) + c \lambda_r. \qedhere
%\end{align*} 
%\end{proof}

In the next theorem we could alternatively use the $p$-average on the left-hand side 
(as in earlier papers like \cite{Ok17}), 
but we use this simpler formulation since it is all we need. 

\begin{thm}[Morrey type estimate]\label{ThmMorreyV}
Let $\phi\in\Phiw(\Omega)$ and $u\in W^{1,\phix}_\loc(\Omega)$ 
satisfy Assumption~\ref{ass:std}. 
Let $u$ be a local quasiminimizer of $\mathcal{F}$.
Then for any $Q_{2r} \subset \Omega$ with $(\Lambda+1) |Q_{2r}| \le 1$,
%and $\rho_{L^\phi(Q_{2r})}(|\nabla u|) \le 1$, 
\[
\int_{Q_{\sigma}} |\nabla u|\,dx 
\lesssim
 \Big(\frac{\sigma}{r}\Big)^{n+\mu -1} \int_{Q_{r}}|\nabla u|+(\phi_{Q_r}^{-})^{-1}(\Lambda+1)  \,dx
\]
for all $0<\sigma<r$, with $\mu$ from Theorem~\ref{Thmosc}. 
\end{thm}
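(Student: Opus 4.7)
The plan is to combine the Caccioppoli inequality with the oscillation decay estimate from Theorem~\ref{Thmosc}, then translate the resulting osc-type information into a gradient average via the modular Jensen inequalities from Lemma~\ref{lem:aJensen}. First, for fixed $0<\sigma<r$, choose a constant $l$ with $|u-l|\le \osc(u,\sigma)$ a.e.\ on $Q_\sigma$ (e.g.\ the midpoint of $\esssup$ and $\essinf$), and apply Lemma~\ref{lem:Caccioppoli} to both $(u-l)_+$ and $(l-u)_+$. Adding the two inequalities and using $|\nabla u|=|\nabla (u-l)_+|+|\nabla (l-u)_+|$ a.e., one obtains
\[
\fint_{Q_{\sigma/2}}\phi(x,|\nabla u|)\,dx\lesssim \phi^+_{Q_\sigma}\!\Big(\tfrac{\osc(u,\sigma)}{\sigma}\Big)+\Lambda.
\]
Upgrading $\phi^+_{Q_\sigma}$ to $\phi^-_{Q_\sigma}$ in the appropriate range (via the interplay of \azero{}, \aone{}/\aonen{} and \adec{} recorded at the end of Section~\ref{sect:phi-functions}), and inverting through the first half of Lemma~\ref{lem:aJensen} (with exponent $p\ge 1$ from \ainc{p}) yields
\[
\fint_{Q_{\sigma/2}}|\nabla u|\,dx\lesssim \frac{\osc(u,\sigma)}{\sigma}+(\phi^-_{Q_r})^{-1}(\Lambda+1).
\]

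Next, I would plug in the oscillation decay $\osc(u,\sigma)\lesssim (\sigma/r)^\mu[\osc(u,r)+\lambda_r]$ from Theorem~\ref{Thmosc}; dividing by $\sigma$ produces the factor $(\sigma/r)^{\mu-1}$ on the right-hand side. Multiplying by $|Q_{\sigma/2}|$ and enlarging the left-hand integration domain from $Q_{\sigma/2}$ to $Q_\sigma$ at the cost of harmless constants delivers the target prefactor $(\sigma/r)^{n+\mu-1}$, leaving an extra $(\sigma/r)^{n+\mu-1}\osc(u,r)/r$ on the right. The remaining task is to dominate $\osc(u,r)/r$ by $\fint_{Q_r}|\nabla u|\,dx+(\phi^-_{Q_r})^{-1}(\Lambda+1)$. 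For this I would apply Proposition~\ref{prop:esssupinf} to the shifted quasiminimizers $\pm(u-u_{Q_r})$, obtaining
\[
\osc(u,r/2)\lesssim \bigg(\fint_{Q_r}|u-u_{Q_r}|^q\,dx\bigg)^{1/q}+\lambda_r,
\]
and then use the modular Sobolev--Poincar\'e inequality (Theorem~\ref{thm:SP}) together with Lemma~\ref{lem:aJensen} and the higher-integrability improvement from Lemma~\ref{lem:jihoon} to bound this $L^q$-average by $r\fint_{Q_r}|\nabla u|\,dx$ plus the expected $\Lambda$-error.

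The main obstacle is this final transfer from $\osc(u,r)$ to a gradient average, because in the generalized Orlicz setting there is no direct embedding $W^{1,\phi}\hookrightarrow L^\infty$ with the desired scaling: classical Sobolev--Poincar\'e raises integrability, rather than lowering it. Bridging this gap requires combining reverse H\"older/higher integrability (Lemmas~\ref{lem:reverse} and~\ref{lem:jihoon}) with the \aone{}/\aonen{} transfer between $\phi^+$ and $\phi^-$ in order to move from $(\fint|u-u_{Q_r}|^q)^{1/q}$ down to $\fint|\nabla u|$. Throughout, one has to check carefully that the arguments of $\phi$ stay in the range $[1,1/|Q|]$ on which \aone{}/\aonen{} is effective; this is ensured by the normalization $(\Lambda+1)|Q_{2r}|\le 1$ and the two alternatives of Assumption~\ref{ass:std} ($\rho_\phi(\nabla u)\le 1$ under \aone{}, or $u\in L^\infty$ under \aonen{}).
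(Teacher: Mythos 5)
Your overall strategy matches the paper's: apply Caccioppoli at the small scale $\sigma$ to bound the gradient modular by $\phi^+(\osc(u,\sigma)/\sigma)+\Lambda$, switch from $\phi^+$ to $\phi^-$ in the admissible range, insert the oscillation decay of Theorem~\ref{Thmosc}, and finally convert $\osc(u,r)/r$ to a gradient average. You also correctly flag the true obstacle: converting $\osc(u,r)/r$ into $\fint_{Q_r}|\nabla u|\,dx$ goes the \emph{opposite} direction from Sobolev--Poincar\'e (which raises integrability), so it is not automatic.

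However, the fill you propose for that obstacle does not work as stated, and the missing ingredient is different from what you suggest. You apply Proposition~\ref{prop:esssupinf} to $\pm(u-u_{Q_r})$, which yields an $L^q$-average $\big(\fint_{Q_r}|u-u_{Q_r}|^q\,dx\big)^{1/q}$, and you then try to push this down to $r\fint_{Q_r}|\nabla u|\,dx$ via the modular Sobolev--Poincar\'e, Lemma~\ref{lem:aJensen}, and the reverse-H\"older improvement of Lemma~\ref{lem:jihoon}. This route stalls: the reverse H\"older in Lemmas~\ref{lem:reverse} and \ref{lem:jihoon} concerns the integrand $\phi(x,|\nabla u|)$, not $|\nabla u|$ itself, so it cannot lower the exponent in an $L^{q'}$-average of $|\nabla u|$ arising from Sobolev--Poincar\'e to an $L^1$-average. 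Moreover Lemma~\ref{lem:aJensen} estimates $\phi^{-1}\big(\fint\phi\big)$ from above by the $L^q$-average (not the reverse), so it does not let you bound $\big(\fint|u-u_{Q_r}|^q\big)^{1/q}$ by the modular. The point you miss is that Corollary~\ref{cor:esssup_nonnegative} exists precisely to let you take $h=1$ rather than $h=q$. Applying it to $u-u_{Q_{r/2}}$ and $u_{Q_{r/2}}-u$ with $h=1$ gives directly
\[
\frac{\osc(u,r/2)}{r}\;\lesssim\;\frac{1}{r}\fint_{Q_r}|u-u_{Q_{r/2}}|\,dx+\frac{\lambda_r}{r},
\]
and now the elementary $W^{1,1}$-Poincar\'e inequality (no Sobolev exponent, no reverse H\"older) finishes the conversion to $\fint_{Q_r}|\nabla u|\,dx+(\phi_{Q_r}^-)^{-1}(\Lambda+1)$. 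The same $h=1$ estimate, combined with $\rho_\phi(\nabla u)\le 1$ and $(\Lambda+1)|Q_{2r}|\le 1$, is also what bounds $\osc(u,2\sigma)/\sigma$ in the range $[0,(\phi^-)^{-1}(1/|Q|)]$ needed for the $\phi^+\to\phi^-$ upgrade at scale $\sigma$; in the \aonen{} case the same upgrade uses boundedness of $u$ instead. So the structural outline is right, but you should replace the $L^q$/reverse-H\"older step by Corollary~\ref{cor:esssup_nonnegative} with $h=1$ plus $W^{1,1}$-Poincar\'e.
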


\begin{proof}
It is enough to consider $\sigma \in (0, \frac{r}{4})$. 
By the Caccioppoli inequality (Lemma~\ref{lem:Caccioppoli}) with 
$k=u_{Q_{2\sigma}},r=\sigma,R=2\sigma$, we have that 
\begin{equation*}%\label{cacciest}
\begin{split}
\fint_{Q_{\sigma}}\phi(x,|\nabla (u-u_{Q_{\sigma}})_+|)\,dx 
&\lesssim
\fint_{Q_{2\sigma}} \phi\bigg(x,\frac{|u-u_{Q_{\sigma}}|}{\sigma}\bigg)\,dx +  \Lambda\\
&\le 
\fint_{Q_{2\sigma}} \phi_{Q_{2\sigma}}^+\bigg(\frac{\osc(u, 2\sigma)}{\sigma}\bigg)\,dx 
+ \Lambda 
= 
\phi_{Q_{2\sigma}}^+\bigg(\frac{\osc(u, 2\sigma)}{\sigma}\bigg) + \Lambda.
\end{split}
\end{equation*} 
Since $u$ is a quasiminimizer of $\mathcal{F}$, $-u$ is a quasiminimizer of the functional $\mathcal{F}$
with $F$ replaced by $F(x, -t, -z)$. Hence the Caccioppoli estimate for 
$-u$ similarly implies an estimate for $|\nabla (u-u_{Q_{2\sigma}})_-|$. 
Combining these two estimates we obtain
\begin{equation}\label{eq:morreyProof}
\fint_{Q_{\sigma}}\phi(x,|\nabla u|)\,dx  = \fint_{Q_{\sigma}}\phi(x,|\nabla (u-u_{Q_{\sigma}})|)\,dx 
\lesssim  
\phi_{Q_{2\sigma}}^+\bigg(\frac{\osc(u, 2\sigma)}{\sigma}\bigg) + \Lambda.
\end{equation}

In the case \aone{}, we use Corollary~\ref{cor:esssup_nonnegative} for 
$u-u_{Q_{\tau/2}}$ and $u_{Q_{\tau/2}}-u$ with $h=1$ and
the $W^{1,1}$-Poincar\'e inequality, to derive that 
\begin{equation}\label{eq:oscA1}
\begin{split}
\frac{\osc(u,\tau/2)}{\tau} 
&\le \frac{\sup_{Q_{\tau/2}} (u-u_{Q_{\tau/2}})_+}{\tau}+ 
\frac{\sup_{Q_{\tau/2}} (u_{Q_{\tau/2}}-u)_+}{\tau} \\
&\lesssim
\frac1\tau \fint_{Q_\tau} |u-u_{Q_\tau/2}|\, dx  + \frac1\tau \lambda_\tau
\lesssim
\fint_{Q_\tau} |\nabla u|\, dx  + (\phi_{Q_\tau}^-)^{-1}(\Lambda + 1).\\
%& \lesssim 
%(\phi_{Q_\tau}^-)^{-1}\bigg(\fint_{Q_\tau} \phi_{Q_\tau}^-(|\nabla u|)\, 
%dx +1\bigg)  + (\phi_{Q_\tau}^-)^{-1}(\Lambda + 1)\\ 
%&\lesssim 
%(\phi_{Q_\tau}^-)^{-1}\Big( \frac1{|Q_\tau|} \Big).
\end{split}
\end{equation}
By Lemma~\ref{lem:aJensen}, \adec{}, $\rho_{L^\phi(Q_r)}(|\nabla u|)\le 1$  
and $(1+\Lambda)|Q_r| \le 1$ it follows from this that 
\[
\frac{\osc(u,\tau/2)}{\tau} \lesssim (\phi_{Q_\tau}^-)^{-1}\Big( \frac1{|Q_\tau|} \Big)
\]
for any $0<\tau\leq r.$
We first use this estimate with $\tau = 4\sigma$. 
By \aone{}, \azero{} and \adec{}, we conclude that 
\[
\phi_{Q_{2\sigma}}^+\bigg(\frac{\osc(u, 2\sigma)}{\sigma}\bigg)
\lesssim 
\phi_{Q_{2\sigma}}^-\bigg(\frac{\osc(u, 2\sigma)}{\sigma}\bigg) + 1.
\]
In the case of bounded $u$ and \aonen{}, we obtain the same 
conclusion by \aonen{}, \azero{} and \adec{}, since
$\frac{\osc(u, 2\sigma)}{\sigma} \le\frac{2\| u \|_{L^\infty}}\sigma$.
Thus \eqref{eq:morreyProof} gives 
\[
\fint_{Q_{\sigma}}\phi(x,|\nabla u|)\,dx  
\lesssim  
\phi_{Q_{2\sigma}}^- \bigg(\frac{\osc(u, 2\sigma)}{\sigma}\bigg) + \Lambda+1.
\]

Since $u$ is a local quasiminimizer of $\mathcal F$ with $F(x, t, z)$, 
it follows that $\pm u -k$ is a local quasiminimizer of the functional 
$\mathcal F$  with $F(x, \pm (t +k), \pm z)$. Hence by Theorem~\ref{thm:Harnack} 
we can use Theorem~\ref{Thmosc}. The later theorem and \eqref{eq:oscA1} with 
$\tau = r$ yield:
\begin{align*}
\fint_{Q_{\sigma}}\phi(x,|\nabla u|)\,dx 
 & \lesssim
\phi_{Q_{2\sigma}}^{-}\bigg( \Big( \frac{2\sigma}{r/2}\Big)^{\mu-1} 
\bigg[ \frac{\osc(u, r/2)}{r} + (\phi_{Q_r}^{-})^{-1}(\Lambda+1)\bigg] \bigg) 
+ \Lambda+1\\
 & \approx
\phi_{Q_{2\sigma}}^{-}\bigg( \Big( \frac{\sigma}{r}\Big)^{\mu-1} 
\bigg[ \frac{\osc(u, r/2)}{r} + (\phi_{Q_r}^{-})^{-1}(\Lambda+1)\bigg] 
\bigg) \\
 & \lesssim
\phi_{Q_{2\sigma}}^{-}\bigg( \Big( \frac{\sigma}{r}\Big)^{\mu-1} 
\bigg[ \fint_{Q_r} |\nabla u|\, dx + (\phi_{Q_r}^{-})^{-1}(\Lambda+1)\bigg] 
\bigg),
\end{align*} 
where in the second step we use \eqref{eq:almostId}.
Since $\phi$ satisfies \ainc{1}, Lemma~\ref{lem:aJensen}  and \adec{} imply that 
\begin{align*}
\phi_{Q_{2\sigma}}^- \bigg(\fint_{Q_{\sigma}}|\nabla u|\,dx \bigg)
\lesssim
\fint_{Q_{\sigma}}\phi_{Q_{2\sigma}}^-(|\nabla u|)\,dx 
\lesssim
\fint_{Q_{\sigma}}\phi(x,|\nabla u|)\,dx.
\end{align*} 
We use this on the left-hand side of the earlier estimate 
together with \eqref{eq:almostId} to obtain the claim. 
\end{proof}

%%%%%%%%%%%%%%%%%%%%%%%%%%%%%%%%%%%%%%%%%%%%%%%%%%%%%%%%%%%%%%%%%%%%%%%%%%%%%%
%%%%%%%%%%%%%%%%%%%%%%%%%%%%%%%%%%%%%%%%%%%%%%%%%%%%%%%%%%%%%%%%%%%%%%%%%%%%%%
%%%%%%%%%%%%%%%%%%%%%%%%%%%%%%%%%%%%%%%%%%%%%%%%%%%%%%%%%%%%%%%%%%%%%%%%%%%%%%

\section{Continuity of \texorpdfstring{$\omega$}{omega}-minimizers}\label{sect:omega}

We assume now that the function $F$ satisfies 
\begin{equation*}%\label{Hcon}
\nu\, \phi(x, |z|) \le F(x, t, z) \le N\, \big(\phi(x,|z|) + \Lambda_0 
\big) 
\end{equation*}
for some constant $\Lambda_0\ge 0$. 
Denote $\psi(x,t):= \phi(x,t)+t$. 
By Lemma~\ref{lem:phi+t}, $\psi$ satisfies Assumption~\ref{ass:std}, provided $\phi$ satisfies the assumptions in Theorem~\ref{mainthmV}. Furthermore, 
$W^{1,\phi}=W^{1,\psi}$ since we  consider only bounded domains 
\cite[Corollary~3.3.11]{HarH19}. 

The following is a well known variational principle due to Ekeland; 
see \cite{Eke79} or \cite[Theorem~5.6, p.~160]{Giu03} for its proof. Recall that 
$f: X \to [-\infty, \infty]$  is \textit{lower semicontinuous} 
if $f(v)\le\liminf_{k\to \infty} f(v_k)$
for every sequence $v_k$ convergent to $v \in X$.

\begin{lem}[Ekeland's variational principle]\label{lem:ekeland}
Let $(X,d)$ be a complete metric space and $f: X \to (-\infty, \infty]$ 
be lower semicontinuous with $-\infty < \inf_{X} f < \infty$.
Suppose that
\[ 
f(u) \le\inf_{X}f + \delta
\]
for some $\delta>0$ and $u \in X$. 
Then there exists $w\in X$ with $d(u,w)\le1 $ such that 
\[
f(w)\le f(u) \ \text{ and } \ f(w) \le f(v) + \delta \, d(w, v) \ 
\text{ for all } v \in X.
\]
\end{lem}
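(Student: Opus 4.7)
The plan is to realize the desired point $w$ as a minimal element, below $u$, of the partial order $\preceq$ on $X$ defined by $v \preceq v'$ iff $f(v) + \delta\, d(v, v') \le f(v')$. Reflexivity and transitivity (by the triangle inequality) are immediate; antisymmetry follows since $v \preceq v'$ and $v' \preceq v$ together force $2\delta\, d(v, v') \le 0$. Any $\preceq$-minimal $w$ satisfying $w \preceq u$ will then automatically satisfy $f(w) \le f(u)$, as well as $d(u,w) \le \delta^{-1}(f(u) - f(w)) \le 1$ (using $f(u) \le \inf_X f + \delta$), and the sharp conclusion $f(w) \le f(v) + \delta\, d(w, v)$ for every $v \in X$. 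The non-trivial content is therefore to produce such a minimal element by a convergent recursion, so that the lower semicontinuity of $f$ can be exploited directly.

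First I would construct a decreasing chain $u = u_0 \succeq u_1 \succeq u_2 \succeq \cdots$ as follows. Setting $S_k := \{v \in X : v \preceq u_k\}$, which is nonempty (it contains $u_k$) and closed (by lsc of $f$ and continuity of the metric), I choose $u_{k+1} \in S_k$ with
\[
f(u_{k+1}) \le \tfrac{1}{2}\bigl(f(u_k) + \inf_{S_k} f\bigr).
\]
A short argument combining $u_{k+1} \preceq u_k$ with $v \preceq u_{k+1}$ and the triangle inequality shows $S_{k+1} \subseteq S_k$. The relation $u_{k+1} \preceq u_k$ gives $\delta\, d(u_k, u_{k+1}) \le f(u_k) - f(u_{k+1})$; since $f(u_k)$ is decreasing and bounded below, telescoping and the hypothesis on $u$ yield $\sum_{k \ge 0} d(u_k, u_{k+1}) \le \delta^{-1}(f(u_0) - \inf_X f) \le 1$. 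Hence $(u_k)$ is Cauchy, and by completeness it converges to some $w$ with $d(u, w) \le 1$. Each $S_k$ is closed and contains every $u_m$ with $m \ge k$, so $w \in S_k$ for every $k$; in particular $w \in S_0$ already gives $f(w) \le f(u)$.

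For the sharp minimality, I would argue by contradiction: if some $v \ne w$ satisfies $f(v) + \delta\, d(w, v) < f(w)$, then using $w \in S_k$ and the triangle inequality
\[
f(v) + \delta\, d(v, u_k) < f(w) + \delta\, d(w, u_k) \le f(u_k),
\]
so $v \in S_k$ for every $k$. The halving choice then gives $f(v) \ge \inf_{S_k} f \ge 2 f(u_{k+1}) - f(u_k)$; passing $k \to \infty$ along the convergent sequence $f(u_k) \downarrow L$ and using lsc one gets $f(v) \ge L \ge f(w)$, contradicting $f(v) < f(w)$.

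The only subtle step I anticipate is the selection rule for $u_{k+1}$: the halving bound is exactly what forces the nested slices $S_k$ to collapse to a $\preceq$-minimal point in the limit. A naive choice $u_{k+1} \in S_k$ still yields Cauchy convergence but need not give minimality, whereas the direct choice $f(u_{k+1}) = \inf_{S_k} f$ may fail without compactness. Lower semicontinuity is the other essential ingredient, used both to close the slices $S_k$ and to pass to the limit $f(w) \le L$ in the contradiction step.
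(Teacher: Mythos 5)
The paper does not prove this lemma; it cites \cite{Eke79} and \cite[Theorem~5.6, p.~160]{Giu03}. Your proof is complete and correct, and it is essentially the standard argument for Ekeland's principle found in those references: introduce the Br\o ndsted-type order $v \preceq v'$ iff $f(v) + \delta\, d(v,v') \le f(v')$, build a $\preceq$-decreasing chain with a selection rule ensuring the slices $S_k$ shrink appropriately, use completeness to extract a limit $w$, closedness (from lower semicontinuity) to keep $w$ in every $S_k$, and lower semicontinuity again in the final contradiction. All the key checks go through: $\inf_{S_k} f \ge \inf_X f > -\infty$ makes the halving choice legitimate; $u_{k+1}\preceq u_k$ gives the telescoping bound $\sum_k d(u_k,u_{k+1}) \le \delta^{-1}\big(f(u)-\inf_X f\big) \le 1$, hence both Cauchyness and $d(u,w)\le 1$; and the contradiction step correctly exploits that any $v$ with $f(v)+\delta d(w,v)<f(w)$ lies in every $S_k$, forcing $f(v)\ge \lim_k\big(2f(u_{k+1})-f(u_k)\big)=L\ge f(w)$ by lsc, against $f(v)<f(w)$. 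Your remark that the naive choice $u_{k+1}\in S_k$ would be insufficient and the exact-infimum choice unavailable is precisely the point the halving rule addresses.
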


We use Ekeland's variational principle in the space
\[
X:=\left\{v \in u + W^{1,1}_0(Q_r) :
\begin{array}{c}
\int_{Q_r} \psi(x,|\nabla v|)\,dx \le\int_{Q_r}\psi(x,|\nabla u|)\,dx\\  
\text{ and }  \ 
\| v  \|_{L^{\infty}(Q_r)} \le \| u \|_{L^{\infty}(Q_r)} \end{array} 
\right\},
\] 
with the metric 
\[
d(v_1, v_2) : = C_r \int_{Q_r} |\nabla v_1 - \nabla v_2| \,dx,
\]
where 
$C_r>0$ is a constant which will be determined later. Moreover we define
$f : X \to \R$ by  $f(v) : = \mathcal{F}(v, Q_r)$.
We first check the assumptions for Ekeland's principle. 

\begin{lemma}\label{equ:metric_space}
Let $\phi\in\Phiw(\Omega)$. 
Then $(X,d)$ is a complete metric space.
If $(t, z)\to F(x, t, z)$ is continuous for every $x$, then $f$ is 
lower semicontinuous.
\end{lemma}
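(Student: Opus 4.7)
The plan is to verify the metric axioms for $d$ first, then establish completeness by combining a $W^{1,1}$-style compactness argument with Fatou's lemma to pass the modular and $L^\infty$ constraints to the limit, and finally to obtain lower semicontinuity of $f$ via a.e.\ convergence along a subsequence together with the continuity hypothesis on $F$.

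First I would check that $d$ is a metric on $X$. Non-negativity, symmetry and the triangle inequality are inherited from the $L^1$-norm. For the identity of indiscernibles, if $d(v_1,v_2)=0$ then $\nabla(v_1-v_2)=0$ a.e.\ in $Q_r$, and since $v_1-v_2 \in W^{1,1}_0(Q_r)$, the classical Poincar\'e inequality forces $v_1=v_2$ a.e.

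For completeness, let $\{v_k\}\subset X$ be Cauchy with respect to $d$. Then $\{\nabla v_k\}$ is Cauchy in $L^1(Q_r;\R^n)$; combining this with the $W^{1,1}_0$ Poincar\'e inequality applied to $v_k-u$, the sequence $\{v_k\}$ converges to some $v$ in $W^{1,1}(Q_r)$ with $v-u\in W^{1,1}_0(Q_r)$. Passing to a subsequence (which I still denote $\{v_k\}$) I may assume $v_k\to v$ and $\nabla v_k\to \nabla v$ a.e.\ in $Q_r$. The $L^\infty$-bound transfers immediately to the a.e.-limit, giving $\|v\|_{L^\infty(Q_r)}\le \|u\|_{L^\infty(Q_r)}$. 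For the modular constraint I apply Fatou's lemma to the nonnegative integrand $\psi(x,|\nabla v_k|)$:
\[
\int_{Q_r} \psi(x,|\nabla v|)\,dx
\le \liminf_{k\to\infty} \int_{Q_r} \psi(x,|\nabla v_k|)\,dx
\le \int_{Q_r} \psi(x,|\nabla u|)\,dx,
\]
using left-continuity of $t\mapsto \psi(x,t)$ to justify the pointwise convergence of $\psi(x,|\nabla v_k(x)|)$ to $\psi(x,|\nabla v(x)|)$ a.e. Hence $v\in X$, and $v_k\to v$ in $(X,d)$.

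For lower semicontinuity of $f$, let $v_k\to v$ in $(X,d)$. I want $f(v)\le \liminf_k f(v_k)$. Choose a subsequence (not relabeled) realizing the liminf; by $d$-convergence and Poincar\'e, $v_k\to v$ in $W^{1,1}(Q_r)$, so after a further subsequence $v_k\to v$ and $\nabla v_k\to \nabla v$ a.e.\ in $Q_r$. The continuity hypothesis on $(t,z)\mapsto F(x,t,z)$ gives $F(x,v_k,\nabla v_k)\to F(x,v,\nabla v)$ a.e. Since the lower bound in \eqref{Hcon} provides $F(x,v_k,\nabla v_k)\ge \nu\,\phi(x,|\nabla v_k|)\ge 0$, Fatou's lemma yields
\[
f(v)=\int_{Q_r} F(x,v,\nabla v)\,dx
\le \liminf_{k\to\infty} \int_{Q_r} F(x,v_k,\nabla v_k)\,dx
= \liminf_{k\to\infty} f(v_k),
\]
which is the desired lower semicontinuity.

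The only subtle step is the preservation of the modular inequality defining $X$ when passing to the limit: it has to be handled by extracting an a.e.-convergent subsequence of gradients before applying Fatou, rather than trying to pass the modular to the limit directly via $L^1$-convergence. All other verifications are routine consequences of Poincar\'e's inequality and the nonnegativity of $\psi$ and $F$.
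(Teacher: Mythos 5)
Your proof follows essentially the same route as the paper: extract an a.e.-convergent subsequence of gradients, apply Fatou's lemma to transfer the modular bound and to obtain lower semicontinuity of $f$, and pass the $L^\infty$-bound to the limit. One small imprecision: left-continuity of $t\mapsto\psi(x,t)$ does \emph{not} give pointwise convergence $\psi(x,|\nabla v_k|)\to\psi(x,|\nabla v|)$ (an increasing left-continuous function can jump from the right, so $t_k\searrow t$ need not give $\psi(x,t_k)\to\psi(x,t)$); what you actually need and have is lower semicontinuity of $\psi(x,\cdot)$, i.e.\ $\psi(x,|\nabla v|)\le\liminf_k\psi(x,|\nabla v_k|)$, which is exactly what the paper invokes and what the $\liminf$-form of Fatou's lemma requires, so the estimate still goes through.
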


\begin{proof}
It is enough to prove that $(X,d)$ is a closed subspace of $(u + W^{1,1}_0(Q_r), d)$ since $(u + W^{1,1}_0(Q_r), d)$ is a complete metric space.
Let $v_k$ be a sequence in $X$ such that 
$$ \int_{Q_r} |\nabla v_k - \nabla v| \,dx \to 0 \ \textrm{ as } k \rightarrow \infty,$$
for some $v \in u + W^{1,1}_0(Q_r)$.
Then we may assume, passing to a subsequence, if necessary, that 
$v_k \to v$ and $\nabla v_k \to \nabla v$ a.e.\ in $Q_r$. 
By \cite[Lemma~2.1.6]{HarH19}, $\psi(x,\cdot)$ is lower semicontinuous. 
Therefore Fatou's lemma yields that 
\[
\begin{split}
 \int_{Q_r} \psi(x,|\nabla v|)\,dx 
&=  \int_{Q_r} \psi(x,\lim_{k\rightarrow \infty}|\nabla v_k|)\,dx \le \int_{Q_r} \liminf_{k\rightarrow \infty} \psi(x,|\nabla v_k|)\,dx\\
& \le\liminf_{k\rightarrow \infty}  \int_{Q_r} \psi(x,|\nabla v_k|)\,dx \le \int_{Q_r} \psi(x,|\nabla u|)\,dx;
\end{split}
\]
the last step holds since $v_k\in X$. 
We also see that $ \Vert v  \Vert_{L^{\infty}(Q_r)} \leq \liminf_{k\rightarrow \infty}     \Vert v_k \Vert_{L^{\infty}(Q_r)}  \leq   \Vert u \Vert_{L^{\infty}(Q_r)}.$ 
Hence $v \in X$, and so $(X,d)$ is closed.

For the same sequence we have that 
$F(x,v_k(x), \nabla v_k(x)) \to F(x,v(x), \nabla v(x))$ for a.e.\ $x\in Q_r$. 
Then lower semicontinuity follows by Fatou's lemma.
\end{proof}

Notice that a weak quasiminimizer with bound 
$\infty$ is the same thing as a local quasiminimizer. Thus we can cover 
both the bounded and unbounded case with the next lemma, where we show 
that there exists an approximating weak quasiminimizer in every cube $Q_r$. 

\begin{lemma}\label{lemcompV}
Let $\phi\in \Phiw(\Omega)$ satisfy \adeci{} and $(t, z)\to F(x, t, z)$ be continuous. 
Let $Q_{2r} \subset \Omega$ with $|Q_r| \le 1$.
Let $u$ be an $\omega$-minimizer of $\mathcal{F}$.
 %that satisfies $\rho_{L^\phi(Q_{r})}(|\nabla u|) \le 1$.
Then there exists a weak quasiminimizer $w \in u+ W_0^{1,\phi}(Q_r)$ with 
bound $\|u\|_{L^\infty(Q_r)}$ of the functional
\[
\int_{Q_r} \psi(x,|\nabla w|)+ \Lambda\,dx
\qquad \text{with}\quad 
\Lambda := \fint_{Q_r} \phi(x,|\nabla u|)\,dx + \Lambda_0 +1,
\]
satisfying the estimates
\begin{equation}\label{Du<DwV}
\fint_{Q_r} \psi(x,|\nabla w|)\, dx 
\lesssim
\Lambda,
\quad
\|w\|_{L^\infty(Q_r)} \le \|u\|_{L^\infty(Q_r)}
\quad\text{and}
\end{equation} 
\begin{equation}\label{DuDwestV}
 \fint_{Q_r} |\nabla u-\nabla w| \,dx 
\le\omega(r) (\psi_{Q_r}^-)^{-1}(\Lambda).
\end{equation} 
\end{lemma}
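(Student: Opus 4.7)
The plan is to apply Ekeland's variational principle (Lemma~\ref{lem:ekeland}) to $f:=\mathcal{F}(\cdot,Q_r)$ on the complete metric space $(X,d)$ of Lemma~\ref{equ:metric_space}, with the weighting constant
$C_r := [\omega(r)(\psi_{Q_r}^-)^{-1}(\Lambda)|Q_r|]^{-1}$ chosen so that the conclusion $d(u,w)\le 1$ is precisely the desired estimate \eqref{DuDwestV}.

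Since $u\in X$ trivially, $\inf_X f\le f(u)$, and conversely every $v\in X$ lies in $u+W^{1,1}_0(Q_r)$, so the $\omega$-minimizing property of $u$ gives $f(u)\le(1+\omega(r))f(v)$. Taking the infimum over $X$ yields $f(u)\le\inf_X f+\delta$ with $\delta:=\omega(r)\inf_X f$, and by \eqref{Hcon} we have $\delta\le N\omega(r)|Q_r|\Lambda$. Ekeland then produces $w\in X$ with $f(w)\le f(u)$, $d(u,w)\le 1$, and
\begin{equation*}
f(w)\le f(v)+\delta\, d(w,v)\qquad\text{for every }v\in X.
\end{equation*}
Membership $w\in X$ combined with Lemma~\ref{lem:phi+t}(a) delivers the two estimates in \eqref{Du<DwV}, and \eqref{DuDwestV} is $d(u,w)\le 1$ rewritten via the choice of $C_r$. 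The combination also yields the key numerical bound $\delta C_r\le N\Lambda/(\psi_{Q_r}^-)^{-1}(\Lambda)$, which will be essential below.

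For the weak quasiminimizer property of $w$, let $\Omega'\Subset Q_r$ be open and $v\in w+W^{1,1}_0(\Omega')$ with $\|v\|_\infty\le\|u\|_\infty$. I extend $v$ by $w$ outside $\Omega'$ (so $v-u\in W^{1,1}_0(Q_r)$ and the sup bound is preserved) and set $A:=\Omega'\cap\{w\ne v\}$. If $\int_{Q_r}\psi(x,|\nabla v|)\,dx>\int_{Q_r}\psi(x,|\nabla u|)\,dx$, then $v=w$ off $\Omega'$ together with $w\in X$ forces $\int_A\psi(x,|\nabla v|)\,dx>\int_A\psi(x,|\nabla w|)\,dx$, which gives the quasiminimizer inequality for $\int(\psi+\Lambda)\,dx$ with constant $1$. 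Otherwise $v\in X$, and Ekeland's inequality together with \eqref{Hcon} and the fact that $v=w$ off $A$ gives
\[
\nu\int_A\phi(x,|\nabla w|)\,dx\le N\int_A\bigl(\phi(x,|\nabla v|)+\Lambda_0\bigr)dx+\delta C_r\int_A|\nabla w-\nabla v|\,dx.
\]

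The hard part is controlling the Ekeland error term. The decisive observation is that the key bound $\delta C_r\le N\Lambda/(\psi_{Q_r}^-)^{-1}(\Lambda)$, together with the inverse-Young relation $(\phi_{Q_r}^-)^{-1}(\Lambda)\cdot((\phi_{Q_r}^-)^*)^{-1}(\Lambda)\approx\Lambda$ (and a routine case distinction according to whether $(\psi_{Q_r}^-)^{-1}(\Lambda)\approx(\phi_{Q_r}^-)^{-1}(\Lambda)$ or $(\psi_{Q_r}^-)^{-1}(\Lambda)\approx\Lambda$), yields $(\phi_{Q_r}^-)^*(\delta C_r/\eta)\lesssim\Lambda$ for each fixed $\eta>0$. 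Young's inequality for a convex representative of $\phi_{Q_r}^-$ (which exists by \ainc{p}) then delivers $\delta C_r\cdot t\le\eta\,\phi_{Q_r}^-(t)+c(\eta)\Lambda$, and \adec{q} gives $\phi(x,|\nabla w-\nabla v|)\lesssim\phi(x,|\nabla w|)+\phi(x,|\nabla v|)$. Choosing $\eta$ small enough (depending only on $\nu,N,L,p,q$), the $\phi(x,|\nabla w|)$ contribution is absorbed into the left-hand side, leaving $\int_A\phi(x,|\nabla w|)\,dx\lesssim\int_A\phi(x,|\nabla v|)\,dx+\Lambda|A|$. Finally, the triangle inequality $|\nabla w|\le|\nabla v|+|\nabla w-\nabla v|$ combined with one more Young step at parameter $1$ (whose constant is absorbed by $\Lambda\ge 1$) upgrades $\phi$ to $\psi$ on both sides, completing the verification that $w$ is a weak quasiminimizer of $\int(\psi+\Lambda)\,dx$ with bound $\|u\|_{L^\infty(Q_r)}$.
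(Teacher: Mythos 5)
Your proposal is correct and follows essentially the same strategy as the paper: Ekeland's principle on $(X,d)$ with the identical choice of $C_r$, the same $v\in X$ versus $v\notin X$ dichotomy, the same estimate $\delta\lesssim\omega(r)N|Q_r|\Lambda$, and absorption of the Ekeland error term via a Young-type inequality. The only technical divergence is that you route the Young step through a convex representative of $\phi_{Q_r}^-$ (forcing the case distinction on $(\psi_{Q_r}^-)^{-1}(\Lambda)$ and a subsequent lift from $\phi$ to $\psi$), whereas the paper works directly with a convex representative $\xi\simeq\psi_{Q_r}^-$ and the relation $\Lambda/(\psi_{Q_r}^-)^{-1}(\Lambda)\approx(\xi^*)^{-1}(\Lambda)$, which avoids the case split entirely and yields $\epsilon\,\psi_{Q_r}^-(t)+c_\epsilon\Lambda$ in one stroke.
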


\begin{proof}
Let $(X, d)$ and $f$ be as above and 
choose $C_r := [\omega(r) |Q_r| (\psi_{Q_r}^-)^{-1}(\Lambda)]^{-1}$.
For $\epsilon>0$, let $v_{\epsilon} \in X$ be such that 
$f(v_{\epsilon}) \le \inf_{X}f + \epsilon$.
Since $u$ is an $\omega$-minimizer of $\mathcal{F}$,
\begin{align*}
f(u)
&\le(1+\omega(r)) f(v_{\epsilon})\\ 
&\le
\inf_{X} f + \epsilon + \omega(r) \big(\inf_{X} f + \epsilon\big)\\
&\leq
 \inf_{X} f + \epsilon + \omega(r) \bigg(N \int_{Q_r}\phi(x, |\nabla 
u|) +\Lambda_0 \,dx + \epsilon \bigg),
\end{align*}
from which, by letting $\epsilon \to 0^+$ we obtain
\[
f(u) \le\inf_{X} f +\omega(r)N \int_{Q_r} \phi(x, |\nabla u|) + \Lambda_0\,dx 
\le 
\inf_{X} f +\omega(r) N |Q_r| \Lambda.
\]
By Ekeland's principle (Lemma~\ref{lem:ekeland}), there exists $w \in X$ with $f (w) \le f(u)$,
\[
d(u,w) = C_r \int_{Q_{r}} |\nabla u-\nabla w| \,dx \le 1
\] 
and
\[
f(w) 
\le
f(v) + C_r \omega(r) N |Q_r| \Lambda \int_{Q_r} |\nabla w-\nabla v| \,dx 
\]
for all $v \in X$. Note that the former estimate is \eqref{DuDwestV}. 
Furthermore, \eqref{Du<DwV} follows from $w\in X$ and $\psi \lesssim \phi +1$ 
used to estimate:
\[
\fint_{Q_r} \psi(x,|\nabla w|)\, dx  \le \fint_{Q_r} \psi(x,|\nabla u|)\, dx
\lesssim \fint_{Q_r} \phi(x,|\nabla u|) +1\, dx \le \Lambda.  
\] 

It remains to prove that $w$ is a weak quasiminimizer of the $\psi+\Lambda$ energy with 
bound $\|u\|_{L^\infty(Q_r)}$. Let $v \in w+ W^{1,1}_0(Q_r)$ with $\|v\|_{L^\infty(Q_r)}\le \|u\|_{L^\infty(Q_r)}$. 
Assume first that $v \notin X$. 
Since $v$ satisfies the $L^\infty$-bound by assumption, this means that 
$\rho_\psi(\nabla v)>\rho_\psi(\nabla u)$. 
By this and $w \in X$, we have
\[
\int_{Q_r} \psi(x,|\nabla w|)+ \Lambda \,dx 
\le
\int_{Q_r} \psi(x,|\nabla u|)+ \Lambda\,dx 
<
\int_{Q_r} \psi(x,|\nabla v|)+ \Lambda\,dx.
\]
We may cancel the integral over the set $\{w=v\}$, since $\nabla w=\nabla v$ 
a.e.\ in it, so we have the quasiminimizing property in this case. 

It remains to consider the case $v \in X$. 
By the structure conditions on $\mathcal F$, the estimate of $f(w)$ above, 
$\phi\le \psi$, the definition of $C_r$ and the triangle inequality, we 
conclude that 
\[
\begin{split}
\nu \int_{Q_r} \phi(x,|\nabla w|)\, dx 
&\le 
f(w)  
\le
f(v) + C_r \omega(r) N |Q_r| \Lambda \int_{Q_r} |\nabla w - \nabla v|\,dx\\
&\le
N \int_{Q_r} \psi(x,|\nabla v|) + \Lambda_0 \,dx +
\frac{N \Lambda}{(\psi_{Q_r}^-)^{-1}(\Lambda)} 
\int_{Q_r}  |\nabla w| +|\nabla v|\,dx.
%& \lesssim 
%N \int_{Q_r} \psi(x,|\nabla v|) + \Lambda_0 \,dx +
%\frac{\Lambda}{(\psi_{Q_r}^-)^{-1}(\Lambda)}.
\end{split}
\]
By \cite[Lemma~2.2.1]{HarH19}, $\psi^-_{Q_r}$ is equivalent with a 
convex $\xi\in \Phiw$. 
By \cite[Theorem~2.4.10]{HarH19}, we have $\frac{\Lambda}{\xi^{-1}(\Lambda)} 
\approx (\xi^*)^{-1}(\Lambda)$. It follows from Young's inequality, \ainc{1} and 
\eqref{eq:almostId} that 
\[
\frac{\Lambda}{(\psi_{Q_r}^-)^{-1}(\Lambda)} t \approx
(\xi^*)^{-1}(\Lambda) t 
\le
\xi(\epsilon  t) + c_\epsilon \xi^*\big( (\xi^*)^{-1}(\Lambda)\big)
\lesssim 
\epsilon \xi(t) + c_\epsilon \Lambda
\approx
\epsilon \psi_{Q_r}^-(t) + c_\epsilon \Lambda
\]
for any $\epsilon>0$. Using this for $t=|\nabla w|$ and $t=|\nabla v|$ 
as well as the estimate $ \frac1{c_1}\psi(x,|\nabla w|)-1\le \phi(x,|\nabla w|)$, 
we conclude that 
\[
\begin{split}
&\frac 1{c_1} \int_{Q_r} \psi(x,|\nabla w|)\, dx -|Q_r| \\
&\qquad\le \frac{N}{\nu} \int_{Q_r} \psi(x,|\nabla v|) + \Lambda_0 \,dx +
\frac{N\Lambda}{\nu (\psi_{Q_r}^-)^{-1}(\Lambda)} 
\int_{Q_r}  |\nabla w| +|\nabla v|\,dx\\ 
&\qquad \le c_2 \int_{Q_r} \psi(x,|\nabla v|) + \Lambda_0
 + \epsilon \psi_{Q_r}^-(|\nabla w|)
 + \epsilon \psi_{Q_r}^-(|\nabla v|) + c_\epsilon \Lambda\,dx .
\end{split}
\]
We choose $\epsilon$ so small that $c_2\epsilon\le \frac1 {2c_1}$. 
The $\nabla w$-term can be absorbed in the left-hand side and so it follows that
\[
\frac {1}{2c_1} \int_{Q_r} \psi(x,|\nabla w|)\,dx 
\lesssim
(c_2 + \tfrac {1}{2c_1})(c_\epsilon+1) \int_{Q_r} \psi(x,|\nabla v|) + \Lambda \, dx. 
\]
Hence $w$ is a weak quasiminimizer of the $\psi+ \Lambda$ energy.
\end{proof}

%%%%%%%%%%%%%%%%%%%%%%%%%%%%%%%%%%%%%%%%%%%%%%%%
%\section{Proof of Main Theorem}
Now we are ready to show that $\omega$-minimizers are locally Hölder continuous.

\begin{proof}[Proof of Theorem~\ref{mainthmV}]
Let $Q_{2r} \subset \Omega$ be such that $(\Lambda_0 +1)|Q_{2r}| \le 1$ and
$\rho_{L^\phi(Q_{2r})}(|\nabla u|) \le 1$.
Let $w \in W^{1,\phi}(Q_r)$ be the weak quasiminimizer with bound $\|u\|_{L^\infty(Q_r)}$ from Lemma~\ref{lemcompV}.

Let us first estimate $(\psi_{Q_r}^-)^{-1}(\Lambda)$  and denote $\lambda_0:=(\psi_{Q_r}^-)^{-1} ( \Lambda_0+1)$. By the definition of $\Lambda$, $\phi \le \psi$ and \adec{} we have
\[
(\psi_{Q_r}^-)^{-1}(\Lambda) 
\lesssim
 (\psi_{Q_r}^-)^{-1}\bigg(\fint_{Q_r} \psi(x,|\nabla u|)\, dx\bigg) +\lambda_0.
 %(\psi_{Q_r}^-)^{-1}(\Lambda_0+1).
\]
By $\psi \lesssim \phi +1$ and  $\rho_{L^\phi(Q_{2r})}(|\nabla u|) \le 1$  we have
$\fint_{Q_r} \psi(x,|\nabla u|)\, dx  \lesssim \frac1{|Q_r|}$,
and hence  \azero{}, \aone{}, \adec{} and \eqref{eq:almostId}   yield
\[
(\psi_{Q_r}^-)^{-1}(\Lambda) 
\lesssim
 (\psi_{Q_r}^+)^{-1}\bigg(\fint_{Q_r} \psi(x,|\nabla u|)\, dx \bigg) + \lambda_0.
\]
Since $u$ is a cubical minimizer of $\mathcal{F}$, we may use 
Lemma~\ref{lem:reverse} and thus \eqref{eq:reverse} holds.  
By Lemma~\ref{lem:jihoon}, \adec{} and \eqref{eq:almostId} we conclude that
\[
\begin{split}
(\psi_{Q_r}^-)^{-1}(\Lambda) 
&\lesssim
(\psi_{Q_r}^+)^{-1} \bigg(\psi^+_{Q_r}\Big (\fint_{Q_{r}} |\nabla u|\, dx \Big) +\Lambda_0 +1\bigg) + 
\lambda_0 \approx  \fint_{Q_{r}} |\nabla u|\, dx + \lambda_0.
\end{split}
\]

In the case of \aonen{}, we first use Lemma~\ref{lem:aJensen} with $p=1$, 
then the estimate \eqref{eq:cubicalCaccioppoli}, and finally \azero{} and 
the boundedness of $u$:
\[
\begin{split}
\psi^-_{Q_r} \bigg( \fint_{Q_r} |\nabla u| \, dx \bigg) &\le
\fint_{Q_r} \psi(x,|\nabla u|)\, dx\lesssim 
\fint_{Q_{2r}} \psi\Big(x,\tfrac {|u-u_{Q_{2r}}|}{r}\Big)\, dx + \Lambda_0\\
&\lesssim 
\psi_{Q_{2r}}^-(\tfrac1r \|u\|_{L^\infty(Q_r)}) + \Lambda_0+1
\lesssim (\Lambda_0+1) \psi_{Q_{r}}^-(\tfrac1r).
\end{split}
\]
Thus we have $\fint_{Q_r} |\nabla u| \, dx \lesssim \frac1r$ with implicit constant depending on $\Lambda_0$ 
and hence by  \aonen{}, \adec{} and \azero{} we have 
\[
\psi^+_{Q_r}\Big (\fint_{Q_{r}} |\nabla u|\, dx \Big ) \lesssim \psi^-_{Q_r}\Big (\fint_{Q_{r}} |\nabla u|\, dx\Big ) +1.
\]
Since $u$ is a cubical minimizer of $\mathcal{F}$, we obtain  
by Lemma~\ref{lem:jihoon}, \adec{}, the previous estimate and \eqref{eq:almostId} that
\[
\begin{split}
(\psi_{Q_r}^-)^{-1}(\Lambda) 
&\lesssim
 (\psi_{Q_r}^-)^{-1}\bigg(\fint_{Q_r} \psi(x,|\nabla u|)\, dx\bigg)  + \lambda_0  \\
&\lesssim
 (\psi_{Q_r}^-)^{-1}\bigg( c \psi^+_{Q_r}\Big (\fint_{Q_{r}} |\nabla u|\, dx \Big ) + \Lambda_0 +1\bigg) +\lambda_0  \lesssim  \fint_{Q_{r}} |\nabla u|\, dx + \lambda_0.
\end{split}
\]
Thus we have the same estimate for $(\psi_{Q_r}^-)^{-1}(\Lambda)$ in both cases. 

By \eqref{DuDwestV},  we obtain that 
\[
\begin{split}
\fint_{Q_r} |\nabla u-\nabla w| \,dx 
&\lesssim 
\omega(r) (\psi_{Q_r}^-)^{-1}(\Lambda) 
\lesssim 
\omega(r) \fint_{Q_{r}} |\nabla u| + \lambda_0\, dx.
\end{split}
\]
By Lemma~\ref{lem:aJensen} and \eqref{Du<DwV}, 
\[
\fint_{Q_r}|\nabla w|\,dx 
\lesssim
(\psi_{Q_r}^-)^{-1}\bigg(\fint_{Q_r}\psi(x,|\nabla w|)\,dx \bigg)
\lesssim
(\psi_{Q_r}^-)^{-1}(\Lambda)
\lesssim \fint_{Q_r} |\nabla u| + \lambda_0
\,dx.
\]
On the other hand, from the Morrey estimate (Theorem~\ref{ThmMorreyV}) and 
Remark~\ref{rem:weak-quasiminimizers}, we have, for any $0<\sigma < r$, that
\[
\int_{Q_{\sigma}}|\nabla w|\,dx
\lesssim
\Big( \frac \sigma r\Big)^{n+\mu - 1} \int_{Q_{r}} |\nabla w|+ \lambda_0 \,dx.
\]

Furthermore, since $\mu\in (0,1)$, $\int_{Q_\sigma} \lambda_0\, dx \lesssim 
(\frac \sigma r)^{n+\mu-1}\int_{Q_r} \lambda_0\, dx$. 
Combining these estimates, we find for $0<\sigma<r$, that 
\[
\begin{split}
Z(\sigma):=\int_{Q_{\sigma}}|\nabla u|+\lambda_0\,dx
& \lesssim 
\int_{Q_{\sigma}} |\nabla u-\nabla w| + |\nabla w|+\lambda_0 \,dx \\
& \lesssim
\bigg[ \omega(r) + \Big( \frac \sigma r\Big)^{n+\mu-1}\bigg]
\int_{Q_{r}} |\nabla u| + \lambda_0 \,dx.
\end{split}
\]
Set $\theta:=\frac\sigma r$. Then the previous inequality can be written as 
\[
Z(\theta r) 
\le 
c_1\, \big[\omega(r) + \theta^{n+\mu-1}\big] Z(r).
\]
We first fix $\theta$ such that $c_1\, \theta^{n+\mu-1} 
= \frac12 \theta^{n+\frac\mu2 -1}$. Then we choose $r_0$ so small that 
$c_1 \omega(r) \le \frac12\theta^{n-\frac{\mu}{2}+1}$ when $r\in [0,r_0]$. 
Then the inequality $Z(\theta r) \le \theta^{n+\frac\mu2-1}Z(r)$ 
holds for all $r\le r_0$. 
Thus it follows from \cite[Lemma 7.3, p.~229]{Giu03} that 
\begin{equation*}
\int_{Q_{\sigma}}|\nabla u| + \lambda_0\,dx 
\lesssim
\Big( \frac \sigma r\Big)^{n+\frac{\mu}{2} -1}\int_{Q_{r}}|\nabla u|+ \lambda_0 \,dx
 \end{equation*}
for all $r\le r_0$ and $\sigma \le \tau r$. This and the Poincar\'e inequality imply that 
\begin{equation*}
\sigma^{-n-\frac{\mu}{2}} \int_{Q_{\sigma}}|u-u_{Q_\sigma}| \,dx 
\lesssim
\sigma^{-n-\frac{\mu}{2}+1} \int_{Q_{\sigma}}|\nabla u| \,dx 
\lesssim 
c  
\end{equation*}
for all  cubes $Q_\sigma\subset Q_r$ with $\sigma \le \tau r$. For cubes $Q_\sigma$ with  $\sigma > \tau r$ the claim is trivial.
Thus $u$ belongs  to the Campanato space $\mathcal{L}^{1, n + \frac{\mu}{2}}(Q_r)$.
This implies by the Campanato--Hölder embedding \cite[Theorem~2.9, p.~52]{Giu03} 
that $u \in C^{0, \frac{\mu}{2}}_{\text{loc}}(\overline{Q_r})$.
\end{proof}

%%%%%%%%%%%%%%%%%%%%%%%%%%%%%%%%%%%%%%%%%%%%%%%%%%%%%%%%%%%%%%%%%%%%%%%%%%%%%%
%%%%%%%%%%%%%%%%%%%%%%%%%%%%%%%%%%%%%%%%%%%%%%%%%%%%%%%%%%%%%%%%%%%%%%%%%%%%%%
%%%%%%%%%%%%%%%%%%%%%%%%%%%%%%%%%%%%%%%%%%%%%%%%%%%%%%%%%%%%%%%%%%%%%%%%%%%%%%

\section*{Acknowledgement}

We thank Arttu Karppinen for comments and Jihoon Ok for pointing out 
some flaws in our arguments and helping solve them. We also thank the referee for comments. M. Lee was supported by the National Research Foundation of Korea (NRF) grant funded by the Korea Government (NRF-2019R1F1A1061295).

%%%%%%%%%%%%%%%%%%%%%%%%%%%%%%%%%%%%%%%%%%%%%%%%%%%%%%%%%%%%%%%%%%%%%%%%%%%%%%
%%%%%%%%%%%%%%%%%%%%%%%%%%%%%%%%%%%%%%%%%%%%%%%%%%%%%%%%%%%%%%%%%%%%%%%%%%%%%%
%%%%%%%%%%%%%%%%%%%%%%%%%%%%%%%%%%%%%%%%%%%%%%%%%%%%%%%%%%%%%%%%%%%%%%%%%%%%%%

\bibliographystyle{amsplain}

\end{document}